\documentclass{amsart}
\usepackage{amsmath}
\usepackage{amsthm}
\usepackage{amssymb}

\usepackage{diagrams}
\input xy
\xyoption{all}

\vfuzz2pt 

 \newtheorem{thm}{Theorem}[section]
 \newtheorem{cor}[thm]{Corollary}
 \newtheorem{lem}[thm]{Lemma}
 \newtheorem{prop}[thm]{Proposition}
 \theoremstyle{definition}
 \newtheorem{defn}[thm]{Definition}
 \theoremstyle{remark}
 \newtheorem{rem}[thm]{Remark}
 \theoremstyle{remark}
 \newtheorem{ex}[thm]{Example}
 \numberwithin{equation}{section}

\newcommand{\pn}{\noindent}

\newcommand{\ZZ}{\mathbb{Z}}

\newcommand{\RR}{\mathbb{R}}
\newcommand{\LL}{\mathbb{L}}

\newcommand{\PP}{\mathbb{P}}

\newcommand{\Hom}{\mathrm{Hom}}
\newcommand{\HOM}{\mathrm{HOM}}

\newcommand{\Ext}{\mathrm{Ext}}
\newcommand{\Biext}{\mathrm{Biext}}
\newcommand{\cExt}{\mathcal{E}xt}
\newcommand{\cBiext}{\mathcal{B}iext}
\newcommand{\cPicard}{\mathcal{P}icard}
\newcommand{\cTorsor}{\mathcal{T}orsor}

\newcommand{\uAut}{\underline{\mathrm{Aut}}}
\newcommand{\bBiext}{\mathrm{\mathbf{Biext}}}
\newcommand{\bPicard}{\mathrm{Picard}}

\newcommand{\Add}{\mathrm{\mathbf{Add}}}

\newcommand{\coker}{\mathrm{coker}}
\newcommand{\im}{\mathrm{im}}

\newcommand{\h}{\mathrm{H}}
\newcommand{\R}{\mathrm{R}}
\newcommand{\E}{\mathrm{E}}
\newcommand{\rL}{\mathrm{L}}

\newcommand{\Tot}{\mathrm{Tot}}

\newcommand{\cD}{\mathcal{D}}

\newcommand{\cH}{\mathcal{H}}
\newcommand{\cK}{\mathcal{K}}

\newcommand{\bS}{\textbf{S}}

\newcommand{\eic}{\mathcal{E}}

\newcommand{\pic}{\mathcal{P}}
\newcommand{\qic}{\mathcal{Q}}
\newcommand{\ric}{\mathcal{R}}
\newcommand{\gic}{\mathcal{G}}
\newcommand{\bic}{\mathcal{B}}
\newcommand{\dic}{\mathcal{D}}
\newcommand{\lic}{\mathcal{L}}
\newcommand{\kic}{\mathcal{K}}


\begin{document}

\title[biextensions and Picard stacks]
{Biextensions of Picard stacks\\ and\\ their homological interpretation}

\author{Cristiana Bertolin}

\address{Dip. di Matematica, Universit\`a di Torino, Via Carlo Alberto 10, 
I-10123 Torino}
\email{cristiana.bertolin@googlemail.com}

\subjclass{18G15, 18D05}

\keywords{Strictly commutative Picard stacks, biextensions}




\begin{abstract} 
Let {\bS} be a site. We introduce the 2-category of biextensions of strictly commutative Picard $\bS$-stacks. We define the pull-back, the push-down, and the sum of such biextensions and we compute their homological interpretation: if $\pic, \qic$ and $\gic$ are strictly commutative Picard $\bS$-stacks, the equivalence classes of biextensions of $(\pic,\qic)$ by $\gic$ are parametrized by the cohomology group $\Ext^1([\pic] {\otimes}^{\LL}[\qic] ,[\gic])$, the isomorphism classes of arrows from such a biextension to itself are parametrized by the cohomology group $\Ext^0([\pic]{\otimes}^{\LL} [\qic] ,[\gic])$ and the automorphisms of an arrow from such a biextension to itself are parametrized by the cohomology group $\Ext^{-1}([\pic]{\otimes}^{\LL}[\qic] ,[\gic])$,
  where $[\pic],[\qic]$ and $[\gic]$ are the complex associated to $\pic, \qic$ and $\gic$ respectively.
\end{abstract}


\maketitle


\tableofcontents

\section*{Introduction}

Let $\bS$ be a site. Let $P$, $Q$ and $G$ be three abelian sheaves on $\bS$.
In~\cite{SGA7} Expos\'e VII Corollary 3.6.5 Grothendieck proves that the group ${\Biext}^0(P,Q;G)$ of automorphisms of any biextension
of $(P,Q)$ by $G$ and the group ${\Biext}^1(P,Q;G)$ of isomorphism
 classes of biextensions of $(P,Q)$ by $G$, have the following homological interpretation:
\begin{equation}\label{intro:homological}
 {\Biext}^i(P,Q;G) \cong {\Ext}^i(P {\buildrel {\scriptscriptstyle \LL}
 \over \otimes} Q,G) \qquad (i=0,1)
\end{equation}
where $P {\buildrel{\scriptscriptstyle \LL} \over \otimes}Q$ is the derived functor of the functor $Q \rightarrow P \otimes Q$ in the derived category $\cD(\bS)$ of complexes of abelian sheaves on $\bS$. The aim of this paper is to find an analogous homological interpretation for biextensions of strictly commutative Picard $\bS$-stacks.

 Let $\pic, \qic$ and $\gic$ be three strictly commutative Picard $\bS$-stacks. We define a
 biextension of $(\pic,\qic)$ by $\gic$ as a $\gic_{\pic \times_{\mathbf{1}} \qic}$-torsor $\bic$ over $\pic \times_{\mathbf{1}} \qic$, endowed with a structure of extension of $\qic_\pic$
by $\gic_\pic$ and a structure of extension of $\pic_\qic$ by $\gic_\qic,$ which are compatible one with another. Biextensions of $(\pic,\qic)$ by $\gic$ form a 2-category ${\cBiext}(\pic,\qic;\gic)$ where
\begin{itemize}
	\item the objects are biextensions of $(\pic,\qic)$ by $\gic$,
	\item the 1-arrows are additive functors between biextensions,
	\item the 2-arrows are morphisms of additive functors.
\end{itemize}
Equivalence classes of biextensions of strictly commutative Picard $\bS$-stacks are endowed with a group law. We denote by ${\cBiext}^1(\pic,\qic;\gic)$ the group of equivalence classes of objects of ${\cBiext}(\pic,\qic;\gic)$, by ${\cBiext}^{0}(\pic,\qic;\gic)$ the group of isomorphism classes of arrows from an object of ${\cBiext}(\pic,\qic;\gic)$ to itself, and 
by ${\cBiext}^{-1}(\pic,\qic;\gic) $ the group of automorphisms of an arrow from an object of ${\cBiext}(\pic,\qic;\gic)$ to itself. With these notation our main Theorem is 

\begin{thm}\label{intro:mainthm}
Let $\pic,\qic$ and $\gic$ be strictly commutative Picard $\bS$-stacks. Then we have the following isomorphisms of groups 

	(a) ${\cBiext}^1(\pic,\qic;\gic) \cong 
{\Ext}^{1}\big([\pic]{\buildrel {\scriptscriptstyle \LL} \over \otimes}[\qic],[\gic]\big)=
{\Hom}_{\cD(\bS)}\big([\pic]{\buildrel {\scriptscriptstyle \LL}
 \over \otimes}[\qic],[\gic][1]\big),$
		
	(b) ${\cBiext}^0(\pic,\qic;\gic) \cong 
{\Ext}^{0}\big([\pic]{\buildrel {\scriptscriptstyle \LL}\over \otimes}[\qic],[\gic]\big)= 
{\Hom}_{\cD(\bS)}\big([\pic]{\buildrel {\scriptscriptstyle \LL} \over \otimes}[\qic],[\gic]\big),$

	(c) ${\cBiext}^{-1}(\pic,\qic;\gic) \cong  
{\Ext}^{-1}\big([\pic]{\buildrel {\scriptscriptstyle \LL} \over \otimes}[\qic],[\gic]\big)=
{\Hom}_{\cD(\bS)}\big([\pic]{\buildrel {\scriptscriptstyle \LL}
 \over \otimes}[\qic],[\gic][-1]\big),$ \\
where $[\pic],[\qic]$ and $[\gic]$ denote the complex of $\cD^{[-1,0]}(\bS)$ corresponding to $\pic,\qic$ and $\gic$ respectively. 
\end{thm}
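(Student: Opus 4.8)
The plan is to push the entire computation through Deligne's equivalence between strictly commutative Picard $\bS$-stacks and the subcategory $\cD^{[-1,0]}(\bS)$ of the derived category, under which each stack becomes its associated complex $[\pic]$. This dictionary already supplies the correct template: for two Picard stacks, extensions are classified by $\Ext^1$, isomorphism classes of additive functors by $\Ext^0=\Hom_{\cD(\bS)}$, and automorphisms of additive functors by $\Ext^{-1}$. The three groups $\cBiext^1,\cBiext^0,\cBiext^{-1}$ are exactly the homotopy invariants of the $2$-category $\cBiext(\pic,\qic;\gic)$ — objects up to equivalence, self-$1$-arrows up to isomorphism, and automorphisms of a self-$1$-arrow — so the goal is to exhibit one object of $\cD(\bS)$, namely $\R\Hom([\pic]\otimes^{\LL}[\qic],[\gic])$, whose cohomology in the three consecutive degrees $1,0,-1$ reproduces these invariants. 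Statements (a), (b), (c) then follow from the identity $\Ext^i=\Hom_{\cD(\bS)}(-,-[i])$.

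For the base case I would lean on Grothendieck's isomorphism~\eqref{intro:homological}: when $\pic,\qic,\gic$ are complexes concentrated in a single degree, hence ordinary abelian sheaves $P,Q,G$, the SGA7 result gives $\Biext^i(P,Q;G)\cong\Ext^i(P\otimes^{\LL}Q,G)$ for $i=0,1$, with the degree $-1$ trivial. First I would check that for such discrete stacks the $2$-categorical notions of biextension, $1$-arrow and $2$-arrow specialize to Grothendieck's classical biextensions, their morphisms, and their automorphisms, so that the three biextension groups coincide with $\Biext^1,\Biext^0$ and the trivial group.

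To pass from sheaves to genuine length-two complexes I would fix presentations $[\pic]=[A^{-1}\to A^0]$, $[\qic]=[B^{-1}\to B^0]$, $[\gic]=[C^{-1}\to C^0]$ and unwind a biextension of $(\pic,\qic)$ by $\gic$ into biextension data on the component sheaves, glued by the compatibility between the two partial extension structures. The crucial point — and the reason one cannot simply invoke the Picard-stack dictionary for ``$\pic\otimes\qic$'' — is that the derived tensor product $[\pic]\otimes^{\LL}[\qic]$ lives in $\cD^{[-2,0]}(\bS)$, i.e. outside the essential image of the Picard-stack functor. I would therefore assemble the sheaf-level data into the hypercohomology of the total complex computing $\R\Hom([\pic]\otimes^{\LL}[\qic],[\gic])$, using the filtration of $[\pic]\otimes^{\LL}[\qic]$ whose graded pieces are the tensor products of the component sheaves, and read off the three groups from three consecutive cohomology degrees. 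The pull-back, push-down and sum operations introduced earlier guarantee that the resulting bijections are homomorphisms of groups, matching the sum of biextensions with the Baer sum in $\Ext$.

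The main obstacle I anticipate is the precise dictionary between the geometry of a biextension and a degree-one cocycle for the total complex: one must show that a $\gic$-torsor over $\pic\times\qic$ equipped with two compatible extension structures is the same datum as a class in $\Ext^1([\pic]\otimes^{\LL}[\qic],[\gic])$, with biadditivity in each variable together with the compatibility condition encoding exactly the cocycle and total-differential relations. Keeping track of the $2$-categorical bookkeeping — that self-$1$-arrows lower the cohomological degree by one and their automorphisms lower it by one more, exactly as additive functors and their automorphisms do in the Picard-stack dictionary — is delicate but follows the same template; once the degree-one identification is in place, the lower degrees should fall out formally.
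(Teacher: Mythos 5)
There is a genuine gap. Your central strategy for (a) is a d\'evissage: specialize to abelian sheaves (where SGA~7 applies), then ``unwind a biextension of $(\pic,\qic)$ by $\gic$ into biextension data on the component sheaves'' of the presentations $[A^{-1}\to A^0]$, etc., and reassemble via the stupid filtration of $[\pic]\otimes^{\LL}[\qic]$. This reduction does not go through: a biextension of Picard stacks only decomposes into biextensions of the component sheaves when all three differentials vanish (the paper records this as Remark~\ref{rem:biextcomplexes}); in general the differentials entangle the degree~$0$ and degree~$-1$ data, and the filtration spectral sequence computes the $\Ext$ groups without providing any mechanism for matching its terms against the geometric pieces of a stack-theoretic biextension. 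You flag ``the precise dictionary between the geometry of a biextension and a degree-one cocycle'' as the main obstacle to be overcome, but that dictionary is not a technical detail to be filled in later --- it is the entire content of part (a), and your plan contains no construction that produces it.

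What is missing is the paper's key device: a canonical flat partial resolution $\lic.(\pic)$ of $\pic$ by free objects $\ZZ[\pic],\ \ZZ[\pic\times\pic],\ \ZZ[\pic\times\pic]+\ZZ[\pic\times\pic\times\pic]$ (a bar-type construction whose differentials encode the group law, commutativity and associativity of $\pic$), together with an intermediate $2$-category $\Psi_{\lic^.}(\gic)$ of pairs (extension, trivialization of its pull-back). Because $\Ext^i(\ZZ[I],G)\cong\h^i(I,G_I)$, extensions of these free objects are torsors, so $\Psi_{\lic.(\pic)\otimes\lic.(\qic)}(\gic)$ unpacks as exactly the torsor-plus-trivializations data of a biextension (Theorem~\ref{thm:psi-geom}), while on the other side $\Psi^i_{\lic^.}(\gic)\cong\Ext^i(\Tot([\lic^.]),[\gic])$ (Theorem~\ref{thm:psi-ext}), and $\Tot([\lic.(\pic)\otimes\lic.(\qic)])$ agrees with $[\pic]\otimes^{\LL}[\qic]$ in the relevant degrees. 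This is the Grothendieck geometric--homological principle, and no amount of filtering $[\pic]\otimes^{\LL}[\qic]$ by its na\"ive components substitutes for it. For (b) and (c) your instinct is closer to the paper's (which reduces to the trivial biextension $\bic_0$ and computes $[\HOM(\bic_0,\bic_0)]$ via the identification of self-functors with biadditive functors $\pic\times\qic\to\gic$, hence with $\HOM(\pic\otimes\qic,\gic)$), but note that these cases are proved directly and first; they do not ``fall out formally'' from the degree-one identification as you suggest.
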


By \cite{SGA4} \S 1.4 there is an equivalence of categories between the category of strictly commutative Picard $\bS$-stacks and the derived category $\cD^{[-1,0]}(\bS)$ of complexes $K$ of abelian sheaves on $\bS$ such that ${\h}^i (K)=0$ for $i \not= -1$ or $0$. 
 Via this equivalence, the above notion of biextension of strictly commutative Picard $\bS$-stacks furnishes a notion of biextension for complexes of abelian sheaves over $\bS$ concentrated in degrees -1 and 0 and the above theorem  generalizes Grothendieck's result (\ref{intro:homological}) to complexes of abelian sheaves concentrated in degrees -1 and 0.
 
The definitions and results of this paper generalizes those of \cite{Be09}: in fact, in loc.cit. we have defined the notion of biextensions of 1-motives and we 
have checked Theorem \ref{intro:mainthm} for 1-motives (recall that a 1-motive can be seen as a complex of abelian sheaves $[u: A \rightarrow B] \in \cD^{[-1,0]}(\bS)$).

The main Theorem \ref{intro:mainthm} furnishes also the homological interpretation of extensions of strictly commutative Picard $\bS$-stacks which was computed in \cite{Be10}:
in fact, it $\mathbf{1}$ is the strictly commutative Picard $\bS$-stack 
such that for any object $U$ of $\bS$,
$\mathbf{1}(U)$ is the category with one object and one arrow, then
\begin{itemize}
	\item the 2-category ${\cBiext}(\pic,\mathbf{1};\gic)$ of biextensions of $(\pic,\mathbf{1})$ by $\gic$
is equivalent to the 2-category ${\cExt}(\pic,\gic)$ of extensions of $\pic$ by $\gic$, and
	\item in the derived category 
$ {\Ext}^i([\pic] {\buildrel {\scriptscriptstyle \LL} \over \otimes} [\mathbf{1}],[\gic]) \cong {\Ext}^i([\pic],[\gic]) $ for $i=-1,0,1$.
\end{itemize}

In~\cite{SGA7} Expos\'e VII Grothendieck states the following geometrical-homological principle: \emph{if an abelian sheaf $A$ on $\bS$ admits an explicit representation
 in $\cD(\bS)$ by a complex ${\rL}.$
whose components are direct sums of objects of the kind ${\ZZ}[I]$,
with $I$ a sheaf on $\bS$, then the groups ${\Ext}^i(A,B)$ admit
an explicit geometrical description for any abelian sheaf $B$ on $\bS$.}\\
A first example of this principle is furnished by
the geometrical notion of extension of abelian sheaves on $\bS$: in fact, if $P$ and
$G$ are two abelian sheaves on $\bS$, it is a classical result that the group
${\Ext}^0(P,G)$ is isomorphic to the group of automorphisms of any extension of $P$ by $G$ and the group ${\Ext}^1(P,G)$ is isomorphic to the group of isomorphism classes of extensions of $P$ by $G.$ The canonical isomorphisms (\ref{intro:homological}) are another  of this Grothendeick's principle 
which involves the geometrical notion of biextension of abelian sheaves.
Other examples of this Grothendeick's principle are described in~\cite{B83}:
If $P$ and $G$ are abelian sheaves on $\bS$,
according to loc.cit. Proposition 8.4, the strictly commutative Picard $\bS$-stack of symmetric biextensions of $(P,P)$ by $G$ is equivalent to the strictly commutative Picard $\bS$-stack associated to the object
 $\tau_{\leq 0} {\RR}{\Hom}({\LL} \mathrm{Sym}^2 (P) ,G[1])$ of $\cD(\bS)$, and according to loc.cit. Theorem 8.9, the strictly commutative Picard $\bS$-stack of the 3-tuple $(L,E,\alpha)$ (resp. the 4-tuple $(L,E,\alpha,\beta)$) defining a cubic structure (resp. a $\Sigma$-structure) on the $G$-torsor $L$ is equivalent to the strictly commutative Picard $\bS$-stack associated to the object $\tau_{\leq 0} {\RR}{\Hom}({\LL} P^+_2 (P),G[1])$
(resp. $\tau_{\leq 0} {\RR}{\Hom}({\LL} \Gamma_2 (P),G[1])$) of $\cD(\bS)$.
Our Theorem~\ref{intro:mainthm} is the first example in the literature where the geometrical-homological principle of Grothendieck is applied to complexes of abelian sheaves.

A strictly commutative Picard $\bS$-2-stack is the 2-analog of a
strictly commutative
Picard $\bS$-stack, i.e. it is an $\bS$-2-stack in 2-groupoids $\PP$ endowed with a morphism of $\bS$-2-stacks $ +: {\PP} \times_{\bS} {\PP} \rightarrow {\PP}$ and with
associative and commutative constraints (see \cite{T} Definition 2.3 for more details). As for strictly commutative Picard $\bS$-stacks and complexes of abelian sheaves concentrated in degrees -1 and 0, in \cite{T} Tatar proves that there is a dictionary between strictly commutative Picard $\bS$-2-stacks and complexes of abelian sheaves concentrated in degrees -2, -1 and 0. 
Using this dictionary, we can rewrite Theorem~\ref{intro:mainthm} as followed: the strictly commutative Picard $\bS$-2-stack of biextensions of $(\pic,\gic)$ by $\qic$ is equivalent to the strictly commutative Picard $\bS$-2-stack associated to the object 
$$\tau_{\leq 0} {\R}{\Hom}\big([\pic]{\buildrel {\scriptscriptstyle \LL} \over \otimes}[\qic],[\gic][1]\big)$$
of $\cD^{[-2,0]}(\bS)$.
If $\mathbf{1}$ denotes the strictly commutative Picard $\bS$-stack such that for any object $U$ of $\bS$,
$\mathbf{1}(U)$ is the category with one object and one arrow, biextensions of $(\pic,\mathbf{1})$ by $\gic$ are just extensions of $\pic$ by $\gic$. According to this remark, Theorem~\ref{intro:mainthm} furnishes another proof of
the main result of \cite{Be10} which states that the strictly commutative Picard $\bS$-2-stack of extensions of $\pic$ by $\gic$ is equivalent to the strictly commutative Picard $\bS$-2-stack associated to the object 
$$\tau_{\leq 0} {\R}{\Hom}\big([\pic],[\gic][1]\big)$$
of $\cD^{[-2,0]}(\bS)$.

This paper is organized as followed: in Section 1 we recall some basic results on the 2-category of strictly commutative Picard $\bS$-stacks. Let $\gic$ be a gr-$\bS$-stack. In Section 2 we define the notions of $\gic$-torsor, morphism of $\gic$-torsors and morphism of morphisms of $\gic$-torsors, getting the 2-category of $\gic$-torsors. In Section 3 we recall some basic results on the 2-category of extensions of strictly commutative Picard $\bS$-stacks.
Let $\pic$ and $\gic$ be two strictly commutative Picard $\bS$-stacks.
In Section 4 we prove that it exists an equivalence of 2-categories between the 2-category of extensions of $\pic$ by $\gic$ and the 2-category consisting of the data $(\eic,I,M,\alpha,\chi)$, where $\eic$ is a $\gic$-torsor over $\pic$, $I$ is 
a trivialization of its pull-back via the additive functor $\mathbf{1}: \mathbf{e} \rightarrow \pic$, $M : p_1^* \; \eic \wedge p_2^* \; \eic \longrightarrow +^* \; \eic$ is a morphism of $\gic$-torsors (where $+$ is the group law of $\pic$ and $p_i: \pic \times \pic \rightarrow \pic$ are the projections), and $\alpha$ and $\chi$ are two isomorphisms of morphisms of $\gic$-torsors involving the morphism of $\gic$-torsors $M$ (Theorem \ref{thm:ext-tor}). This generalizes to strictly commutative Picard $\bS$-stacks the following result of Grothendieck (\cite{SGA7} Expos\'e VII 1.1.6 and 1.2): if $P$ and $Q$ are two abelian sheaves, to have an extension of $P$ by $G$ is the same thing as to have the 4-tuple
$(P,G,E,\varphi)$, where $E$ is a $G_P$-torsor over $P$, and $\varphi: pr_1^*E ~ pr_2^*E \rightarrow +^* E$ is an isomorphism of torsors over $P\times P$ satisfying some associativity and commutativity conditions. Let $\pic, \qic,\gic$ be three strictly commutative Picard $\bS$-stacks.
In Section 5 we define the notions of biextension of $(\pic, \qic)$ by $\gic$, morphism of such biextensions and morphism of morphisms of such biextensions, getting the 2-category of biextensions of $(\pic, \qic)$ by $\gic$. In Section 6 
we introduce the notions of pull-back and push-down of biextensions of strictly commutative Picard $\bS$-stacks. This will allow us to define a
group law for equivalence classes of biextensions of strictly commutative Picard $\bS$-stacks. In Section 7 we prove the cases $(b)$ and $(c)$ of Theorem \ref{intro:mainthm}. In order to prove the case $(a)$ we need to introduce an intermediate 2-category $\Psi_{\lic^.}(\gic)$ that we construct using a strictly commutative Picard $\bS$-stack $\gic$ and a complex $\lic^.$ of strictly commutative Picard $\bS$-stacks (Section 8). This 2-category $\Psi_{\lic^.}(\gic)$ has the following \emph{homological description}:
\begin{equation}\label{intro:psi-homo}
    \Psi_{\lic^.}^i(\gic) \cong {\Ext}^i({\Tot}([\lic^.]),[\gic]) \qquad (i=0,1,2)
\end{equation}
where $\Psi_{\lic^.}^1(\gic)$ is the group of equivalence classes of objects of 
$\Psi_{\lic^.}(\gic)$, $\Psi_{\lic^.}^0(\gic)$ is the group of isomorphism classes of arrows from an object of $\Psi_{\lic^.}(\gic)$ to itself, and $\Psi_{\lic^.}^{-1}(\gic)$ is the group of automorphisms of an arrow from an object of $\Psi_{\lic^.}(\gic)$ to itself.
In section 9, to any complex of the kind $[\pic]=[d^P: P^{-1} \rightarrow P^0]$ we associate a canonical flat partial resolution $[\lic^.(\pic)]$ 
whose components are direct sums of objects of the kind ${\ZZ}[I]$ with $I$ an abelian sheaf on $\bS$. Here ``partial resolution'' means that we have an isomorphism between the cohomology groups of $[\pic]$ and of this partial resolution only in degree 1, 0 and -1. This is enough
for our goal since only the groups $\Ext^1, \Ext^0$ and $\Ext^{-1}$ are involved in the statement of Theorem~\ref{intro:mainthm}. The category $\Psi_{\lic^.(\pic) \otimes \lic^.(\gic)}(\gic)$ admit the following \emph{geometrical description}:
 \begin{equation}\label{intro:psi-geo}
 \Psi_{\lic^.(\pic) \otimes \lic^.(\gic)}(\gic)  \cong   {\cBiext}([\pic],[\qic];[\gic])
\end{equation}
Putting together this geometrical description~(\ref{intro:psi-geo}) with the homological description~(\ref{intro:psi-homo}), in Section 10 we finally prove Theorem~\ref{intro:mainthm}.

\section*{Notation}

Let $\bS$ be a site. 
Denote by $\cK(\bS)$ the category of complexes of abelian sheaves on the site $\bS$: all complexes that we consider in this paper are cochain complexes (excepted in Section 9 and 10 where we switch to homological notation).
Let $\cK^{[-1,0]}(\bS)$ be the subcategory of $\cK(\bS)$ consisting of complexes $K=(K^i)_i$ such that $K^i=0$ for $i \not= -1$ or $0$. The good truncation $ \tau_{\leq n} K$ of a complex $K$ of $\cK(\bS)$ is the following complex: $ (\tau_{\leq n} K)^i= K^i$ for $i <n,  (\tau_{\leq n} K)^n= \ker(d^n)$
and $ (\tau_{\leq n} K)^i= 0$ for $i > n.$ For any $i \in {\ZZ}$, the shift functor $[i]:\cK(\bS) \rightarrow \cK(\bS) $ acts on a  complex $K=(K^n)_n$ as $(K[i])^n=K^{i+n}$ and $d^n_{K[i]}=(-1)^{i} d^{n+i}_{K}$.
If ${\rL}^{..}$ is a bicomplex of abelian sheaves on the site $\bS$, we denote by
${\Tot}({\rL}^{..})$ the total complex of ${\rL}^{..}$: it is the cochain complex whose component of degree $n$ is ${\Tot}({\rL}^{..})^n=\sum_{i+j=n} {\rL}^{ij}.$

Denote by $\cD(\bS)$ the derived category of the category of abelian sheaves on $\bS$, and let $\cD^{[-1,0]}(\bS)$ be the subcategory of $\cD(\bS)$ consisting of complexes $K$ such that ${\h}^i (K)=0$ for $i \not= -1$ or $0$. If $K$ and $K'$ are complexes of $\cD(\bS)$, the group ${\Ext}^i(K,K')$ is by definition ${\Hom}_{\cD(\bS)}(K,K'[i])$ for any $i \in {\ZZ}$. Let ${\R}{\Hom}(-,-)$ be the derived functor of the bifunctor ${\Hom}(-,-)$. The cohomology groups\\ ${\h}^i\big({\R}{\Hom}(K,K') \big)$ of 
${\R}{\Hom}(K,K')$ are isomorphic to ${\Hom}_{\cD(\bS)}(K,K'[i])$.

A \textbf{2-category} $\mathcal{A}=(A,C(a,b),K_{a,b,c},U_{a})_{a,b,c \in A}$ is given by the following data:
\begin{itemize}
  \item a set $A$ of objects $a,b,c, ...$;
  \item for each ordered pair $(a,b)$ of objects of $A$, a category $C(a,b)$;
  \item for each ordered triple $(a,b,c)$ of objects $A$, a functor
      $K_{a,b,c}:C(b,c) \times C(a,b) \longrightarrow C(a,c),$
      called composition functor. This composition functor have to satisfy the associativity law;
  \item for each object $a$, a functor $U_a:\mathbf{1} \rightarrow C(a,a)$ where \textbf{1} is the terminal category (i.e. the category with one object, one arrow), called unit functor. This unit functor have to provide a left and right identity for the composition functor.
\end{itemize}

This set of axioms for a 2-category is exactly like the set of axioms for a category in which the arrows-sets ${\Hom}(a,b)$ have been replaced by the categories $C(a,b)$.
We call the categories $C(a,b)$ (with $a,b \in A$) the \textbf{categories of morphisms} of the 2-category $\mathcal{A}$: the objects of $C(a,b)$ are the \textbf{1-arrows} of $\mathcal{A}$ and the arrows of $C(a,b)$ are the \textbf{2-arrows} of $\mathcal{A}$.

Let $\mathcal{A}=(A,C(a,b),K_{a,b,c},U_{a})_{a,b,c \in A} $ and $ \mathcal{A}'=(A',C(a',b'),K_{a',b',c'},U_{a'})_{a',b',c' \in A'}$ be two 2-categories. A \textbf{2-functor} (called also a \textbf{morphism of 2-categories})
$$(F,F_{a,b})_{a,b \in A}: \mathcal{A} \longrightarrow \mathcal{A}'$$
consists of
\begin{itemize}
  \item an application $F: A \rightarrow A'$ between the objects of $\mathcal{A}$ and the objects of $\mathcal{A}'$,
  \item a family of functors $F_{a,b}:C(a,b) \rightarrow C(F(a),F(b))$ (with $a,b \in A$) which are compatible with the composition functors and with the unit functors of $\mathcal{A}$ and $\mathcal{A}'$.
\end{itemize}

\section{The 2-category of Picard stacks}

Let {\bS} be a site. For the notions of {\bS}-pre-stack, {\bS}-stack, morphism of {\bS}-stacks and morphism of morphisms of {\bS}-stacks we refer to \cite{G} Chapter II 1.2.

A \textbf{strictly commutative Picard {\bS}-stack} consists of an {\bS}-stack of groupoids $\pic$, a morphism of {\bS}-stacks $ +: \pic \times \pic \rightarrow \pic$ (called the group law of $\pic$) with two natural isomorphisms of associativity $\sigma$ and of commutativity $\tau$, which are described by the functorial isomorphisms
\begin{eqnarray}
\label{ass}  \sigma_{a,b,c} &:& (a + b) + c \stackrel{\cong}{\longrightarrow} a + ( b + c) \qquad \forall~ a,b,c \in \pic, \\
\label{com} \tau_{a,b} &:& a + b \stackrel{\cong}{\longrightarrow} b + a \qquad \forall~ a,b \in \pic,
\end{eqnarray}
 a global neutral object $e$ with two natural isomorphisms 
\begin{equation}\label{eds}
 l_a: e+ a \stackrel{\cong}{\longrightarrow} a, \qquad  
 r_a: a+e \stackrel{\cong}{\longrightarrow} a \qquad \forall~ a \in \pic
\end{equation}
which coincide on $e$ ($l_e=r_e$), and finally a morphism of {\bS}-stacks $ -: \pic \rightarrow \pic$  with two natural isomorphisms 
\begin{equation}\label{inv}
 o_a: a+(-a) \stackrel{\cong}{\longrightarrow} e, \qquad  
c_{ab}: -(a+b) \stackrel{\cong}{\longrightarrow} (-a) +(-b) \qquad \forall~ a \in \pic.
\end{equation}
These data have to satisfy the following conditions: 
\begin{itemize}
	\item the natural isomorphism $\sigma$ is coherent, i.e. for any $a,b,c$ and $d \in \pic$ the following pentagonal diagram commute 
\begin{equation}\label{pic-1}
\xymatrix{
   a+(b +(c+d)) &  \ar[l]_{\sigma} (a+b) +(c+d)   &((a+b) +c)+d \ar[d]^{\sigma +id_\pic} \ar[l]_\sigma \\
a+((b +c)+d)  \ar[u]^{id_\pic + \sigma} & &(a+(b +c))+d \ar[ll]_\sigma  
}
\end{equation}
	\item for any $a \in \pic,$
\begin{equation}\label{pic-2}
	 \tau_{a,a}: a + a \longrightarrow a + a
\end{equation}
is the identity; This condition, which is seldom verified, justifies the terminology \emph{strictly} commutative.
	\item the natural isomorphism $\tau$ is coherent, i.e. for any $a$ and $b\in \pic$ the following diagram commute 
\begin{equation}\label{pic-3}
\xymatrix{
   a+b \ar[r]^\tau \ar[dr]_{id_\pic} & b+a \ar[d]^\tau \\
 & a+b
}
\end{equation}	
	\item the natural isomorphisms $\sigma$ and $\tau$ are compatible, i.e. for any $a,b$ and $c \in \pic$ the following hexagonal diagram commute 
\begin{equation}\label{pic-4}
 \xymatrix{
   &  b+(c +a)   & \\
(b+c)+a  \ar[ur]^\sigma & &b+(a+c) \ar[ul]_{id_\pic + \tau } \\
a+(b+c) \ar[u]^\tau & & (b+a)+c \ar[u]_\sigma \\
  &  (a+b) +c \ar[ur]_{\tau + id_\pic} \ar[ul]^\sigma  &
}
\end{equation}
\item the natural isomorphism $\sigma$ and the neutral object are compatible, i.e. for any $a $ and $b\in \pic$ the following diagram commute 
\begin{equation}\label{pic-5}
\xymatrix{
   (a+e)+b \ar[r]^\sigma \ar[dr]_{r+id_\pic} & a+(e+b) \ar[d]^{id_\pic +l} \\
 & a+b.
}
\end{equation}
\end{itemize}
In particular, for any object $U$ of $\bS$, $(\pic (U),+,e,-)$ is a strictly commutative Picard category (see Definition 1.4.2 \cite{SGA4}). 
The sheaf of automorphisms of the neutral object ${\uAut}(e)$ is abelian.
If $\pic$ and $\qic$ are two strictly commutative Picard {\bS}-stacks,
 an \textbf{additive functor} $(F,\sum):\pic \rightarrow \qic $
 is a morphism of {\bS}-stacks $F:\pic \rightarrow \qic $ endowed with a natural isomorphism $\sum$ which is described by the functorial isomorphisms
\[\sum_{a,b}:F(a+b) \stackrel{\cong}{\longrightarrow} F(a)+F(b)
\qquad \forall~ a,b \in \pic \]
and which is compatible with the natural isomorphisms $\sigma$ and $\tau$ of
$\pic$ and $\qic$. A \textbf{morphism of additive functors $\alpha:(F,\sum) \Rightarrow (F',\sum') $} is a morphism of morphisms of {\bS}-stacks $\alpha:F \Rightarrow F' $ which is compatible with the natural isomorphisms $\sum$ and $\sum'$ of $F$ and $F'$ respectively.
We denote by ${\Add}_{\bS} (\pic,\qic)$
the category whose objects are additive functors from $\pic$ to $\qic$ and whose arrows are morphisms of additive functors. The category ${\Add}_{\bS} (\pic,\qic)$ is a groupoid, i.e. any morphism of additive functors is an isomorphism of additive functors.

An \textbf{equivalence of strictly commutative Picard {\bS}-stacks} between $\pic$ and $\qic$ is an additive functor $(F,\sum):\pic \rightarrow \qic$ with $F$ an equivalence of {\bS}-stacks. Two strictly commutative Picard {\bS}-stacks are \emph{equivalent as strictly commutative Picard {\bS}-stacks} if there exists an equivalence of strictly commutative Picard {\bS}-stacks between them.

To any strictly commutative Picard {\bS}-stack $\pic$, we associate the sheaffification $\pi_0(\pic)$ of the pre-sheaf which associates to each object $U$ of $\bS$ the group of isomorphism classes of objects of $\pic(U)$,
 the sheaf $\pi_1(\pic)$ of automorphisms ${\uAut}(e)$ of the neutral object of $\pic$, and an element $\varepsilon(\pic)$ of ${\Ext}^2(\pi_0(\pic),\pi_1(\pic))$. 
Two strictly commutative Picard {\bS}-stacks $\pic$ and $\pic'$ are equivalent as strictly commutative Picard {\bS}-stacks if and only if $\pi_i(\pic)$ is isomorphic to $\pi_i(\pic')$ for $i=0,1$ and $\varepsilon(\pic)=\varepsilon(\pic')$.

A \textbf{strictly commutative Picard {\bS}-pre-stack} consists of 
an {\bS}-pre-stack of groupoids $\pic$, a morphism of {\bS}-stacks $ +: \pic \times \pic \rightarrow \pic$ with a natural isomorphism of associativity $\sigma$ (\ref{ass}),
a global neutral object $e$ with two natural isomorphisms $r$ and $l$ (\ref{eds}),
and a morphism of {\bS}-stacks $ -: \pic \rightarrow \pic$  with two natural isomorphisms $o$ and $c$ (\ref{inv}), such that
 for any object $U$ of $\bS$, $(\pic (U),+,e,-)$ is a strictly commutative Picard category. If $\pic$ is a strictly commutative Picard \bS-pre-stack, there exists modulo a unique equivalence
one and only one pair $(a \pic, j)$ where $a\pic$ is a strictly commutative Picard \bS-stack and $j:  \pic \rightarrow a \pic$ is an additive functor. $(a \pic, j)$ is \emph{the strictly commutative Picard \bS-stack generated by $\pic$.}

In~\cite{SGA4} \S 1.4 Deligne associates to each complex
$K$
of $\cK^{[-1,0]}(\bS)$ a strictly commutative Picard {\bS}-stack $st(K)$ and to each morphism of complexes $g: K \rightarrow L$ an additive functor 
$st(g):st(K) \rightarrow st(L)$
between the strictly commutative Picard {\bS}-stacks associated to the complexes $K$ and $L$. 
Moreover he proves the following links between strictly commutative Picard {\bS}-stacks and complexes of $\cK^{[-1,0]}(\bS)$, between additive functors and morphisms of complexes and between morphisms of additive functors and homotopies of complexes:
\begin{itemize}
	\item for any strictly commutative Picard $\bS$-stack $\pic$ there exists a complex $K$ of $\cK^{[-1,0]}(\bS)$ such that $\pic = st(K)$;
	\item if $K,L$ are two complexes of $\cK^{[-1,0]}(\bS)$, then for any additive functor $F: st(K) \rightarrow st(L)$ there exists a quasi-isomorphism $k:K' \rightarrow K$ and a morphism of complexes $l:K' \rightarrow L$ such that $F$ is isomorphic as additive functor to $st(l) \circ st(k)^{-1}$;
	\item if $f,g: K \rightarrow L$ are two morphisms of complexes of $\cK^{[-1,0]}(\bS)$, then
\begin{equation}
{\Hom}_{{\Add}_{\bS}(st(K),st(L))}(st(f),st(g)) \cong \Big\{ \mathrm{homotopies}~ H:K \rightarrow L ~~|~~ g-f=dH+Hd \Big\}.
\label{eq:homotopies}
\end{equation}
\end{itemize}
Denote by ${\bPicard}(\bS)$ the category whose objects are small strictly commutative Picard $\bS$-stacks and whose arrows are isomorphism classes of additive functors. The above links between strictly commutative Picard $\bS$-stacks and complexes of abelian sheaves on $\bS$ furnish the equivalence of category:
\begin{eqnarray}
\label{st} st: \cD^{[-1,0]}(\bS) &\longrightarrow & {\bPicard}(\bS) \\
 \nonumber  K & \mapsto & st(K) \\
 \nonumber K \stackrel{f}{\rightarrow} L & \mapsto &  st(K) \stackrel{st(f)}{\rightarrow} st(L).
\end{eqnarray}
We denote by $[\,\,]$ the inverse equivalence of $st$. 
Let ${\cPicard}(\bS)$ be the 2-category of strictly commutative Picard $\bS$-stacks whose objects are strictly commutative Picard $\bS$-stacks and whose categories of morphisms are the categories ${\Add}_{\bS}(\pic,\qic)$ (i.e.
       the 1-arrows are additive functors between strictly commutative Picard $\bS$-stacks and the 2-arrows are morphisms of additive functors).
Via the functor $st$, there exists a 2-functor between

	(a) the 2-category whose objects and 1-arrows are the objects and the arrows of the category $\cK^{[-1,0]}(\bS)$ and whose 2-arrows are the homotopies between 1-arrows (i.e. $H$ such that $g-f=dH+Hd$ with $f,g: K \rightarrow L$ 1-arrows),
 
 	(b) the 2-category ${\cPicard}(\bS)$.

\begin{ex}\label{ex:pic} Let $\pic, \qic$ and $\gic$ be three strictly commutative Picard {\bS}-stacks. \\
I) Let 
$${\HOM}(\pic,\qic)$$
be the strictly commutative Picard {\bS}-stack defined as followed:
 for any object $U$ of $\bS$, the objects of the category ${\HOM}(\pic,\qic)(U)$ are additive functors from ${\pic}_{|U}$ to ${\qic}_{|U}$ and its arrows are morphisms of additive functors. 
By (\ref{eq:homotopies}) and (\ref{st}), we have the equality 
$[{\HOM}(\pic,\qic)] = \tau_{\leq 0}{\R}{\Hom}\big([\pic],[\qic]\big)$ in the derived category $\cD^{[-1,0]}(\bS).$\\
II) A \textbf{biadditive functor} $(F,l,r):\pic \times \qic \rightarrow \gic $
 is a morphism of {\bS}-stacks $F:\pic \times \qic \rightarrow \gic $ endowed with two natural isomorphisms, which are described by the functorial isomorphisms
\begin{eqnarray}
 \nonumber l_{a,b,c}:F(a+b,c) &\stackrel{\cong}{\longrightarrow}& F(a,c)+F(b,c) \qquad \forall~ a,b \in \pic, \; \forall~ c \in \qic  \\
 \nonumber r_{a,c,d}:F(a,c+d) &\stackrel{\cong}{\longrightarrow}& F(a,c)+F(a,d)
 \qquad \forall~ a \in \pic, \; \forall~ c,d \in \qic,
\end{eqnarray}
 such that 
\begin{itemize}
	\item for any fixed $a \in \pic$, $F(a,-)$ is compatible with the natural isomorphisms $\sigma$ and $\tau$ of $\pic$ and $\gic$,
       \item for any fixed $c \in \qic$, $F(-,c)$ is compatible with the natural isomorphisms $\sigma$ and $\tau$ of $\qic$ and $\gic$,
        \item for any fixed $a,b\in \pic$ and $c,d \in \qic$ is the following diagram commute 
  \[    \xymatrix{
     F(a+b,c+d)\ar[r]^r  \ar[d]_l& F(a+b,c)+F(a+b,d) \ar[r]^{l+l \qquad}& F(a,c)+F(b,c)+F(a,d)+F(b,d)   \\
   F(a,c+d)+F(b,c+d) \ar[rr]_{r+r}  &  & F(a,c)+F(a,d)+F(b,c)+F(b,d) \ar[u]_{id_\gic +\tau+ id_\gic} .
}\]  
\end{itemize}
A \textbf{morphism of biadditive functors $\alpha:(F,l,r) \Rightarrow (F',l',r') $} is a morphism of morphisms of {\bS}-stacks $\alpha:F \Rightarrow F' $ which is compatible with the natural isomorphisms $l,r$ and $l',r$ of $F$ and $F'$ respectively.
 Let 
 $${\HOM}(\pic,\qic;\gic)$$
be the strictly commutative Picard {\bS}-stack defined as followed:
 for any object $U$ of $\bS$, the objects of the category ${\HOM}(\pic,\qic;\gic)(U)$ are biadditive functors from ${\pic}_{|U} \times {\qic}_{|U}$ to ${\gic}_{|U}$ and its arrows are morphisms of biadditive functors.\\
 III) Let 
 $$\pic \otimes \qic$$
  be the strictly commutative Picard $\bS$-stack endowed with a biadditive functor $\otimes :\pic \times \qic \rightarrow \pic \otimes \qic$ such that for any strictly commutative Picard $\bS$-stack $\gic$, the biadditive functor $\otimes$ defines the following equivalence of strictly commutative Picard $\bS$-stacks: 
\begin{equation}
{\HOM}(\pic \otimes \qic, \gic) \cong  {\HOM}(\pic,\qic;\gic).
\label{eq:tensorlinear}
\end{equation}
According to \cite{SGA4} 1.4.20, in the derived category $\cD^{[-1,0]}(\bS)$ we have the equality 
$[\pic\otimes \qic] = \tau_{\geq -1} ([\pic] \otimes^\LL[\qic]) $.
\end{ex}

According to \S 2 \cite{Be10} we have the following operations on strictly commutative Picard $\bS$-stacks:\\
(1) The \textbf{product} of two strictly commutative Picard $\bS$-stacks $\pic$ and $\qic$ is the strictly commutative Picard $\bS$-stack $\pic \times \qic$ defined as followed:
\begin{itemize}
  \item for any object $U$ of $\bS$, an object of the category $\pic \times \qic(U)$ is a pair $(p,q)$ of objects with $p$ an object of $\pic(U)$ and $q$ an object of $\qic(U)$;
  \item for any object $U$ of $\bS$, if $(p,q)$ and $(p',q')$ are two objects of 
  $\pic \times \qic(U)$, an arrow of $\pic \times \qic(U)$ from $(p,q)$ to $(p',q')$ is a pair $(f,g)$ of arrows with $f:p \rightarrow  p' $ an arrow of $\pic(U)$ and $g:q \rightarrow  q' $ an arrow of $\qic(U)$.
\end{itemize}
(2) Let $G:\pic \rightarrow \qic$ and $F:\pic' \rightarrow \qic$ be additive functors between strictly commutative Picard $\bS$-stacks. The \textbf{fibered product} of $\pic$ and $\pic'$ over $\qic$ via $F$ and $G$ is the strictly commutative Picard $\bS$-stack $\pic \times_\qic \pic'$ defined as followed: 
\begin{itemize}
  \item for any object $U$ of {\bS}, the objects of the category $(\pic \times_\qic \pic')(U)$ are triplets $(p,p',f)$ where $p$ is an object of $\pic(U)$, $p' $ is an object of $\pic'(U)$ and $f:G(p) \stackrel{\cong}{\rightarrow}F(p')$ is an isomorphism of $\qic(U)$ between $G(p)$ and $F(p')$;
  \item for any object $U$ of {\bS}, if $(p_1,p'_1,f)$ and $(p_2,p'_2,g)$ are two objects of
  $(\pic \times_\qic \pic')(U)$, an arrow of $(\pic \times_\qic \pic')(U)$ 
 from  $(p_1,p'_1,f)$ to $(p_2,p'_2,g)$ is a pair $(f,g)$ of arrows with
  $\alpha:p_1 \rightarrow p_2$ of arrow of $\pic(U)$ and $\beta:p'_1 \rightarrow p'_2$ an arrow of $\pic'(U)$ such that $ g \circ G(\alpha) = F(\beta) \circ f$.
\end{itemize}
The fibered product $\pic \times_\qic \pic'$ is also called the \textbf{pull-back} $F^*\pic$ of $\pic$ via $F:\pic' \rightarrow \qic$ or the \textbf{pull-back} $G^*\pic'$ of $\pic'$ via $G:\pic \rightarrow \qic$.\\
(3) Let $G:\qic \rightarrow \pic$ and $F:\qic \rightarrow \pic'$ be additive functors between strictly commutative Picard $\bS$-stacks. The \textbf{fibered sum} of $\pic$ and $\pic'$ under $\qic$ via $F$ and $G$ is the strictly commutative Picard $\bS$-stack $\pic +^\qic \pic'$ generated by the following strictly commutative Picard $\bS$-pre-stack $\dic$:
\begin{itemize}
  \item for any object $U$ of {\bS}, the objects of the category $\dic(U)$ are the objects of the category $(\pic \times \pic')(U)$, i.e. pairs $(p,p')$ 
with $p$ an object of $\pic(U)$ and  $p'$ an object of $\pic'(U)$;
  \item for any object $U$ of {\bS}, if $(p_1,p'_1)$ and $(p_2,p'_2)$ are two objects of $\dic(U)$, 
an arrow of $\dic (U)$ from $(p_1,p'_1)$ to $(p_2,p'_2)$ is an equivalence class of triplets   
  $(q,\alpha,\beta)$ with $q$ an object of $\qic (U)$, $\alpha: p_1 + G(q) \rightarrow p_2$ an arrow of $\pic(U)$ and $\beta: p'_1 + F(q) \rightarrow p'_2$ 
an arrow of $\pic'(U)$. Two triplets $(q_1,\alpha_1,\beta_1)$ and $(q_2,\alpha_2,\beta_2)$
are equivalent it there is an arrow $\gamma: q_1 \rightarrow q_2$ in $\qic (U)$ such that $\alpha_2 \circ (id +G(\gamma)) = \alpha_1$ and
 $ (F(\gamma) + id) \circ \beta_1 =\beta_2 $.  
\end{itemize}
The fibered sum $\pic +^\qic \pic'$ is also called the \textbf{push-down} $F_*\pic$ of $\pic$ via $F:\qic \rightarrow \pic'$ or the \textbf{push-down} $G_*\pic'$ of $\pic'$ via $G:\qic \rightarrow \pic$.

We have analogous operations on complexes of $\cK^{[-1,0]}(\bS)$:\\
(1) The \textbf{product} of two complexes $P=[d^P:P^{-1} \rightarrow P^0]$ and $ Q=[d^Q: Q^{-1} \rightarrow Q^0]$ of $\cK^{[-1,0]}(\bS)$ is the complex $P+Q=[(d^P,d^Q):P^{-1} + Q^{-1}\rightarrow P^0 +Q^0]$. Via the equivalence of category (\ref{st}) we have that $st(P+ Q) = st(P) \times st(Q)$.\\
(2) Let $P=[d^P:P^{-1} \rightarrow P^0], Q=[d^Q: Q^{-1} \rightarrow Q^0]$ and $G=[d^G: G^{-1} \rightarrow G^0]$ be complexes of $\cK^{[-1,0]}(\bS)$ and let $f:P \rightarrow G$ and $g:Q \rightarrow G$ be morphisms of complexes. The \textbf{fibered product} $P \times_G Q$ of $P$ and $Q$ over $G$
 is the complex $[d_P \times_{d_G} d_Q: P^{-1} \times_{G^{-1}} Q^{-1} \rightarrow P^{0} \times_{G^0} Q^{0}]$,
where for $i=-1,0$ the abelian sheaf $P^{i} \times_{G^i} Q^{i}$ is the fibered product of $P^i$ and of $Q^i$ over $G^i$ and the morphism of abelian sheaves $d_P \times_{d_G}d_Q $ is given by the universal property of the fibered product 
$P^{0} \times_{G^0} Q^{0}$. The fibered product $P \times_G Q$ is also called the \textbf{pull-back} $g^*P$ of $P$ via $g:Q \rightarrow G$ or the \textbf{pull-back} $f^*Q$ of $Q$ via $f:P \rightarrow G$.
Remark that $st(P \times_G Q) = st(P) \times_{st(G)} st(Q)$ via the equivalence of category (\ref{st}).\\
(3) Let $P=[d^P:P^{-1} \rightarrow P^0], Q=[d^Q: Q^{-1} \rightarrow Q^0]$ and $G=[d^G: G^{-1} \rightarrow G^0]$ be complexes of $\cK^{[-1,0]}(\bS)$ and let $f:G \rightarrow P$ and $g:G \rightarrow Q$ be morphisms of complexes. The \textbf{fibered sum} $P +^G Q$ of $P$ and $Q$ under $G$
 is the complex $[d_P +^{d_G} d_Q: P^{-1} +^{G^{-1}} Q^{-1} \rightarrow P^{0} +^{G^0} Q^{0}]$,
where for $i=-1,0$ the abelian sheaf $P^{i} +^{G^i} Q^{i}$ is the fibered sum of $P^i$ and of $Q^i$ under $G^i$ and the morphism of abelian sheaves $d_P +^{d_G}d_Q $ is given by the universal property of the fibered sum 
$P^{-1} +^{G^{-1}} Q^{-1}$. The fibered sum $P +^G Q$ is also called the \textbf{push-down} $g_*P$ of $P$ via $g:G \rightarrow Q$ or the \textbf{push-down} $f_*Q$ of $Q$ via $f:G \rightarrow P$.
We have $st(P +^G Q) = st(P) +^{st(G)} st(Q)$ via the equivalence of category (\ref{st}).

\section{The 2-category of $\gic$-torsors}

Let $\gic$ be a gr-$\bS$-stack, i.e. an {\bS}-stack of groupoids $\gic$ equipped with the following data: a morphism of {\bS}-stacks $ +: \gic \times \gic \rightarrow \gic$ with a natural isomorphism of associativity $\sigma$ (\ref{ass}),
a global neutral object $e$ with two natural isomorphisms $r$ and $l$ (\ref{eds}),
and a morphism of {\bS}-stacks $ -: \pic \rightarrow \pic$  with two natural isomorphisms $o$ and $c$ (\ref{inv}), such that
 for any object $U$ of $\bS$, $(\gic (U),+,e,-)$ is a gr-category (i.e. see \cite{B92} \S 3.1 for more details). Remark that a strictly commutative Picard {\bS}-stack is a gr-$\bS$-stack endowed with a strict commutative condition $\tau$ (\ref{com}) and (\ref{pic-2}).

\begin{defn}
A \textbf{left $\gic$-torsor} $\pic=(\pic, M, \mu)$ consists of 
\begin{itemize}
	\item an {\bS}-stack of groupoids $\pic$,
	\item a morphism of {\bS}-stacks $M: \gic \times \pic \rightarrow \pic$, and
	\item an isomorphism of morphisms of {\bS}-stacks $\mu: M \circ (+ \times id_\pic)  \Rightarrow  M \circ (id_\gic \times M) $ 
	\[\xymatrix{
 \gic \times \gic \times \pic \qquad  \ar[r]^{+ \, \times id_\pic}  \ar[d]_{id_\gic \times M}&  \gic \times \pic \ar[d]^M \ar@{=>}[dl]^\mu\\
  \gic \times \pic \ar[r]_M & \pic
}
\]
 which is described by the functorial isomorphism 
$\mu_{g_1,g_2,p}: M(g_1+g_2,p) \rightarrow M(g_1,M(g_2,p))$ for any $g_1,g_2 \in \gic$ and $p \in \pic$,
\end{itemize}
 such that the following conditions are satisfied:

	(i) the natural isomorphism $\mu$ is compatible with the natural isomorphism of associativity $\sigma$ underlying $\gic$, i.e. the following diagram commute for any $g_1,g_2,g_3 \in \gic$ and $p \in \pic$
 \[\xymatrix{
  M( (g_1+g_2) +g_3,p) \ar[rr]^{M(\sigma,id_\pic)} \ar[d]_\mu&  & M(g_1+(g_2 +g_3) ,p)  \ar[d]^\mu\\
M( g_1+g_2, M(g_3,p))\ar[dr]_\mu &  &M(g_1, M(g_2 +g_3 ,p))\ar[dl]^\mu\\
   &M(g_1,M(g_2,M(g_3,p))) & 
}
\]
   
     (ii) the restriction of the morphism of {\bS}-stacks $M$ to $\mathbf{e} \times \pic$ is equivalent to the identity, i.e. 
$M(e,p) \cong p$ for any $p \in \pic$ (here $\mathbf{e}$ denotes the gr-$\bS$-stack such that for any object $U$ of $\bS$,
$\mathbf{e}(U)$ is the category consisting of the neutral object $e$ of $\gic$). Moreover we require that this restriction of $M$ is compatible with the natural isomorphism $\mu$, i.e. the following diagrams commute for any $g \in \gic$ and $p \in \pic$  
  \[\xymatrix{
  M( g+e,p) \ar[rr]^\mu \ar[dr] & & M(g,M(e,p)) \ar[dl] \\
 &M(g,p) &
}
\]
 \[\xymatrix{
  M( e+g,p) \ar[rr]^\mu \ar[dr] & & M(e,M(g,p)) \ar[dl] \\
 &M(g,p) &
}
\]
	
	(iii) the morphism of {\bS}-stacks $(M,Pr_2): \gic \times \pic \rightarrow \pic \times \pic$ is an equivalence of {\bS}-stacks (here $Pr_2:\gic \times \pic \rightarrow \pic $ denotes the second projection),

	(iv) it exists a covering sieve $R$ of the site {\bS} such that for any object $U$ of $R$ the category $\pic(U)$ is not empty.
\end{defn}

\begin{defn}
A \textbf{morphism of left $\gic$-torsors} 
$$(F,\gamma):(\pic, M, \mu) \rightarrow (\pic',M',\mu')$$
 consists of 
\begin{itemize}
	\item a morphism of {\bS}-stacks $F: \pic \rightarrow \pic'$ and
	\item an isomorphism of morphisms of {\bS}-stacks $\gamma: M' \circ (id_\gic \times F)  \Rightarrow  F \circ M $ described by the functorial isomorphism 
$\gamma_{g,p}: M'(g,F(p)) \rightarrow F(M(g,p))$ for any $g \in \gic$ and $p \in \pic$, 
\end{itemize}
 which are compatible with the natural isomorphisms $\mu$ and $\mu'$, i.e. the following diagram commute for any $g_1,g_2 \in \gic$ and $p \in \pic$
 \[\xymatrix{
 M'(g_1+g_2,F(p)) \ar[d]_{\gamma_{g_1+g_2,p}}\quad \ar[r]^{\mu'_{g_1,g_2,F(p)}} &  \quad M'(g_1 ,M'(g_2,F(p))) \quad \ar[r]^{M'(id_\gic,\gamma_{g_2,p})}& \quad M'(g_1 ,F(M(g_2,p))) \ar[d]^{\gamma_{g_1,M(g_2,p)}}\\
F(M(g_1+g_2,p)) \ar[rr]_{F(\mu_{g_1,g_2,p})}& &F(M(g_1,M(g_2,p))).
}
\]
\end{defn}

Let $(F,\gamma),(\overline{F},\overline{\gamma}):(\pic, M, \mu) \rightarrow (\pic',M',\mu')$ be two morphisms of left $\gic$-torsors.

\begin{defn}
A \textbf{morphism of morphisms of left $\gic$-torsors} 
$$\varphi: (F,\gamma) \Rightarrow (\overline{F},\overline{\gamma})$$
consists of a morphism of morphisms of {\bS}-stacks $\varphi: F \Rightarrow \overline{F}$ which is compatible with the natural isomorphisms $\gamma$ and $\overline{\gamma}$, i.e.
the following diagram commute for any $g \in \gic$ and $p \in \pic$
 \[\xymatrix{
 M'(g,F(p)) \ar[d]_{M'(id_\gic,\varphi(p))}\quad \ar[r]^{\gamma} & F(M(g,p))\ar[d]^{\varphi(M(g,p))}\\
M'(g,\overline{F}(p)) \ar[r]_{\overline{\gamma}}& \overline{F}(M(g,p)).
}
\]
\end{defn}

If the gr-$\bS$-stack $\gic$ acts of the right side instead of the left side, we get the definitions of right $\gic$-torsor, morphism of right $\gic$-torsors and morphism of morphisms of right $\gic$-torsors.

\begin{defn}\label{def:tor}
 A \textbf{$\gic$-torsor} $\pic=(\pic, M_r,M_l,\mu_r,\mu_l,\kappa)$ consists of 
 an {\bS}-stack of groupoids $\pic$ endowed with a structure of left $\gic$-torsor $(\pic,M_l,\mu_l)$ and a structure of right $\gic$-torsor $(\pic,M_r,\mu_r)$ which are compatible one with another. This compatibility is given by the existence of 
an isomorphism of morphisms of {\bS}-stacks $\kappa: M_l \circ (id_\gic \times M_r)  \Rightarrow  M_r \circ (M_l \times id_\gic )$, described by the functorial isomorphism 
$\kappa_{g_1,p,g_2}: M_l(g_1,M_r(p,g_2)) \rightarrow M_r(M_l(g_1,p),g_2)$ for any $g_1,g_2 \in \gic$ and $p \in \pic$, such that the following diagrams commute for any $g_1,g_2 \in \gic$ and $p \in \pic$
\[\xymatrix{
  M_l( g_1+g_2,M_r(p,g_3)) \ar[rr]^{\kappa} \ar[d]_{\mu_l}&  & M_r(M_l(g_1+g_2,p),g_3)   \ar[d]^{M_r(\mu_l,id_\gic)}\\
M_l( g_1,M_l(g_2, M_r(p,g_3)))\ar[dr]_\kappa &  &M_r(M_l(g_1, M_l(g_2,p),g_3)\\
   &M_l(g_1,M_r(M_l(g_2,p),g_3)) \ar[ur]_\kappa& 
}
\]
\[\xymatrix{
  M_l( g_1,M_r(p,g_2+g_3)) \ar[rr]^{\kappa} \ar[d]_{M_l(id_\gic,\mu_r)}&  & M_r(M_l(g_1,p),g_2+g_3)   \ar[d]^{\mu_r}\\
M_l( g_1,M_r(M_r(p,g_2),g_3))\ar[dr]_\kappa &  &M_r(M_r(M_l(g_1,p),g_2),g_3)\\
   &M_r(M_l(g_1,M_r(p,g_2)),g_3) \ar[ur]_\kappa& 
}
\]
\end{defn}

\begin{ex} 
The strictly commutative Picard $\bS$-stack $\gic$ is endowed with a structure of $\gic$-torsor: the morphism of $\bS$-stacks $ +: \gic \times \gic \rightarrow \gic$ and the natural isomorphism of associativity $\sigma$ furnish a structure of left $\gic$-torsor and a structure of right $\gic$-torsor. The natural isomorphism of commutativity $\tau$ implies that these two structures are compatible, i.e. $\gic$ is in fact a $\gic$-torsor. We will call $\gic$ the trivial $\gic$-torsor.
\end{ex}

\begin{defn}\label{def:mortor}
A \textbf{morphism of $\gic$-torsors} 
$$(F,\gamma_r,\gamma_l):(\pic, M_r,M_l,\mu_r,\mu_l,\kappa) \rightarrow (\pic', M_r',M_l',\mu'_r,\mu'_l,\kappa')$$
 consists of 
\begin{itemize}
	\item a morphism of {\bS}-stacks $F: \pic \rightarrow \pic'$,
	\item two isomorphisms of morphisms of $\bS$-stacks
$(\gamma_l)_{g,p}: M'_l(g,F(p)) \rightarrow F(M_l(g,p))$ and 
$(\gamma_r)_{ p,g}: M'_r(F(p),g) \rightarrow F(M_r(p,g))$ for any $g \in \gic$ and $p \in \pic$,
\end{itemize}
such that $(F,\gamma_r):(\pic, M_r,\mu_r) \rightarrow (\pic', M_r',\mu'_r)$ and $(F,\gamma_l):(\pic, M_l,\mu_l) \rightarrow (\pic', M_l',\mu'_l)$ are morphisms of right respectively left $\gic$-torsors, and such that $\gamma_r$ and $\gamma_l$ are compatible with $\kappa$ and $\kappa'$, i.e. the following diagram commutate for any $g_1,g_2 \in \gic$ and $p \in \pic$
\[\xymatrix{
  M_l'(g_1,F(M_r(p,g_2))) \ar[r]^{\gamma_l}  & F(M_l(g_1,M_r(p,g_2))) \ar[r]^{F(\kappa)} & F(M_r(M_l(g_1,p),g_2))   \\
M_l'(g_1,M_r'(F(p),g_2))  \ar[u]^{M_l'(id_\gic,\gamma_r)} \ar[r]^{\kappa'} & M_r'(M_l'(g_1,F(p)),g_2)\quad \ar[r]^{M_r'(\gamma_l,id_\gic)} &\quad M_r'(F(M_l(g_1,p)),g_2)\ar[u]^{\gamma_r}.
}
\]
\end {defn}

Let $(F,\gamma_r,\gamma_l),(\overline{F},\overline{\gamma}_r, \overline{\gamma}_l):(\pic, M_r,M_l,\mu_r,\mu_l,\kappa) \rightarrow (\pic', M_r',M_l',\mu'_r,\mu'_l,\kappa')$ be two morphisms of $\gic$-torsors.

\begin{defn}\label{def:mormortor}
A \textbf{morphism of morphisms of $\gic$-torsors} 
$$\varphi: (F,\gamma_r,\gamma_l) \Rightarrow (\overline{F},\overline{\gamma}_r, \overline{\gamma}_l)$$
consists of a morphism of morphisms of {\bS}-stacks $\varphi: F \Rightarrow \overline{F}$ such that $\varphi: (F,\gamma_l) \Rightarrow (\overline{F}, \overline{\gamma}_l)$ and $\varphi: (F,\gamma_r) \Rightarrow (\overline{F},\overline{\gamma}_r)$ are morphisms of morphisms of left respectively right $\gic$-torsors, i.e. such that $\varphi: F \Rightarrow \overline{F}$ is compatible with the natural isomorphisms $\gamma_r, \overline{\gamma}_r$ and with the natural isomorphisms $\gamma_l, \overline{\gamma}_l.$
\end{defn}

$\gic$-torsors form a 2-category ${\cTorsor}(\gic)$
 where
\begin{enumerate}
	\item the objects are $\gic$-torsors,
	\item the 1-arrows are morphisms of $\gic$-torsors,
	\item the 2-arrows are morphisms of morphisms of $\gic$-torsors.
\end{enumerate}

Now we generalize to complexes of sheaves concentrated in degree -1 and 0, the classical notion of "torsor". 
Let $G=[d^G:G^{-1} \rightarrow G^0]$ be a complex of $\cK^{[-1,0]}(\bS)$, i.e. a complex of abelian sheaves on $\bS$ concentrated in degrees -1 and 0:
 
\begin{defn}
An left $G$-\textbf{torsor} $P=(P,m,\mu)$ consists of 
\begin{itemize}
	\item a complex $P=[d^P: P^{-1} \rightarrow P^0]$ of sheaves of sets on $\bS$ concentrated in degrees -1 and 0,
	\item a morphism of complexes $m: G \times P \rightarrow P$, i.e. a commutative diagram
	\[\xymatrix{
 G^{-1} \times P^{-1}  \ar[r]^{\qquad m^{-1}} \ar[d]_{d^G \times d^P}  &  P^{-1} \ar[d]^{d^P}\\
 G^0 \times P^0 \ar[r]_{\qquad m^0} & P^0
}
\]
	\item an homotopy $\mu$ between the two morphisms of complexes  $m \circ (+ \times id_P)$ and $m \circ (id_G \times m) $ from $G \times G\times P$ to $P$ (here $+: G \times G \rightarrow G$ is the group law underlying the complex of abelian sheaves $G$),
\end{itemize}
 such that the following conditions are satisfied:

	(i) the homotopy $\mu$ is compatible with the associative law of the complex of abelian sheaves $G$, i.e. the following diagram commute 
 \[\xymatrix{
  m \circ(+ \circ(+ \times id_G) \times id_P) \ar@{=}[rr] \ar[d]_\mu&  & m \circ(+ \circ(id_G \times +) \times id_P)  \ar[d]^\mu\\
m \circ (+\times m) \ar[dr]_\mu &  &m \circ (id_G \times m \circ (+ \times id_P))\ar[dl]^\mu\\
   &m \circ (id_G \times m) \circ (id_G \times id_G \times m) & 
}
\]

     (ii) the restriction of the morphism of complexes $m$ to $[id:e_{G^{-1}} \rightarrow e_{G^0}] \times \pic$ is homotopic to the identity (here $e_{G^{-1}}$ and $e_{G^0}$ denote the neutral sections of the abelian sheaves $G^{-1}$ and $G^0$ respectively). Moreover we require that this restriction of $m$ is compatible with the homotopy $\mu$, i.e. the following diagram commutes for any $g^i \in G^i$ and $p^i \in P^i$ for $i=-1,0$
  \[\xymatrix{
  m^i( g^i+e_{G^i},p^i) \ar[rr]^\mu \ar@{=}[dr] & & m^i(g^i,m^i(e_{G^i},p^i)) \ar[dl] \\
 &m^i(g^i,p^i) &
}
\]

	(iii) the morphism of complexes $(m,pr_2): G \times P \rightarrow P \times P$ is a quasi-isomorphism (here $pr_2:G \times P \rightarrow P $ denotes the second projection),
	
	(iv) it exists a covering sieve $R$ of the site {\bS} such that for any object $U$ of $R$ the sets of sections $P^{-1}(U)$ and $P^0(U)$ are not empty.
\end{defn}

\begin{defn}
A \textbf{morphism of left $G$-torsors} 
$$(f,\gamma):(P, m, \mu) \rightarrow (P',m',\mu')$$
 consists of 
\begin{itemize}
	\item a morphism of complexes $f: P \rightarrow P'$ and
	\item an homotopy $\gamma: m' \circ (id_G \times f) \approx f \circ m $, 
\end{itemize}
 which are compatible with the homotopies $\mu$ and $\mu'$, i.e. the following diagram commute 
 \[\xymatrix{
 m' \circ (+ \times f) \ar[d]_{\gamma}\quad \ar[r]^{\mu'} &  \quad m' \circ (id_G \times m' \circ (id_G \times f)) \quad \ar[r]^{m'(id_G,\gamma)}& \quad m' \circ (id_G \times f \circ m) \ar[d]^{\gamma}\\
f \circ m \circ (+ \times id_P) \ar[rr]_{f(\mu)}& &f \circ m \circ (id_G \times m).
}
\]
\end{defn}

Let $(f,\gamma),(\overline{f},\overline{\gamma}):(P, m, \mu) \rightarrow (P',m',\mu')$ be two morphisms of left $\gic$-torsors.

\begin{defn}
A \textbf{morphism of morphisms of left $G$-torsors} 
$$\varphi: (f,\gamma) \approx (\overline{f},\overline{\gamma})$$
consists of an homotopy $\varphi: f \approx \overline{f}$ which is compatible with the homotopies $\gamma$ and $\overline{\gamma}$, i.e.
the following diagram commute 
 \[\xymatrix{
 m'\circ (id_G \times f) \ar[d]_{m'(id_G,\varphi)}\quad \ar[r]^{\gamma} & f \circ m \ar[d]^{\varphi}\\
m' \circ (id_G \times \overline{f}) \ar[r]_{\overline{\gamma}}& \overline{f} \circ m .
}
\]
\end{defn}

If the complex $G$ acts of the right side instead of the left side, we get the definitions of right $G$-torsor, morphism of right $G$-torsors and morphism of morphisms of right $G$-torsors. 

\begin{defn}\label{def:torcomplexes}
A \textbf{G-torsor} consists of a complex $P=[d^P: P^{-1} \rightarrow P^0]$ of sheaves of sets on $\bS$ (concentrated in degrees -1 and 0) endowed with a structure of left $G$-torsor $(P,m_l,\mu_l)$ and a structure of right $G$-torsor $(P,m_r,\mu_r)$ which are compatible one with another. This compatibility is given by the existence of an homotopy $\kappa : m_l \circ (id_G \times m_r) \approx m_r \circ (m_l \times id_G )$ such that the following diagrams commute 
\[\xymatrix{
  m_l \circ (+ \times m_r) \ar[rr]^{\kappa} \ar[d]_{\mu_l}&  & m_r \circ (m_l \circ(+ \times id_P) \times id_G \ar[d]^{\mu_l})\\
m_l  \circ ( id_G \times m_l \circ (id_G \times m_r)) \ar[dr]_\kappa &  &m_r \circ ( m_l \circ (id_G \times m_l) \times id_G)\\
   & m_l  \circ ( id_G \times m_r \circ (m_l \times id_G )) \ar[ur]_\kappa& 
}
\]
\[\xymatrix{
  m_l \circ (id_G \times m_r \circ (id_P \times +)) \ar[rr]^{\kappa} \ar[d]_{\mu_r}&  & m_r \circ (m_l \times +)   \ar[d]^{\mu_r}\\
m_l \circ (id_G \times m_r \circ (m_r \times id_G))\ar[dr]_\kappa &  &m_r \circ (m_r \circ (m_l \times id_G)\times id_G)\\
   &m_r \circ (m_l \circ (id_G \times m_r) \times id_G) \ar[ur]_\kappa& 
}
\]
\end{defn}

\begin{rem}\label{rem:torcomplexes} If $G=[G^{-1} \stackrel{0}{\rightarrow} G^0]$, then a $G$-torsor consists of a $G^0$-torsor and a $G^{-1}$-torsor.
\end{rem}

\begin{ex} 
The complex $G \in \cK^{[-1,0]}(\bS)$ endowed the morphism of complexes $ +: G \times G \rightarrow G$ is a $G$-torsor. We will call $G$ the trivial $G$-torsor.
\end{ex}

\begin{defn}\label{def:mortorcomplexes}
A \textbf{morphism of $G$-torsors} 
$$(f,\gamma_r,\gamma_l):(P, m_r,m_l,\mu_r,\mu_l,\kappa) \rightarrow (P', m_r',m_l',\mu'_r,\mu'_l,\kappa')$$
 consists of 
\begin{itemize}
	\item a morphism of complexes $f: P \rightarrow P'$, and 
	\item two homotopies
$(\gamma_l): m'_l \circ (id_G \times f) \approx f \circ m_l$ and 
$(\gamma_r): m'_r \circ (f \times id_G ) \approx f \circ m_r$,
\end{itemize}
such that $(f,\gamma_r):(P, m_r,\mu_r) \rightarrow (P', m_r',\mu'_r)$ and $(f,\gamma_l):(P, m_l,\mu_l) \rightarrow (P', m_l',\mu'_l)$ are morphisms of right respectively left $\gic$-torsors, and such that $\gamma_r$ and $\gamma_l$ are compatible with $\kappa$ and $\kappa'$, i.e. the following diagram commutates
\[\xymatrix{
  m_l' \circ ( id_G \times f) \circ( id_G \times m_r) \ar[r]^{\gamma_l}  & f \circ m_l \circ( id_G \times m_r) \ar[r]^{\kappa} & f \circ m_r \circ (m_l \times id_G)   \\
m_l' \circ (id_G \times m'_r \circ (f \times id_G)) \ar[u]^{\gamma_r} \ar[r]^{\kappa'} & m_r' \circ (m'_l \circ (id_G \times f) \times id_G ) \quad \ar[r]^{\gamma_l} &\quad m_r'  \circ (f \circ m_l \times id_G)  \ar[u]^{\gamma_r}.
}
\]
\end {defn}

Let $(f,\gamma_r,\gamma_l),(\overline{f},\overline{\gamma}_r, \overline{\gamma}_l):(P, m_r,m_l,\mu_r,\mu_l,\kappa) \rightarrow (P', m_r',m_l',\mu'_r,\mu'_l,\kappa')$ be two morphisms of $G$-torsors.

\begin{defn}\label{def:mormortorcomplexes}
A \textbf{morphism of morphisms of $G$-torsors} 
$$\varphi: (f,\gamma_r,\gamma_l) \approx (\overline{f},\overline{\gamma}_r, \overline{\gamma}_l)$$
consists of an homotopy $\varphi: f \approx \overline{f}$ such that $\varphi: (f,\gamma_l) \approx (\overline{f}, \overline{\gamma}_l)$ and $\varphi: (f,\gamma_r) \approx (\overline{f},\overline{\gamma}_r)$ are morphisms of morphisms of left respectively right $G$-torsors, i.e. such that $\varphi: f \approx \overline{f}$ is compatible with the homotopies $\gamma_r, \overline{\gamma}_r$ and with the homotopies $\gamma_l, \overline{\gamma}_l.$
\end{defn}

\section{The 2-category of extensions of Picard stacks}

Let $F:\pic \rightarrow \qic$ be an additive functor between strictly commutative Picard $\bS$-stacks. Denote by $\mathbf{1}$
the strictly commutative Picard $\bS$-stack such that for any object $U$ of $\bS$,
$\mathbf{1}(U)$ is the category with one object and one arrow. By \cite{Be10} \S 3
the \textbf{kernel} of $F$, $\ker(F),$ is the fibered product $ \pic \times_\qic \mathbf{1}$ of $\pic$ and $ \mathbf{1}$ over $\qic$ via $F:\pic \rightarrow \qic$ and $ \mathbf{1}: \mathbf{1} \rightarrow \qic$, and the \textbf{cokernel} of $F$, $\coker(F),$ is the fibered sum $\mathbf{1} +^\pic \qic$ of $ \mathbf{1}$ and $\qic$ under $\pic$ via $F: \pic \rightarrow \qic$ and  $\mathbf{1}: \pic \rightarrow \mathbf{1} $.

Let $\pic$ and $\gic$ be two strictly commutative Picard $\bS$-stacks.

\begin{defn}\label{def:ext}
An \textbf{extension} $\eic=(\eic,I,J) $ of $\pic$ by $\gic$
\begin{equation}
\gic \stackrel{I}{\longrightarrow} \eic \stackrel{J}{\longrightarrow} \pic  
\end{equation}
consists of 
\begin{itemize}
	\item a strictly commutative Picard $\bS$-stack $\eic$,
	\item two additive functors $I:\gic \rightarrow \eic$ and $ J:\eic \rightarrow \pic$, and 
	\item an isomorphism of additive functors between the composite $J \circ I$ and the trivial additive functor: $J \circ I \cong 0,$
\end{itemize}
such that the following equivalent conditions are satisfied:
   
   	(a) $\pi_0(J): \pi_0(\eic) \rightarrow \pi_0(\pic)$ is surjective and $I$ induces an equivalence of strictly commutative Picard $\bS$-stacks between $\gic$ and $\ker(J);$
     
      (b) $\pi_1(I): \pi_1(\gic) \rightarrow \pi_1(\eic)$ is injective and $J$ induces an equivalence of strictly commutative Picard $\bS$-stacks between $\coker(I)$ and $\pic$. 
\end{defn}

Let $\pic, \gic, \pic'$ and $\gic'$ be strictly commutative Picard $\bS$-stacks. Let $\eic=(\eic,I,J)$ be an extension of $\pic$ by $\gic$ and let $\eic'=(\eic',I',J')$ be an extension of $\pic'$ by $\gic'$. 

\begin{defn}\label{def:morext}
A \textbf{morphism of extensions}
\[(F,G,H): \eic \longrightarrow \eic' \]
consists of 
\begin{itemize}
	\item three additive functors $F:\eic \rightarrow \eic', G: \pic \rightarrow \pic', H:\gic \rightarrow \gic'$, and 
	\item two isomorphisms of additive functors $J' \circ F \cong G \circ J$ and $ F \circ I \cong I'\circ H$,
\end{itemize}
which are compatible with the isomorphisms of additive functors  $J \circ I \cong 0$ and $J' \circ I' \cong 0$ underlying the extensions $\eic$ and $\eic'$, i.e. the composite 
\[ 0 \stackrel{\cong}{\longleftrightarrow} G \circ 0 \stackrel{\cong}{\longleftrightarrow} G \circ J \circ I  \stackrel{\cong}{\longleftrightarrow}  J' \circ F \circ I \stackrel{\cong}{\longleftrightarrow} J' \circ I'\circ H  \stackrel{\cong}{\longleftrightarrow}  0 \circ H \stackrel{\cong}{\longleftrightarrow} 0\]
should be the identity. 
\end{defn}

Let $(F,G,H),(\overline{F},\overline{G},\overline{H}): \eic \longrightarrow \eic'$ be two morphisms of extensions $\eic=(\eic,I,J)$ of $\pic$ by $\gic$ and $\eic'=(\eic',I',J')$  of $\pic'$ by $\gic'$. 

\begin{defn}\label{def:mormorext}
A \textbf{morphism of morphisms of extensions}
\[(F,G,H): (F,G,H) \Rightarrow (\overline{F},\overline{G},\overline{H}) \]
consists of three morphisms of additive functors $\alpha: F \Rightarrow \overline{F},
\beta : G \Rightarrow \overline{G}$ and $\gamma: H \Rightarrow \overline{H}$ which are compatible with the four isomorphisms of additive functors $J' \circ F \cong G \circ J, F \circ I \cong I'\circ H, J' \circ \overline{F} \cong \overline{G} \circ J$ and $\overline{F} \circ I \cong I'\circ \overline{H}$, i.e. the following diagrams commute for any $g \in \gic$ and $a \in \eic$
 \[\xymatrix{
 FI(g)\ar[d]_{\alpha(I(g))}\quad \ar[r]^{\cong} & I'H(g) \ar[d]^{I'(\gamma(g))}\\
\overline{F}I(g)  \ar[r]_{\cong}& I'\overline{H}(g).
}
\qquad  \qquad
\xymatrix{
 J'F(a)\ar[d]_{J'(\alpha(a))}\quad \ar[r]^{\cong} & GJ(a) \ar[d]^{\beta(J(a))}\\
J'\overline{F}(a)  \ar[r]_{\cong} & \overline{G}J(a).
}
\]
\end{defn}

Extensions of $\pic$ by $\gic$ form a 2-category ${\cExt}(\pic,\gic)$
 where
\begin{enumerate}
	\item the objects are extensions of $\pic$ by $\gic$,
	\item the 1-arrows are morphisms of extensions, 
	\item the 2-arrows are morphisms of morphisms of extensions.
\end{enumerate}

Let $P=[P^{-1} \stackrel{d^P}{\rightarrow}P^0]$ and $G=[G^{-1} \stackrel{d^G}{\rightarrow}G^0]$ be complexes of $\cK^{[-1,0]}(\bS)$ and let
 $F: st(G)  \rightarrow st(P)$ be an additive functor induced by a morphism of complexes $f=(f^{-1},f^0): G \rightarrow P$.
By \cite{Be10} Lemma 3.4,
the strictly commutative Picard $\bS$-stacks $\ker(F)$ and $\coker(F)$
correspond via the equivalence of categories~(\ref{st}) to the following complexes of $\cK^{[-1,0]}(\bS):$
\begin{eqnarray}
\nonumber [\ker(F)] & = & \tau_{ \leq 0}\big( MC(f)[-1]\big) = \big[G^{-1} ~ \stackrel{(f^{-1},-d^G)}{\longrightarrow} ~  \ker(d^P,f^0)\big]\\
\nonumber [\coker(F)] & = & \tau_{\geq -1}MC(f)  = \big[\coker(f^{-1},-d^G) ~ \stackrel{(d^P,f^0)}{\longrightarrow} ~ P^0 \big]
\end{eqnarray}
where $\tau$ denotes the good truncation and $MC(f)$ is the mapping cone of the morphism $f$.
 Therefore we get the following notion of extension for complexes in  $\cK^{[-1,0]}(\bS)$: let $P=[P^{-1} \stackrel{d^P}{\rightarrow}P^0]$ and $G=[G^{-1} \stackrel{d^G}{\rightarrow}G^0]$ be complexes of $\cK^{[-1,0]}(\bS)$.

\begin{defn}\label{def:extcomplexes}
An \textbf{extension} $E=(E,i,j)$ of $P$ by $G$
\[ G \stackrel{i}{\longrightarrow} E \stackrel{j}{\longrightarrow} P \]
consists of 
\begin{itemize}
	\item a complex $E$ of $\cK^{[-1,0]}(\bS)$,
	\item two morphisms of complexes  $i:G \rightarrow E$ and $ j:E \rightarrow P$ of $\cK^{[-1,0]}(\bS)$,
	\item an homotopy between $j \circ i$ and $0$,
\end{itemize}
such that the following equivalent conditions are satisfied:

	(a) ${\h}^{0}(j): {\h}^{0}(E) \rightarrow {\h}^{0}(P)$ is surjective and $i$ induces a quasi-iso\-mor\-phism between $G$ and $ \tau_{\leq 0} (MC(j)[-1])$;
 
      (b) ${\h}^{-1}(i): {\h}^{-1}(G) \rightarrow {\h}^{-1}(E)$ is injective and $j$ induces a quasi-iso\-mor\-phism between $ \tau_{\geq -1} MC(i)$ and $P$.
\end{defn}

\begin{rem}\label{rem:extcomplexes} If $G=[G^{-1} \stackrel{0}{\rightarrow} G^0]$ and $P=[P^{-1} \stackrel{0}{\rightarrow} P^0]$, then an extension of $P$ by $G$ consists of an extension of $P^0$ by $G^0$ and an extension of $P^{-1}$ by $G^{-1}.$
\end{rem}

\begin{rem}\label{rem:exact-ext} Consider a short exact sequence of complexes in $\cK^{[-1,0]}(\bS)$
\[0 \longrightarrow  K \stackrel{i}{\longrightarrow} L \stackrel{j}{\longrightarrow} M \longrightarrow 0.\]
It exists a distinguished triangle $K  \stackrel{i}{\rightarrow}L \stackrel{j}{\rightarrow} M \rightarrow +$ in $\cD(\bS)$, and $M$ is isomorphic to $MC(i)$ in $\cD(\bS)$. Therefore a short exact sequence of complexes 
in $\cK^{[-1,0]}(\bS)$ is an extension of complexes of $\cK^{[-1,0]}(\bS)$ according to the above definition.  
\end{rem}

\begin{rem}\label{rem:Z[I]exttor}
Let $G$ be a complex of $\cK^{[-1,0]}(\bS)$. If $I=[d^I: I^{-1} \rightarrow I^0]$ is a complex of sheaves of 
sets on $\bS$ concentrated in degrees -1 and 0, we denote by  
 ${\ZZ}[I]=[{\ZZ}[d^I]: {\ZZ}[I^{-1}] \rightarrow {\ZZ}[I^0]]$ the complex of abelian sheaves generated by $I$, 
where ${\ZZ}[I^i]$ is the abelian sheaf generated by $I^i$ for $i=-1,0$ (see \cite{SGA4} Expos\'e IV 11). By definition of ${\ZZ}[I]$, the functor
 \[ G \longrightarrow {\Hom}_{\ZZ}({\ZZ}[I],G)\]
 is isomorphic to the functor 
  \[ G \longrightarrow G(I)={\h}^0(I,G_I),\]
where $G_I$ is the fibered product $G \times_{\mathbf{E}} I$, with $\mathbf{E}=[id_\mathbf{e}: \mathbf{e} \rightarrow \mathbf{e}]$ and $\mathbf{e}$ the final object of the category of abelian sheaves on the site $\bS$ (note that $st(E)=\mathbf{1}).$
Taking the respective derived functors, for $i=-1,0,1$ we get the isomorphisms 
\[ {\Ext}^i({\ZZ}[I],G) \cong {\h}^i(I,G_I).\]
Hence by \cite{Be10} Theorem 0.1 and by \cite{B90} Proposition 6.2 we can conclude that the equivalence classes of extensions of 
${\ZZ}[I]$ by $G$ are in bijection with the equivalence classes of $G_I$-torsors over $I$.
\end{rem}

\section{Description of extensions of Picard stacks in terms of torsors}

Let $\pic$ and $\gic$ be two strictly commutative Picard $\bS$-stacks. Denote by $\mathbf{1}$
the strictly commutative Picard $\bS$-stack such that for any object $U$ of $\bS$,
$\mathbf{1}(U)$ is the category with one object and one arrow. Let $\wedge$ be the contracted product of $\gic$-torsors (see 6.7 \cite{B90}). If $K$ is a subset of a finite set $E$, $p_K: \pic^E  \rightarrow \pic^K$ is the projection to the factors belonging to $K$, and $ +_K: \pic^E  \rightarrow \pic^{E-K+1} $ is the group law $+: \pic \times \pic \rightarrow \pic$ on the factors belonging to $K$. If $\iota$ is a permutation of the set $E$, $Perm(\iota): \pic^E  \rightarrow \pic^{\iota(E)}$ is the permutation of the factors  according to $\iota$.
 Moreover let $Sym: \pic \wedge \gic \rightarrow \gic \wedge \pic  $ be the canonical isomorphism that exchange the factors and let $D: \pic  \rightarrow \pic \times  \pic$ be the diagonal morphism.

\begin{thm}\label{thm:ext-tor}
To have an extension $\eic$ of $\pic$ by $\gic$ is equivalent to have 
\begin{enumerate}
	\item a $\gic$-torsor $\eic$ over $\pic$;
	\item a trivialization $I$ of the pull-back $\mathbf{1}^*\eic$ of $\eic$ via the additive functor $\mathbf{1}: \mathbf{1} \rightarrow \pic$, i.e. $I: \gic \rightarrow \mathbf{1}^*\eic$ is an equivalence of $\gic$-torsors between the trivial $\gic$-torsor $\gic$ and the pull-back $\mathbf{1}^*\eic$;
	\item a morphism of $\gic$-torsors over $\pic \times \pic$
	$$M : p_1^* \; \eic \wedge p_2^* \; \eic \longrightarrow +^* \; \eic$$
	 whose restriction over $\mathbf{1} \times \mathbf{1}$ is compatible with the trivialization $I$ (i.e. $M(\mathbf{1}^*\eic,\mathbf{1}^*\eic)=\mathbf{1}^*\eic$);
	\item an isomorphism $\alpha$ of morphisms of $\gic$-torsors over $\pic \times \pic \times \pic$
\begin{equation}\label{eq:alpha}
	\xymatrix{
  p_1^* \; \eic \wedge p_2^* \; \eic \wedge p_3^* \; \eic \ar[r] \ar[d]  &    p_1^* \; \eic \wedge +^*_{23} \; \eic  \ar[d]  \ar@{=>}[dl]^\alpha \\
+^*_{12} \; \eic \wedge  p_3^* \; \eic \ar[r]  & +^*_{123} \; \eic
}  
\end{equation}
	 whose restriction over $\mathbf{1} \times \mathbf{1} \times \mathbf{1} $ is the identity, and whose pull-back over $\pic^4$ via the morphisms cited below satisfies the equality
	 \begin{equation}\label{ext-tor:1}
	 p^*_{123}\; \alpha \circ +^*_{23} \; \alpha \circ p^*_{234} \; \alpha = +^*_{12} \; \alpha \circ +^*_{34}\; \alpha;
	 \end{equation}
	 	\item an isomorphism $\chi: M \Rightarrow Sym \circ M $ of morphisms of $\gic$-torsors over $\pic \times \pic $ 
	\begin{equation}\label{eq:chi}
	\xymatrix{
  p_1^* \; \eic \wedge p_2^* \; \eic  \ar[rr]^{M} \ar[d]_{Sym}  &  &  +^* \; \eic  \\
p_2^* \; \eic \wedge p_1^* \; \eic  \ar[urr]_{M}  \ar@{=>}[ur]^\chi  &  &
}   
\end{equation}
 whose pull-back $D^* \chi$ via the diagonal morphism $D:\pic \rightarrow \pic \times \pic$ is the identity, whose composite with itself $\chi \circ \chi$ is the identity, and whose pull-back over $\pic^3$ via the morphisms quoted below satisfies the equality
	 \begin{equation}\label{ext-tor:2}
	 Perm(132)^*\; \alpha \circ +^*_{23} \; \chi \circ \alpha =  p^*_{13} \; \chi \circ Perm(12)^* \; \alpha \circ p^*_{12} \; \chi. 
	 \end{equation}
\end{enumerate}
\end{thm}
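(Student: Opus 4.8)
The plan is to construct a 2-functor in each direction between $\cExt(\pic,\gic)$ and the 2-category of tuples $(\eic, I, M, \alpha, \chi)$ described in the statement, and to show that these two 2-functors are quasi-inverse. The guiding principle is that an extension $\eic$ of $\pic$ by $\gic$ is nothing but a strictly commutative Picard $\bS$-stack $\eic$ equipped with an additive projection $J:\eic \to \pic$ whose kernel is $\gic$; all the torsor data $(M,\alpha,\chi)$ will be the group law $+$ of $\eic$, the associativity constraint $\sigma$ and the commutativity constraint $\tau$ of $\eic$, merely re-expressed through the fibration $J$.

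First I would pass from an extension to a tuple. Given $\gic \stackrel{I}{\to} \eic \stackrel{J}{\to} \pic$, condition (a) of Definition \ref{def:ext} (surjectivity of $\pi_0(J)$ together with $\gic \cong \ker(J)$) furnishes exactly the local triviality and the $\gic$-action needed to make $J:\eic \to \pic$ a $\gic$-torsor in the sense of Definition \ref{def:tor}: the left and right actions are $g \mapsto I(g)+(-)$ and $g \mapsto (-)+I(g)$, their common identification being supplied by the commutativity constraint $\tau$ and the compatibility $\kappa$ by the associativity constraint $\sigma$ of $\eic$. This yields datum (1); the trivialization of $\mathbf{1}^*\eic = \ker(J)$ is the functor $I$ itself, giving (2). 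Since $J$ is additive, the group law $+$ of $\eic$ covers $+$ of $\pic$ and therefore descends to a morphism of $\gic$-torsors $M : p_1^* \eic \wedge p_2^* \eic \to +^* \eic$ over $\pic \times \pic$, which is datum (3); its restriction over $\mathbf{1} \times \mathbf{1}$ is compatible with $I$ because the neutral object of $\eic$ maps to that of $\pic$. The constraints $\sigma$ and $\tau$ of $\eic$ then produce the 2-isomorphisms $\alpha$ of (\ref{eq:alpha}) and $\chi$ of (\ref{eq:chi}).

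The heart of the argument is the translation of the coherence axioms of the Picard structure on $\eic$ into the stated identities. I would check that the pentagon (\ref{pic-1}) for $\sigma$ becomes the cocycle identity (\ref{ext-tor:1}) for $\alpha$ over $\pic^4$; that strict commutativity $\tau_{a,a}=\mathrm{id}$ (\ref{pic-2}) becomes $D^*\chi=\mathrm{id}$; that the coherence of $\tau$ (\ref{pic-3}) becomes $\chi \circ \chi = \mathrm{id}$; and that the hexagonal compatibility of $\sigma$ and $\tau$ (\ref{pic-4}) becomes the identity (\ref{ext-tor:2}) over $\pic^3$. Conversely, starting from an abstract tuple $(\eic, I, M, \alpha, \chi)$ -- where $\eic$ is a priori only an $\bS$-stack carrying a $\gic$-torsor structure -- I would reconstruct a strictly commutative Picard structure on $\eic$: the group law is assembled from $M$ and the torsor multiplication, its neutral object and the additive functor $I:\gic \to \eic$ come from the trivialization, the associativity and commutativity constraints are $\alpha$ and $\chi$, and the Picard axioms (\ref{pic-1})--(\ref{pic-4}) follow by reversing the dictionary above. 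One checks that the torsor projection $J$ is additive with kernel $\gic$, so condition (a) of Definition \ref{def:ext} holds and $(\eic,I,J)$ is an extension.

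I expect the main obstacle to be this verification of the higher coherences, and in particular the matching of the hexagon (\ref{pic-4}) with (\ref{ext-tor:2}): unlike the classical abelian-sheaf case of Grothendieck (\cite{SGA7} Expos\'e VII 1.1.6 and 1.2), where the torsor carries no internal structure and the gluing isomorphism is a mere isomorphism of torsors, here $M$ is itself a morphism of $\gic$-torsors and $\alpha,\chi$ are 2-arrows, so each identity is an equality of 2-morphisms obtained by a careful bookkeeping of how the pull-backs $p_K^*$, the partial group laws $+_K^*$ and the permutations $Perm(\iota)$ act on the contracted products $\wedge$. Finally I would promote both constructions to 2-functors, extending them to morphisms and to morphisms of morphisms of extensions (respectively of tuples) in the evident way, and verify that the two composites are 2-equivalent to the identities, thereby obtaining the claimed equivalence of 2-categories.
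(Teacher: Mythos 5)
Your proposal follows essentially the same route as the paper's proof: both directions are obtained by translating the strictly commutative Picard structure on $\eic$ (group law, neutral object, $\sigma$, $\tau$) into the torsor data $(M,I,\alpha,\chi)$ via the fibration $J$, with exactly the dictionary you describe — pentagon $\leftrightarrow$ (\ref{ext-tor:1}), strictness of $\tau$ $\leftrightarrow$ $D^*\chi=\mathrm{id}$, coherence of $\tau$ $\leftrightarrow$ $\chi\circ\chi=\mathrm{id}$, hexagon $\leftrightarrow$ (\ref{ext-tor:2}). The only cosmetic difference is that you phrase the result directly as a 2-equivalence (which the paper defers to the corollary), and you leave implicit a few small reconstructions the paper spells out, such as obtaining $l_a$, $r_a$ and the inverse functor $-$ from the restrictions of $M$ and $\alpha$ and from the anti-diagonal pull-back of $M$.
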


\begin{proof} I) Starting from an extension $\eic=(\eic,I,J)$ of $\pic$ by $\gic$ we will construct the data $\eic,I,M,\alpha,\chi$ given in (1)-(5).
Via the additive functor $I: \gic \rightarrow \eic$, the strictly commutative Picard 
$\bS$-stack $\gic$ acts on the left side and on the right side of $\eic$, furnishing a structure of $\gic$-torsor to $\eic$. Since the additive functor $J: \eic \rightarrow \pic$ induces a surjection $\pi_0(J): \pi_0(\eic) \rightarrow \pi_0(\pic)$ on the $\pi_0$, $\eic$ is in fact a $\gic$-torsor over $\pic$ and so we get (1).
 By definition, $\ker(J)$ is the pull-back $\mathbf{1}^*\eic$ of $\eic$ via $ \mathbf{1}:\mathbf{1} \rightarrow \pic$ and so the condition that $I$ induces an equivalence of strictly commutative Picard $\bS$-stacks between $\gic$ and $\ker(J)$ is equivalent to (2). The existence for any $g \in \gic$ and $a,b \in \eic$ of the associative condition $\sigma: (a+g) +b \cong a+(g +b)$, which  satisfies the pentagonal axiom (\ref{pic-1}), implies that the morphism of $\bS$-stacks
$+: \eic \times \eic \rightarrow \eic$ factorizes through a morphism of $\gic$-torsors over $\pic \times \pic$, $M : p_1^* \; \eic \wedge p_2^* \; \eic \longrightarrow +^* \; \eic$. The neutral object $e$ with its two natural isomorphisms (\ref{eds}) forces the restriction of $M$ over $\mathbf{1} \times \mathbf{1}$ to be compatible with the trivialization $I$. Now the existence for any $a,b, c \in \eic$ of the isomorphism of associativity $\sigma: (a+b) +c \cong a+(b +c)$ implies  the isomorphism $\alpha$ (4). The compatibility of the isomorphism of associativity $\sigma$ with the neutral object (\ref{pic-5}) forces the restriction of $\alpha$ over $\mathbf{1} \times \mathbf{1} \times \mathbf{1} $ to be the identity. Moreover the pentagonal axiom (\ref{pic-1}) satisfied by $\sigma$ is equivalent to the equality (\ref{ext-tor:1}). 
The functorial isomorphism of commutativity $\tau: a+b \cong a+b$ for any $a,b \in \eic$ gives the existence of the isomorphism $\chi$ (5). 
The condition that $\tau_{a,a}$ is the identity for any $ a \in \pic$ (\ref{pic-2}) 
forces the pull-back $D^* \chi$ to be the identity. The coherence condition for $\tau$ (\ref{pic-3}) furnishes that the composite $\chi \circ \chi$ is the identity.
Moreover the hexagonal axiom (\ref{pic-4}) satisfied by $\sigma$ and $\tau$ is equivalent to the equality (\ref{ext-tor:2}). 
\\
II) Now suppose we have the data $\eic,I,M,\alpha,\chi$ given in (1)-(5). We will show that the $\gic$-torsor $\eic$ over $\pic$ is a strictly commutative Picard $\bS$-stack endowed with a structure of extension of $\pic$ by $\gic$. The morphism of $\gic$-torsors over $\pic \times \pic$,
	$M : p_1^* \; \eic \wedge p_2^* \; \eic \longrightarrow +^* \; \eic$
defines a group law $+: \eic \times \eic \rightarrow \eic$ on the $\bS$-stack of groupoids $\eic$. 
The isomorphism $\alpha$ gives the natural isomorphism of associativity $\sigma$ (\ref{ass}). The image of the neutral object of $\gic$ via the trivialization $I: \gic \rightarrow \mathbf{1}^*\eic$ furnishes a neutral object in the pull-back $\mathbf{1}^*\eic$ and so via the projection $\mathbf{1}^*\eic \rightarrow \eic$ we get a neutral object $e$ in $\eic$ (in other words, the neutral object of $\eic$ is the composite $\gic \rightarrow \mathbf{1}^*\eic \rightarrow \eic$). 
The condition $M(\mathbf{1}^*\eic,\mathbf{1}^*\eic)=\mathbf{1}^*\eic$ implies that $e+e \cong e$.
For any $a \in \eic$, the restriction of the morphism of $\gic$-torsors $M$ to $\pic \times \mathbf{1}$ furnishes a $b \in \eic$ and an isomorphism $b+e \cong a$. The restriction of the isomorphism $\alpha$ to $\pic \times \mathbf{1} \times \mathbf{1} $ determines for each $b \in \eic$ an isomorphism of associativity $(b+e)+e \cong b+(e+e)$. Since $e+e \cong e$, for any $a \in \eic$ we get the isomorphism $r_a: a+e \cong a$ (\ref{eds}). In an analogous way we get the natural isomorphism $l_a: e+a \cong a$.
The fact that the restriction of $\alpha$ over $\mathbf{1} \times \mathbf{1} \times \mathbf{1} $ is the identity means that $\sigma$ is compatible with the neutral object $e$ (\ref{pic-5}). Moreover the equality (\ref{ext-tor:1}) satisfied by 
$\alpha$ is equivalent to the pentagonal axiom (\ref{pic-1}) satisfied by $\sigma$.
The isomorphism $\chi$
furnishes the natural isomorphism of commutativity $\tau$ (\ref{com}).
Since the pull-back $D^* \chi$ of $\chi$ via the diagonal morphism $D:\pic \rightarrow \pic \times \pic$ is the identity, $\tau_{a,a}$ is the identity $\forall a \in \pic$ (\ref{pic-2}). The condition  $\chi \circ \chi = id$ implies 
 the coherence condition for $\tau$ (\ref{pic-3}).
 Moreover the equality (\ref{ext-tor:2}) satisfied by 
$\chi$ is equivalent to the hexagonal axiom (\ref{pic-4}) satisfied by $\sigma$ and $\tau$. Now the pull-back $\partial^* M$ of the morphism of $\gic$-torsors $M$ via
 the anti-diagonal morphism $\partial: \pic \rightarrow \pic \times \pic, a \mapsto (-a,a)$ furnishes an isomorphisms of $\gic$-torsors $-^*\eic \wedge \eic \cong \gic$ (here $-:\pic \rightarrow \pic$ is the morphism of $\bS$-stacks underlying $\pic$) and therefore we get a morphism of $\bS$-stacks $-:\eic \rightarrow \eic, a \mapsto -a$ with a natural isomorphism $o_a:a+(-a) \cong e$ (\ref{inv}). The isomorphism $\alpha$ furnishes the second natural isomorphism $c_{ab}:-(a+b) \cong (-a)+(-b)$ of (\ref{inv}).
Until now we have proved that $\eic$ is a strictly commutative Picard $\bS$-stack.\\
If $J: \eic \rightarrow \pic$ denotes the morphism of $\bS$-stacks which furnishes to $\eic$ the structure of torsor over $\pic$, J must be a surjection on the isomorphism classes of objects, i.e. $\pi_0(J): \pi_0(\eic) \rightarrow \pi_0(\pic)$ is surjective. Moreover the compatibility of $J$ with 
the morphism of $\gic$-torsors
	$M : p_1^* \; \eic \wedge p_2^* \; \eic \longrightarrow +^* \; \eic$
implies that $J$ is an additive functor.
As already observed, to have a trivialization $I$ of the pull-back $\mathbf{1}^*\eic$ is equivalent to have an equivalence of strictly commutative Picard $\bS$-stacks between $\gic$ and $\ker(J)$. We still denote $I$ the composite $\gic \cong \mathbf{1}^*\eic \rightarrow \eic$ where the last arrow is the projection $\mathbf{1}^*\eic = \eic \times_\pic \mathbf{1} \rightarrow \eic$. Clearly $I$ is an additive functor. We can conclude that $(\eic, I,J)$ is an extension of $\pic$ by $\gic$.
\end{proof}

As a corollary we get the following statement whose proof is left to the reader

\begin{cor} With the notations of the above Theorem, it exists 
an equivalence of 2-categories between 
the 2-category ${\cExt}(\pic,\gic)$ of extensions of $\pic$ by $\gic$ and 
the 2-category consisting of the data $(\eic,I,M,\alpha,\chi)$.
\end{cor}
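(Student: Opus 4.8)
The plan is to promote the object-level correspondence of Theorem~\ref{thm:ext-tor} to a 2-functor and then verify that it is an equivalence of 2-categories. Since the statement refers to ``the 2-category consisting of the data $(\eic,I,M,\alpha,\chi)$'' without spelling out its higher cells, the first task is to fix those cells so that they are the torsor-theoretic shadow of the arrows of ${\cExt}(\pic,\gic)$. Explicitly, I would declare a 1-arrow from $(\eic,I,M,\alpha,\chi)$ to $(\eic',I',M',\alpha',\chi')$ to be a morphism of $\gic$-torsors $F:\eic \to \eic'$ over $\pic$ together with an isomorphism $F\circ I \cong I'$ and an isomorphism $+^*F \circ M \cong M' \circ (p_1^*F \wedge p_2^*F)$ that is required to be compatible with $\alpha,\alpha'$ and with $\chi,\chi'$; a 2-arrow is a morphism of morphisms of $\gic$-torsors compatible with these isomorphisms. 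On objects the 2-functor $\Phi:{\cExt}(\pic,\gic)\to\{(\eic,I,M,\alpha,\chi)\}$ is precisely the construction carried out in part~I of the proof of Theorem~\ref{thm:ext-tor}.

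Next I would define $\Phi$ on arrows by the same dictionary used in part~I. A 1-arrow of ${\cExt}(\pic,\gic)$ is a morphism of extensions $(F,G,H)$ (taking $G=\mathrm{id}_{\pic}$ and $H=\mathrm{id}_{\gic}$, as is appropriate for arrows internal to ${\cExt}(\pic,\gic)$), i.e.\ an additive functor $F:\eic\to\eic'$ carrying the isomorphisms $J'\circ F\cong J$ and $F\circ I\cong I'$. Compatibility of $F$ with $J,J'$ and with the $\gic$-actions supplied by $I,I'$ shows that $F$ is a morphism of $\gic$-torsors over $\pic$, and $F\circ I\cong I'$ gives compatibility with the trivialisations. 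The additivity isomorphism $\sum_{a,b}:F(a+b)\stackrel{\cong}{\to}F(a)+F(b)$ translates, exactly as $+$ produced $M$ in part~I, into the required compatibility of $F$ with $M,M'$; the coherence of $\sum$ with the associativity constraints $\sigma,\sigma'$ yields compatibility with $\alpha,\alpha'$, and its coherence with the commutativity constraints $\tau,\tau'$ yields compatibility with $\chi,\chi'$. A 2-arrow is a morphism of additive functors, i.e.\ a morphism of morphisms of $\bS$-stacks compatible with the two $\sum$'s, which is nothing but a morphism of morphisms of $\gic$-torsors compatible with $M,\alpha,\chi$; this defines $\Phi$ on 2-arrows, and functoriality with respect to the composition and unit functors is immediate from the corresponding property of part~I.

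To conclude that $\Phi$ is an equivalence of 2-categories I would check essential surjectivity on objects together with, for every pair of objects, that the induced functor on the categories of morphisms is an equivalence of categories. Essential surjectivity on objects is exactly part~II of the proof of Theorem~\ref{thm:ext-tor}: from any datum $(\eic,I,M,\alpha,\chi)$ one reconstructs a strictly commutative Picard structure on $\eic$ exhibiting it as an extension of $\pic$ by $\gic$, and this reconstruction is a quasi-inverse to $\Phi$ on objects. On each Hom-category, the translations of the previous paragraph are reversible by the very same dictionary---the group-law, associativity and commutativity data of an additive functor are literally the $M$-compatibility, $\alpha$-compatibility and $\chi$-compatibility data---so $\Phi$ induces a bijective translation of the defining data of 1-arrows and 2-arrows, hence an equivalence (indeed an isomorphism) of the categories of morphisms. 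Together these two facts give the desired equivalence of 2-categories.

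The main obstacle is bookkeeping rather than conceptual novelty: one must verify that each coherence condition appearing in Definitions~\ref{def:morext} and~\ref{def:mormorext} corresponds, diagram for diagram, to a coherence condition for the associated torsor morphism relative to $M,\alpha,\chi$. In particular the compatibility of a morphism of extensions with the isomorphisms $J\circ I\cong 0$ on source and target must be matched with the restriction of the $M$-compatibility over $\mathbf{1}\times\mathbf{1}$, and one must check that the pentagon- and hexagon-type identities~(\ref{ext-tor:1}) and~(\ref{ext-tor:2}) are preserved under the translation of arrows. Since all of this structure is already pinned down by Theorem~\ref{thm:ext-tor}, the verification is a careful but routine unwinding of the definitions, which is why the corollary may reasonably be stated with its proof left to the reader.
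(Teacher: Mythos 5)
Your proposal is correct and follows exactly the route the paper intends: the paper leaves this proof to the reader, and the expected argument is precisely to extend the two constructions in parts~I and~II of the proof of Theorem~\ref{thm:ext-tor} to 1-arrows and 2-arrows (translating the additivity isomorphism $\sum$ and its coherences into $M$-, $\alpha$- and $\chi$-compatibilities) and to check equivalence on objects and on each category of morphisms. Your explicit specification of the higher cells of the target 2-category, and the convention that arrows of ${\cExt}(\pic,\gic)$ are taken over the identities of $\pic$ and $\gic$, are the natural readings and fill in exactly what the paper omits.
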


Let $G=[d^G: G^{-1} \rightarrow G^0]$ be complexes of $\cK^{[-1,0]}(\bS)$.  If $\mathbf{e}$ denotes the final object of the category of abelian sheaves on the site $\bS$, the complex $\mathbf{E}=[id_\mathbf{e}: \mathbf{e} \rightarrow \mathbf{e}]$ corresponds to the strictly Picard $\bS$-stack $\mathbf{1}$ via the equivalence of category (\ref{st}): $st(\mathbf{E})=\mathbf{1}$.
 Let $P=[d^P:P^{-1} \rightarrow P^0]$ and $ Q=[d^Q: Q^{-1} \rightarrow Q^0]$ are two $G$-torsors the \textbf{contracted product} $P \wedge^G Q$ is the $G$-torsor
 $[d^P \wedge^{d^G} d^Q: P^{-1} \wedge^{G^{-1}} Q^{-1} \rightarrow P^{0} \wedge^{G^0} Q^{0}]$,
where $P^{i} \wedge^{G^i} Q^{i}$ is the contracted product of $P^i$ and $Q^i$ (for $i=-1,0$) and $d^P \wedge^{d^G} d^Q$ is induced by $d^P \times d^Q:P^{-1} \times Q^{-1} \rightarrow P^{0} \times Q^{0}$ (see 1.3 Chapter III \cite{G}).
 If $K$ is a subset of a finite set $F$, $p_K: P^F  \rightarrow P^K$ is the projection to the factors belonging to $K$, and $ +_K: P^F  \rightarrow P^{F-K+1} $ is the group law $+:P \times P \rightarrow P$ on the factors belonging to $K$. If $\iota$ is a permutation of the set $E$, $Perm(\iota): P^F  \rightarrow P^{\iota(F)}$ is the permutation of the factors  according to $\iota$.
 Moreover let $sym: P \wedge P \rightarrow P \wedge P  $ be the canonical isomorphism that exchange the factors and let $d: P \rightarrow P \times  P$ be the diagonal morphism.
As a consequence of \ref{thm:ext-tor} we have the following

\begin{cor}\label{cor:ext-torcomplexes}
To have an extension $E$ of $P$ by $G$ is equivalent to have 
\begin{enumerate}
	\item a $G$-torsor $E$ over $P$;
	\item a trivialization $i$ of the pull-back $\mathbf{1}^*E$ of $E$ via the morphism of complexes $\mathbf{1}: \mathbf{E} \rightarrow P$, i.e. $i: G \rightarrow \mathbf{1}^*E$ is a quasi-isomorphism between the trivial $G$-torsor $G$ and the pull-back $\mathbf{1}^*E$;
	\item a morphism of $G$-torsors over $P \times P$
	$$m : p_1^* \; E \wedge p_2^* \; E \longrightarrow +^* \; E$$
	 whose restriction over $\mathbf{E} \times \mathbf{E}$ is compatible with the trivialization $i$ (i.e. $m(\mathbf{1}^*E,\mathbf{1}^*E)=\mathbf{1}^*E$);
	\item an isomorphism $\alpha$ of morphisms of $G$-torsors over $P \times P \times P$
	\[\xymatrix{
  p_1^* \; E \wedge p_2^* \; E \wedge p_3^* \; E \ar[r] \ar[d]  &    p_1^* \; E \wedge +^*_{23} \; E  \ar[d]  \ar@{~>}[dl]^\alpha \\
+^*_{12} \; E \wedge  p_3^* \; E \ar[r]  & +^*_{123} \; E
}   
\]	
	 whose restriction over $\mathbf{E} \times \mathbf{E} \times \mathbf{E} $ is the identity, and whose pull-back over $P^4$ via the morphisms cited below satisfies the equality
	 \[
	 p^*_{123}\; \alpha \circ +^*_{23} \; \alpha \circ p^*_{234} \; \alpha = +^*_{12} \; \alpha \circ +^*_{34}\; \alpha;
	 \]
	 	\item an isomorphism $\chi: m \approx sym \circ m$ of morphisms of $G$-torsors over $P \times P $ 
	\[\xymatrix{
  p_1^* \; E \wedge p_2^* \; E  \ar[rr]^{m} \ar[d]_{sym}  &  &  +^* \; E  \\
p_2^* \; E \wedge p_1^* \; E  \ar[urr]_{m}  \ar@{~>}[ur]^\chi  &  &
}   
\]
 whose pull-back $d^* \chi$ via the diagonal morphism $d:P \rightarrow P \times P$ is the identity, whose composite with itself $\chi \circ \chi$ is the identity, and whose pull-back over $P^3$ via the morphisms quoted below satisfies the equality
	 \[
	 Perm(132)^*\; \alpha \circ +^*_{23} \; \chi \circ \alpha =  p^*_{13} \; \chi \circ Perm(12)^* \; \alpha \circ p^*_{12} \; \chi. 
	 \]
\end{enumerate}
\end{cor}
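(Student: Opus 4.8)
The plan is to deduce the statement from Theorem~\ref{thm:ext-tor} by transporting every piece of data and every condition across the equivalence of categories $st: \cD^{[-1,0]}(\bS) \to \bPicard(\bS)$ of~(\ref{st}) and its inverse $[\,\,]$. Concretely, I would set $\pic = st(P)$, $\gic = st(G)$ and $\eic = st(E)$, and check that the whole vocabulary occurring in Theorem~\ref{thm:ext-tor} — extension, $\gic$-torsor over a base, trivialization, pull-back, contracted product, the group law $+$, the projections, the permutations, the symmetry $Sym$ and the diagonal $D$ — is matched under $st$ by the corresponding complex-level vocabulary of Sections~2 and~3.

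First I would record the dictionary for the two structures. By Definition~\ref{def:extcomplexes}, which was modelled on the mapping-cone description of $\ker$ and $\coker$ (\cite{Be10} Lemma~3.4), $E$ is an extension of $P$ by $G$ in $\cK^{[-1,0]}(\bS)$ if and only if $\eic=st(E)$ is an extension of $\pic$ by $\gic$ in the sense of Definition~\ref{def:ext}; conversely one applies $[\,\,]$. Next I would verify that $st$ exchanges the notion of $G$-torsor of Definition~\ref{def:torcomplexes} with that of $\gic$-torsor of Definition~\ref{def:tor}: the morphisms of complexes $m_l,m_r$ become the morphisms of $\bS$-stacks $M_l,M_r$, the homotopies $\mu_l,\mu_r,\kappa$ become the natural isomorphisms $\mu_l,\mu_r,\kappa$ through the translation~(\ref{eq:homotopies}) between homotopies and morphisms of additive functors, and the quasi-isomorphism condition~(iii) becomes the equivalence condition~(iii). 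Likewise morphisms of $G$-torsors and morphisms of such morphisms go over to morphisms of $\gic$-torsors and morphisms of morphisms.

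Then I would invoke the compatibility of $st$ with the operations used to phrase the two statements. Those for the product, the fibered product and the fibered sum, namely $st(P\times_G Q)=st(P)\times_{st(G)}st(Q)$ and $st(P+^G Q)=st(P)+^{st(G)}st(Q)$, were recorded in Section~1; the analogous identity $st(P\wedge^G Q)=st(P)\wedge^{st(G)}st(Q)$ for the contracted product, together with the matching of the projections, the permutations $Perm(\iota)$, the diagonal $d$ and the symmetry $sym$ with their stack counterparts $Perm(\iota)$, $D$ and $Sym$, has to be checked in the same way. With this in hand, each of the five data $(E,i,m,\alpha,\chi)$ of the Corollary is precisely the $[\,\,]$-image of the corresponding datum $(\eic,I,M,\alpha,\chi)$ of Theorem~\ref{thm:ext-tor}: the trivialization $i$ corresponds to $I$, the morphism $m$ to $M$, and the isomorphisms $\alpha$ and $\chi$ to $\alpha$ and $\chi$. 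The normalisation over $\mathbf{E}\times\mathbf{E}$ (resp. $\mathbf{1}\times\mathbf{1}$), the cocycle identities, the relations $d^*\chi=\mathrm{id}$ (resp. $D^*\chi=\mathrm{id}$) and $\chi\circ\chi=\mathrm{id}$, and the hexagon relation all transport verbatim because $st$ respects composition and the listed operations. Since $st$ is an equivalence, the equivalence of data furnished by Theorem~\ref{thm:ext-tor} descends to $\cK^{[-1,0]}(\bS)$, which is exactly the assertion.

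The main obstacle I anticipate is the identification of $G$-torsors with $\gic$-torsors, and in particular of $\wedge^G$ with $\wedge$: the functor $st$ is a priori defined only on complexes of \emph{abelian} sheaves, whereas a $G$-torsor has components that are merely sheaves of \emph{sets}, so the dictionary must be extended by hand to torsors and their contracted products, checking that the torsor axioms (the pentagon for $\mu$ and the two compatibility hexagons for $\kappa$) correspond under the translation homotopy $\leftrightarrow$ 2-arrow. Once this extension of the dictionary is secured, the remaining verification of the individual diagrams is routine bookkeeping, which is why the statement is left to the reader.
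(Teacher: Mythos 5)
Your proposal is correct and follows exactly the route the paper intends: the corollary is stated as an immediate consequence of Theorem~\ref{thm:ext-tor}, obtained by transporting all data and conditions through the equivalence~(\ref{st}) and its inverse $[\,\,]$, using the complex-level notions of torsor, extension and contracted product set up in Sections~2 and~3 precisely to mirror the stack-level ones. You also correctly identify the one point requiring care (extending the dictionary to torsors whose components are only sheaves of sets), which is indeed the reason the paper phrases those definitions separately at the level of complexes.
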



\section{The 2-category of biextensions of Picard stacks}

Let $\mathbf{1}$ be the strictly commutative Picard $\bS$-stack 
such that for any object $U$ of $\bS$,
$\mathbf{1}(U)$ is the category with one object and one arrow. 
Let $\gic,\qic$ and $\pic$ be strictly commutative Picard $\bS$-stacks.
We consider on the fibered product $\gic \times_\mathbf{1} \pic$ the structure of 
"strictly commutative Picard $\bS$-stack over $\pic$" of the pull-back $\mathbf{1}^* \gic$ of $\gic$ via the additive functor $\pic \rightarrow \mathbf{1}$
\[    \xymatrix{
 \gic \times_\mathbf{1} \pic  \ar[r]  \ar[d]& \pic \ar[d]^{\mathbf{1}} \\
  \gic \ar[r]_{\mathbf{1}}  & \mathbf{1}.
}\] 
In this case we write $\gic \times_\mathbf{1} \pic =\gic_\pic$. 
On the other hand we can consider on the fibered product $\gic \times_\mathbf{1} \pic$ also the structure of
"strictly commutative Picard $\bS$-stack over $\gic$" of the pull-back $\mathbf{1}^* \pic$ of $\pic$ via the additive functor $\gic \rightarrow \mathbf{1}$.
In this case we write $\gic \times_\mathbf{1} \pic =\pic_\gic$. 
In this section, over $\pic$ we will consider the two strictly strictly commutative Picard $\bS$-stacks $\gic_\pic$ and $\qic_\pic$ and over $\qic$ we will consider the two strictly strictly commutative Picard $\bS$-stacks $\gic_\qic$ and $\pic_\qic$.\\
We identify $\gic_{\pic \times_{\mathbf{1}} \qic }$ as the pull-back
of $\gic_\pic$ via the projection $Pr_1: \pic \times_{\mathbf{1}} \qic \rightarrow \pic$, or as the pull-back
of $\gic_\qic$ via the projection $Pr_2: \pic \times_{\mathbf{1}} \qic \rightarrow \qic$.

Let $\gic,\qic$ and $\pic$ be strictly commutative Picard $\bS$-stacks. 

\begin{defn}\label{def:biext}
 A \textbf{biextension} of $(\pic,\qic)$ by $\gic$ is a $\gic_{\pic \times_{\mathbf{1}} \qic}$-torsor $\bic$ over $\pic \times_{\mathbf{1}} \qic$, endowed with a structure of extension of $\qic_\pic$
by $\gic_\pic$ and a structure of extension of $\pic_\qic$ by $\gic_\qic,$ which are compatible one with another.
\end{defn}

In oder to explain what it means for two extensions to be compatible we used the description of extensions in term of torsors furnished by Theorem \ref{thm:ext-tor}: denote by $(\bic_\pic,I^\qic,M^\qic,\alpha^\qic,\chi^\qic)$ and by
 $(\bic_\qic,I^\pic,M^\pic,\alpha^\pic,\chi^\pic)$ the data corresponding respectively to the extensions $\bic_\pic$ of $\qic_\pic$
by $\gic_\pic$ and $\bic_\qic$ of $\pic_\qic$ by $\gic_\qic$ underlying the biextension $\bic$. In particular, if $p_i^{\qic}: \qic_\pic \times \qic_\pic \rightarrow \qic_\pic $ 
(resp. $p_i^{\pic}: \pic_\qic \times \pic_\qic \rightarrow \pic_\qic $) 
are the projections ($i=1,2$) and $+^{\qic}:\qic_\pic \times \qic_\pic  \rightarrow \qic_\pic $ 
(resp. $+^{\pic}:\pic_\qic \times \pic_\qic  \rightarrow \pic_\qic $)
is the group law of $ \qic_\pic $
(resp. $ \pic_\qic $), 
$$M^\qic: p_1^{\qic \;*} \; \bic_\pic \wedge p_2^{\qic \;*} \; \bic_\pic \longrightarrow +^{\qic \;*} \; \bic_\pic \quad 
(\mathrm{resp.} \; \; M^\pic: p_1^{\pic \;*} \; \bic_\qic \wedge p_2^{\pic \;*} \; \bic_\qic \longrightarrow +^{\pic \;*} \; \bic_\qic)
$$ 
is a morphism of $\gic_\pic$-torsors over $\qic_\pic \times \qic_\pic $ 
(resp. of $\gic_\qic$-torsors over $\pic_\qic \times \pic_\qic$).\\
The two extensions $\bic_\pic$ of $\qic_\pic$ by $\gic_\pic$ 
and $\bic_\qic$ of $\pic_\qic$ by $\gic_\qic$ are \textbf{compatible} if it exists 
an isomorphism $\beta$ of morphisms of $\gic_{\pic \times_{\mathbf{1}} \qic}$-torsors over $(\pic \times_{\mathbf{1}} \qic) \times (\pic \times_{\mathbf{1}} \qic)$
	\begin{equation}
\xymatrix{
	& +^{\pic \;*} p_1^{\qic \;*} \bic \wedge +^{\pic \;*}p_2^{\qic \;*}\bic \ar[dr]^{M^\qic}&   \\
 (p_1^\pic,p_1^\qic)^*\bic \wedge (p_1^\qic,p_2^\pic)^*\bic \wedge
 (p_1^\pic,p_2^\qic)^*\bic \wedge (p_2^\pic,p_2^\qic)^*\bic  \ar[d]_{Sym} \ar[ur]^{M^\pic \wedge M^\pic}  \ar@{=>}[rr]^\beta & & +^{\qic \;*} +^{\pic \;*} \bic\\
(p_1^\pic,p_1^\qic)^*\bic \wedge (p_1^\pic,p_2^\qic)^*\bic\wedge
 (p_1^\qic,p_2^\pic)^*\bic \wedge (p_2^\pic,p_2^\qic)^*\bic \ar[dr]_{M^\qic \wedge M^\qic} & & \\
&+^{\qic \;*} p_1^{\pic \;*}\bic \wedge +^{\qic \;*} p_2^{\pic \;*}\bic \ar[uur]_{M^\pic}& 
}   
\label{eq:compbiextpic}
\end{equation}

 Let $\gic,\qic,\pic,\gic',\qic'$ and $\pic'$ be strictly commutative Picard $\bS$-stacks. Consider a biextension $\bic$ of $(\pic,\qic)$ by $\gic$ and a biextension $\bic'$ of $(\pic',\qic')$ by $\gic'$.

\begin{defn}\label{def:morbiext}
A \textbf{morphism of biextensions}
\[(F,U,V,W):\bic \longrightarrow \bic' \]
\pn consists of
\begin{itemize}
    \item three additive functors $U:\pic \rightarrow \pic',V:\qic \rightarrow \qic',W:\gic \rightarrow \gic'$, and
     \item a morphism of $\bS$-stacks $F:\bic \rightarrow \bic'$,
\end{itemize}
such that $(F,U \times V,U\times W):\bic_\pic \rightarrow \bic'_{\pic'} $ and $(F,U \times V,V\times W):\bic_\qic \rightarrow \bic'_{\qic'} $ are morphisms of extensions. 
\end{defn}
 
 In the above definition we have used the following notation: \\
 $U\times V:\qic_\pic \rightarrow \qic'_{\pic'}, U\times W:\gic_\pic \rightarrow \gic'_{\pic'} , U\times V:\pic_\qic \rightarrow \pic'_{\qic'}  $ and $V\times W: \gic_\qic \rightarrow \gic'_{\qic'}$.
 
Let $(F,U,V,W),(\overline{F},\overline{U},\overline{V},\overline{W}):\bic \longrightarrow \bic'$ be two morphisms of biextensions.

\begin{defn}\label{def:mormorbiext}
A \textbf{morphism of morphisms of biextensions}
\[(\varphi,\alpha,\beta,\gamma):(F,U,V,W) \Rightarrow (\overline{F},\overline{U},\overline{V},\overline{W}) \]
\pn consists of 
\begin{itemize}
    \item three morphisms of additive functors $\alpha: U\times V \Rightarrow \overline{U}\times \overline{V}, \beta: U\times W  \Rightarrow \overline{U}\times \overline{W} $ and $\gamma: V\times W  \Rightarrow \overline{V}\times \overline{W} $,
    \item a morphism of morphisms of $\bS$-stacks $\varphi:F \Rightarrow \overline{F}$,
\end{itemize}
    such that $(\varphi,\alpha,\beta): (F,U \times V,U\times W) \Rightarrow
(\overline{F},\overline{U} \times \overline{V},\overline{U}\times \overline{W})$ and $ (\varphi,\alpha,\gamma):   (F,U \times V,V\times W) \Rightarrow (\overline{F},\overline{U} \times \overline{V},\overline{V}\times \overline{W})$ are morphisms of morphisms of extensions.
\end{defn}

 Biextensions of $(\pic,\qic)$ by $\gic$ form a 2-category ${\cBiext}(\pic,\qic;\gic)$
 where
\begin{enumerate}
	\item the objects are biextensions of $(\pic,\qic)$ by $\gic$,
	\item the 1-arrows are morphisms of biextensions, 
	\item the 2-arrows are morphisms of morphisms of biextensions.
\end{enumerate}

We have the following equivalence of 2-categories
\[ {\cBiext}(\pic,{\mathbf{1}};\gic) \cong {\cBiext}({\mathbf{1}}, \pic ;\gic) \cong 
{\cExt}( \pic ,\gic). \]

Let $P=[d^P: P^{-1} \rightarrow P^0], Q=[d^Q: Q^{-1} \rightarrow Q^0]$ and $G=[d^G: G^{-1} \rightarrow G^0]$ be complexes of $\cK^{[-1,0]}(\bS)$. If $\mathbf{e}$ denotes the final object of the category of abelian sheaves on the site $\bS$, the complex $\mathbf{E}=[id_\mathbf{e}: \mathbf{e} \rightarrow \mathbf{e}]$ corresponds to the strictly Picard $\bS$-stack $\mathbf{1}$ via the equivalence of category (\ref{st}): $st(\mathbf{E})=\mathbf{1}$. We denote by $G_P$ (resp. $P_Q,G_Q,G_{P \times_{\mathbf{E}} Q}$) the fibered product $G \times_{\mathbf{E}} P$ (resp. $P \times_{\mathbf{E}} Q,G \times_{\mathbf{E}} Q,G \times_{\mathbf{E}} P \times_{\mathbf{E}} Q$).

\begin{defn}\label{def:biextcomplexes}
An \textbf{biextension} of $(P,Q)$ by $G$ is a $G_{P \times_{\mathbf{E}} Q}$-torsor $B$ over $P \times_{\mathbf{E}} Q$, endowed with a structure of extension of $Q_P$ by $G_P$ and a structure of extension of $P_Q$ by $G_Q,$ which are compatible one with another.
\end{defn}

\begin{rem}\label{rem:biextcomplexes} Because of Remarks (\ref{rem:torcomplexes}) and (\ref{rem:extcomplexes}), if $G=[G^{-1} \stackrel{0}{\rightarrow} G^0], P=[P^{-1} \stackrel{0}{\rightarrow} P^0]$ and $Q=[Q^{-1} \stackrel{0}{\rightarrow} Q^0]$,
then a biextension of $(P,Q)$ by $G$ consists of a biextension of $(P^0,Q^0)$ by $G^0$ and a biextension of $(P^{-1},Q^{-1})$ by $G^{-1}$.
\end{rem}

 In oder to explain what it means for two extensions to be compatible we used the description of extensions in term of torsors furnished by Corollary \ref{cor:ext-torcomplexes}:
 denote by $(B_P,i^Q,m^Q,\alpha^Q,\chi^Q)$ and by
 $(B_Q,i^P,m^P,\alpha^P,\chi^P)$ the data corresponding respectively to the extensions $B_P$ of $Q_P$
by $G_P$ and $B_Q$ of $P_Q$ by $G_Q$ underlying the biextension $B$. In particular, if $p_i^{Q}: Q_P \times Q_P \rightarrow Q_P $ 
(resp. $p_i^{P}: P_Q \times P_Q \rightarrow P_Q $) 
are the projections ($i=1,2$) and $+^{Q}:Q_P \times Q_P  \rightarrow Q_P $ 
(resp. $+^{P}:P_Q \times P_Q  \rightarrow P_Q $)
is the group law of $ Q_P $
(resp. $ P_Q $), 
$$m^Q: p_1^{Q \;*} \; B_P \wedge p_2^{Q \;*} \; B_P \longrightarrow +^{Q \;*} \; B_P \quad 
(\mathrm{resp.} \; \; m^P: p_1^{P \;*} \; B_Q \wedge p_2^{P \;*} \; B_Q \longrightarrow +^{P \;*} \; B_Q)
$$ 
is a morphism of $G_P$-torsors over $Q_P \times Q_P $ 
(resp. of $G_Q$-torsors over $P_Q \times P_Q$).\\
The two extensions $B_P$ of $Q_P$ by $G_P$ 
and $B_Q$ of $P_Q$ by $G_Q$ are \textbf{compatible} if it exists 
an isomorphism $\beta$ of morphisms of $G_{P \times_{\mathbf{E}} Q}$-torsors over $(P \times_{\mathbf{E}} Q) \times (P \times_{\mathbf{E}} Q)$
\begin{equation}
\xymatrix{
	& +^{P \;*} p_1^{Q \;*} B \wedge +^{P \;*}p_2^{Q \;*}B \ar[dr]^{m^Q}&   \\
 (p_1^P,p_1^Q)^*B \wedge (p_1^Q,p_2^P)^*B \wedge
 (p_1^P,p_2^Q)^*B \wedge (p_2^P,p_2^Q)^*B  \ar[d]_{sym} \ar[ur]^{m^P \wedge m^P}  \ar@{~>}[rr]^\beta & & +^{Q \;*} +^{P \;*} B\\
(p_1^P,p_1^Q)^*B \wedge (p_1^P,p_2^Q)^*B\wedge
 (p_1^Q,p_2^P)^*B \wedge (p_2^P,p_2^Q)^*B \ar[dr]_{m^Q \wedge m^Q} & & \\
&+^{Q \;*} p_1^{P \;*}B \wedge +^{Q \;*} p_2^{P \;*}B \ar[uur]_{m^P}& 
}   
\label{eq:compbiextcomp}
\end{equation}

\section{Operations on biextensions of strictly commutative Picard stacks}

Let $U:\pic' \rightarrow \pic,V:\qic' \rightarrow \qic,W:\gic \rightarrow \gic'$
be three additive functors. Consider a biextension $\bic$ of $(\pic,\qic)$ by $\gic$.

\begin{defn} The \emph{pull-back $(U\times V)^*\eic$ of the biextension $\bic$ via the additive functors $U \times V:\pic' \times_{\mathbf{1}} \qic' \rightarrow \pic \times_{\mathbf{1}} \qic$} is the fibered product $\bic \times_{\pic \times_{\mathbf{1}} \qic} (\pic' \times_{\mathbf{1}} \qic')$
of $\bic$ and $\pic' \times_{\mathbf{1}} \qic'$ over $\pic \times_{\mathbf{1}} \qic $ via $U\times V$.
\end{defn}

By \cite{Be10} Lemma 4.2 the pull-back $(U\times V)^*\bic$ is a biextension 
of $(\pic',\qic')$ by $\gic$.

\begin{defn} The \emph{push-down $W_*\bic$ of the biextension $\bic$ via the additive functor $W:\gic \rightarrow \gic'$} is the fibered sum $\bic +^\gic \gic'$ of $\bic$ and $\gic'$ under $\gic$ via $W$.
\end{defn}

By \cite{Be10} Lemma 4.4 the push-down $W_*\bic$ is a biextension 
of $(\pic,\qic)$ by $\gic'$.

Now let $\bic'$ be another biextension of $(\pic,\qic)$ by $\gic$. According to \cite{Be10} Lemma 4.5, the product $\bic \times \bic'$ is a biextension of $(\pic \times \pic, \qic \times \qic)$ by $\gic \times \gic$.

\begin{defn} \label{def:+} 
 The \emph{sum $\bic + \bic'$ of the biextensions $\bic$ and $\bic'$} is the following biextension of $(\pic,\qic)$ by $\gic$
\begin{equation}
D^* +_* ( \bic \times \bic')
\label{eq:+}
\end{equation}
 where $+: \gic \times \gic \rightarrow \gic$ is the group law of $\gic$ and 
 $D=(D_\pic,D_\qic): \pic \times \qic \rightarrow (\pic \times \pic) \times (\qic \times \qic)$ with $D_\pic$ (resp. $D_\qic$) the diagonal functor of $\pic$ (resp. $\qic$).
\end{defn}

As a consequence of \cite{Be10} Lemma 4.7 we have the following

\begin{lem}\label{lem:sumofbiext}
The above notion of sum of biextensions defines on the set of equivalence classes of biextensions of $(\pic,\qic)$ by $\gic$ an associative, commutative group law with neutral object, that we denote $\gic \times_{\mathbf{1}} \pic \times_{\mathbf{1}} \qic$.
\end{lem}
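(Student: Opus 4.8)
The plan is to deduce the four group axioms from the functorial behaviour of the three operations entering the defining formula $\bic+\bic'=D^*\,+_*(\bic\times\bic')$, invoking \cite{Be10} Lemma 4.7 for the underlying structural compatibilities. First I would record that the right-hand side is indeed a biextension of $(\pic,\qic)$ by $\gic$: by \cite{Be10} Lemma 4.5 the product $\bic\times\bic'$ is a biextension of $(\pic\times\pic,\qic\times\qic)$ by $\gic\times\gic$, pushing it down along $+\colon\gic\times\gic\to\gic$ gives a biextension by $\gic$ (\cite{Be10} Lemma 4.4), and pulling this back along $D$ gives a biextension of $(\pic,\qic)$ by $\gic$ (\cite{Be10} Lemma 4.2). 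Since these three operations also carry morphisms of biextensions to morphisms of biextensions, a pair of equivalences $\bic\simeq\bic_1$ and $\bic'\simeq\bic'_1$ yields an equivalence $\bic+\bic'\simeq\bic_1+\bic'_1$, so that $+$ descends to a well-defined binary operation on ${\cBiext}^1(\pic,\qic;\gic)$.

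Next I would verify the axioms one by one. Commutativity would follow by composing the symmetry isomorphism $\bic\times\bic'\cong\bic'\times\bic$ that exchanges the two factors with the commutativity constraint $\tau$ of $\gic$, which makes $+\colon\gic\times\gic\to\gic$ symmetric, together with the symmetry of the diagonal $D$. Associativity would be obtained the same way: both iterated sums are computed by pulling the triple product $\bic\times\bic'\times\bic''$ back along the triple diagonal and pushing down along $\gic\times\gic\times\gic\to\gic$, and the two bracketings agree up to the associativity constraint $\sigma$ of $\gic$. The neutral object is the trivial biextension $\gic\times_{\mathbf{1}}\pic\times_{\mathbf{1}}\qic$: using the neutral object $e$ of $\gic$ and the unit constraints $l,r$ one constructs an equivalence $\bic+(\gic\times_{\mathbf{1}}\pic\times_{\mathbf{1}}\qic)\cong\bic$. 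Finally the inverse of $\bic$ would be its push-down $(-)_*\bic$ along the morphism $-\colon\gic\to\gic$, and the isomorphism $o_a\colon a+(-a)\cong e$ of $\gic$ provides the equivalence $\bic+(-)_*\bic\cong\gic\times_{\mathbf{1}}\pic\times_{\mathbf{1}}\qic$.

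The hard part will be to promote each of these isomorphisms of the underlying $\gic_{\pic\times_{\mathbf{1}}\qic}$-torsors to a genuine \emph{equivalence of biextensions}, i.e. to check that it is simultaneously compatible with both extension structures $M^{\pic},M^{\qic}$, with their constraints $\alpha^{\pic},\alpha^{\qic},\chi^{\pic},\chi^{\qic}$, and with the compatibility datum $\beta$ of~(\ref{eq:compbiextpic}). In the torsor description of Theorem~\ref{thm:ext-tor} this amounts to a series of diagram chases showing that the symmetry, associativity, unit and inverse isomorphisms intertwine the two families of data on either side. The point is that $D^*$, $+_*$ and $\times$ act componentwise on the two extension structures of a biextension and preserve all of this data by Lemmas 4.2, 4.4 and 4.5, so that \cite{Be10} Lemma 4.7 applies to each extension structure separately; combining the two we conclude that the sum endows ${\cBiext}^1(\pic,\qic;\gic)$ with an associative and commutative group law whose neutral element is $\gic\times_{\mathbf{1}}\pic\times_{\mathbf{1}}\qic$.
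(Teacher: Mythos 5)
Your proposal is correct and follows essentially the same route as the paper, which gives no argument beyond deducing the lemma directly from \cite{Be10} Lemma 4.7 and remarking that the neutral object is the trivial $\gic_{\pic \times_{\mathbf{1}} \qic}$-torsor. You simply make explicit the standard verifications (well-definedness on equivalence classes, use of $\tau$, $\sigma$, the unit constraints and the inverse $-\colon\gic\to\gic$, and compatibility with the biextension data) that the paper delegates wholesale to that citation.
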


Remark that the neutral object is the trivial $\gic_{\pic \times_{\mathbf{1}} \qic}$-torsor over $\pic \times_{\mathbf{1}} \qic$.

\section{Proof of theorem 0.1 (\textbf{b}) and (\textbf{c})}

Let $\pic,\qic$ and $\gic$ be three strictly commutative Picard $\bS$-stacks.
According to Lemma \ref{lem:sumofbiext}, the set of equivalence classes of objects of ${\cBiext}(\pic,\qic;\gic)$ is a commutative group with neutral object $\bic_0= \gic \times_{\mathbf{1}} \pic \times_{\mathbf{1}} \qic$. We denote this group by
$${\cBiext}^1(\pic,\qic;\gic).$$

The monoid of isomorphism classes of arrows from an object $\bic$ of ${\cBiext}(\pic,\qic;\gic)$ to itself is canonically isomorphic to the monoid of isomorphism classes of arrows from $\bic_0$ to itself: to an isomorphism class of an arrow $F :\bic_0 \rightarrow \bic_0$ the canonical isomorphism associates the isomorphism class of the arrow $F + Id_{\bic}$ from $\bic_0+ \bic \cong \bic$ to itself. 
The monoid of isomorphism classes of arrows from $\bic_0$ to itself is a commutative group via the composition law $(\overline{F},\overline{G}) \mapsto \overline{F+G}$
(here $\overline{F+G}$ is the isomorphism class of the arrow $F+G$ from $\bic_0 +\bic_0 \cong \bic_0$ to itself). Hence we can conclude that the set of isomorphism classes of arrows from an object of ${\cBiext}(\pic,\qic;\gic)$ to itself is a commutative group that we denote by 
$${\cBiext}^0(\pic,\qic;\gic).$$

The monoid of automorphisms of arrows from an object $\bic$ of ${\cBiext}(\pic,\qic;\gic)$ to itself is canonically isomorphic to the monoid of automorphisms of arrows from $\bic_0$ to itself: to an automorphism $\alpha:F \Rightarrow F$ of an arrow $F :\bic_0 \rightarrow \bic_0$ the canonical isomorphism associates the automorphism $\alpha + id_{Id_\bic}:F + Id_\bic \Rightarrow F + Id_\bic $ of the arrow  $F + Id_{\bic}$ from $\bic_0+ \bic \cong \bic$ to itself. 
The monoid of automorphisms of arrows from $\bic_0$ to itself is a commutative group via the following composition law: if $\alpha:F \Rightarrow F$ and $\beta:G \Rightarrow G$, then $\alpha +\beta:F+G \Rightarrow F+G$, with $F+G$ an arrow from $\bic_0 +\bic_0 \cong \bic_0$ to itself. Hence we can conclude that the set of automorphisms of an arrow from an object of ${\cBiext}(\pic,\qic;\gic)$ to itself is a commutative group that we denote by
$${\cBiext}^{-1}(\pic,\qic;\gic).$$

\emph{Proof of Theorem \ref{intro:mainthm}} (\textbf{b}) and (\textbf{c}). As we have observed at the beginning of this section, in order to prove (\textbf{b}) and (\textbf{c}) we can work with the biextension $\bic_0= \gic \times_{\mathbf{1}} \pic \times_{\mathbf{1}} \qic$ of $(\pic,\qic)$ by $\gic$. In particular $\bic_0$ is a strictly commutative Picard $\bS$-stack and so the group of isomorphism classes of arrows from $\bic_0$ to itself is the cohomology group ${\h}^0([{\HOM}(\bic_0,\bic_0)])$ and the group 
of automorphisms of arrows from $\bic_0$ to itself is the cohomology group ${\h}^{-1}([{\HOM}(\bic_0,\bic_0)])$. Therefore, in order to conclude it is enough to compute the complex $[{\HOM}(\bic_0,\bic_0)]$.\\
Let $F: \bic_0\rightarrow \bic_0$ be an additive functor.
Since $F$ is first of all an arrow from the $\gic_{\pic \times_{\mathbf{1}} \qic}$-torsor over $\pic \times_{\mathbf{1}} \qic$ underlying $\bic_0$ to itself,  $F$ is given by the formula
$$F(b)= b + IF'J (b) \qquad \forall \; b \in \bic_0$$
where $F': \pic \times \qic \rightarrow \gic$ is an additive functor and $J: \bic_0 \rightarrow \pic \times \qic$ and $I: \gic \rightarrow \bic_0$ the additive functors underlying the structure of $\gic_{\pic \times_{\mathbf{1}} \qic}$-torsor over $\pic \times_{\mathbf{1}} \qic$ of $\bic_0$.
Now $F: \bic_0 \rightarrow \bic_0$ must be compatible with the structures of extension of $\qic_\pic$
by $\gic_\pic$ and of extension of $\pic_\qic$ by $\gic_\qic$ underlying $\bic_0$,
and so $F': \pic \times \qic \rightarrow \gic$ must be a biadditive functor. 
Hence we get that ${\HOM}(\bic_0,\bic_0)$ is equivalent as strictly commutative Picard $\bS$-stack to ${\HOM}(\pic, \qic;\gic)$ via the following additive functor 
\begin{eqnarray}
\nonumber {\HOM}(\pic, \qic;\gic) &\longrightarrow & {\HOM}(\bic_0,\bic_0) \\
 \nonumber  F' & \mapsto & \big(b \mapsto b + IF'J (b) \big).
\end{eqnarray}
In the example \ref{ex:pic} we have observed that the strictly commutative Picard $\bS$-stacks ${\HOM}(\pic, \qic;\gic)$ and ${\HOM}(\pic \otimes \qic,\gic)$
are equivalent as strictly commutative Picard $\bS$-stacks and so 
 $$[{\HOM}(\bic_0,\bic_0)]= \tau_{\leq 0}{\R}{\Hom}\Big(\tau_{\geq -1}([\pic]\otimes^{\LL}[\qic]),[\gic] \Big),$$ 
 i.e. the group of isomorphism classes of additive functors from $\bic_0$ to itself is isomorphic to the group ${\Hom}_{\cD(\bS)}([\pic]\otimes^{\LL}[\qic],[\gic])$, and the 
 group of automorphisms of an additive functor from $\bic_0$ to itself is isomorphic to the group ${\Hom}_{\cD(\bS)}([\pic]\otimes^{\LL}[\qic],[\gic][-1])$. \\

In Section 10 we gives another proof of Theorem \ref{intro:mainthm} \textbf{b} and \textbf{c}.

\section{The 2-category $\Psi_{\lic^.}(\gic)$ and its homological interpretation}

A cochain complex of strictly commutative Picard $\bS$-stacks
\[\stackrel{}{\longrightarrow}  \lic^{-1} \stackrel{D^{-1}}{\longrightarrow} \lic^0 \stackrel{D^0}{\longrightarrow} \lic^1 \stackrel{D^1}{\longrightarrow}\]
consists of 
\begin{itemize}
	\item strictly commutative Picard $\bS$-stacks $\lic^i$ for $i \in \ZZ$,
	\item additive functors $D^i$ for $i \in \ZZ$,
	\item isomorphisms of additive functors between the composite $D^{i+1} \circ D^i$ and the trivial additive functor: $D^{i+1} \circ D^i \cong 0$ for $i \in \ZZ$.
\end{itemize}

Let $\gic$ be a strictly commutative Picard $\bS$-stack and let
\[\lic^.: \qquad \ric \stackrel{D^\ric}{\longrightarrow} \qic \stackrel{D^\qic}{\longrightarrow} \pic \stackrel{D^\pic}{\longrightarrow} 0\] 
be a complex of strictly commutative Picard $\bS$-stacks with $\pic, \qic$ and $\ric$ in degrees 0,-1 and -2 respectivelly.

\begin{defn}\label{psi}
Denote by $\Psi_{\lic^.}(\gic)$ the 2-category 
\begin{enumerate}
       \item whose objects are pairs $(\eic,I)$ with $\eic$ an extension of $\pic$ by $\gic$ and $I$ a trivialization of the extension $(D^\qic)^* \eic$ of $\qic$ by $\gic$ obtained as pull-back of $\eic$ by $D^\qic$.
        Moreover we require that the corresponding trivialization
           $(D^\ric)^* I$ of $(D^\ric)^* (D^\qic)^* \eic$ is the trivialization arising from the isomorphism of transitivity $ (D^\ric)^* (D^\qic)^* \eic \cong  (D^\qic \circ D^\ric)^*\eic$ and the relation $ D^\qic \circ D^\ric \cong 0$. Note that to have such a trivialization $I$ is the same thing as to have a lifting $I: \qic \rightarrow \eic$ of $D^\qic:\qic  \rightarrow \pic$ such that $ I \circ D^\ric \cong 0;$
      \item whose 1-arrows $F: (\eic,I) \rightarrow (\eic',I')$ are morphisms of extensions $F: \eic \rightarrow \eic'$ compatible with the trivializations $I,I'$, i.e. we have an isomorphism of additive functors $F \circ I \cong I'$,
     \item whose 2-arrows $\alpha: F \Rightarrow \overline{F}$ are morphisms of morphisms of extensions which are compatible with the isomorphisms of additive functors $F \circ I \cong I'$ and $\overline{F} \circ I \cong I'$, i.e. the following diagram commutes for any $q \in \qic$
 \[\xymatrix{
 FI(q)\ar[d]_{\alpha(I(q))}\quad \ar[r]^{\cong} & I'(q) \\
\overline{F}I(q).\ar[ur]_{\cong} & 
}
\]    
\end{enumerate}
 \end{defn}

We can summarize the data $(\eic,I)$ with the following diagram:
\[\begin{array}{ccccccccc}
  \gic & =& \gic& =& \gic & & \\
 \downarrow && \downarrow && \downarrow &&  \\
(D^\ric)^*(D^\qic)^*\eic  & \rightarrow &(D^\qic)^*\eic & \rightarrow&\eic&& \\
 \downarrow && I\uparrow\downarrow && \downarrow &&  \\
  \ric& \stackrel{D^\ric}{\rightarrow}&\qic&
\stackrel{D^\qic}{\rightarrow}&\pic& \rightarrow &0 \\
\end{array}\]

The sum of extensions of strictly commutative Picard $\bS$-stacks defined in \cite{Be10} 4.6 furnishes a group law on the set of equivalence classes of objects of $\Psi_{\lic^.}(\gic)$. We denote this group by $\Psi_{\lic^.}^1(\gic).$
The neutral object of $\Psi_{\lic^.}^1(\gic)$ is the object $(\eic_0,I_0)$ where $\eic_0$ is the extension $\gic \times_{\mathbf{1}} \pic$ of $\pic$ by $\gic$ and $I_0$ is the trivialization $(Id_{\qic},0)$
of the extension $(D^\qic)^*\eic_0= \gic \times_{\mathbf{1}} \qic$ of $\qic$ by $\gic$. We can consider $I_0$ as the lifting $(D^\qic,0)$ of $D^\qic: \qic \rightarrow \pic$.

The monoid of isomorphism classes of arrows from an object $(\eic, I)$ of $\Psi_{\lic^.}(\gic)$ to itself is canonically isomorphic to the monoid of isomorphism classes of arrows from $(\eic_0,I_0)$ to itself: to an isomorphism class of an arrow $F :(\eic_0,I_0) \rightarrow (\eic_0,I_0)$ the canonical isomorphism associates the isomorphism class of the arrow $F + Id_{(\bic,I)}$ from $(\eic_0,I_0)+ (\eic,I) \cong (\eic,I)$ to itself. 
The monoid of isomorphism classes of arrows from $(\eic_0,I_0)$ to itself is a commutative group via the composition law $(\overline{F},\overline{G}) \mapsto \overline{F+G}$
(here $\overline{F+G}$ is the isomorphism class of the arrow $F+G$ from $(\eic_0,I_0) +(\eic_0,I_0) \cong (\eic_0,I_0)$ to itself). Hence we can conclude that the set of isomorphism classes of arrows from an object of $\Psi_{\lic^.}(\gic)$ to itself is a commutative group that we denote by 
$\Psi_{\lic^.}^0(\gic).$

The monoid of automorphisms of arrows from an object $(\eic, I)$ of $\Psi_{\lic^.}(\gic)$ to itself is canonically isomorphic to the monoid of automorphisms of arrows from $(\eic_0,I_0)$ to itself: to an automorphism $\alpha:F \Rightarrow F$ of an arrow $F :(\eic_0,I_0) \rightarrow (\eic_0,I_0)$ the canonical isomorphism associates the automorphism $\alpha + id_{Id_{(\eic,I)}}:F + Id_{(\eic,I)} \Rightarrow F + Id_{(\eic,I)} $ of the arrow  $F + Id_{(\eic,I)}$ from $(\eic_0,I_0) + (\eic,I) \cong (\eic,I)$ to itself.
The monoid of automorphisms of arrows from $(\eic_0,I_0)$ to itself is a commutative group via the following composition law: if $\alpha:F \Rightarrow F$ and $\beta:G \Rightarrow G$, then $\alpha +\beta:F+G \Rightarrow F+G$, with $F+G$ an arrow from $(\eic_0,I_0) +(\eic_0,I_0) \cong (\eic_0,I_0)$ to itself. Hence we can conclude that the set of automorphisms of an arrow from an object of $\Psi_{\lic^.}(\gic)$ to itself is a commutative group that we denote by
$\Psi_{\lic^.}^{-1}(\gic).$

If $[\ric]=[d^R:R^{-1} \rightarrow R^0], [\pic]=[d^P:P^{-1} \rightarrow P^0]$ and $[\qic]=[d^Q:Q^{-1} \rightarrow Q^0]$, the complex $\lic^.$ of strictly commutative Picard $\bS$-stacks furnishes, modulo quasi-isomorphisms, a diagram in the category $\cK(\bS)$ of complexes of abelian sheaves
 \[
[\lic^.]: \qquad \qquad R  \stackrel{D^R}{\longrightarrow}  Q \stackrel{D^Q}{\longrightarrow} P \longrightarrow 0
\] 
where $D^R=(d^{R,-1},d^{R,0}), D^Q=(d^{Q,-1},d^{Q,0})$ and $ D^R \circ D^Q$ is homotopic to zero. We can consider $[\lic^.]$ as a bicomplex of abelian sheaves,   
\[\xymatrix{
 R^{-1} \ar[d]_{d^R} \ar[r]^{d^{R, -1}} & Q^{-1} \ar[d]^{d^Q} \ar[r]^{d^{Q, -1}} & P^{-1} \ar[d]^{d^P} \ar[r] &0\\
R^0 \ar[r]^{d^{R, 0}} &  Q^0  \ar[r]^{d^{Q, 0}} & P^0  \ar[r] &0
}
\]  
where $P^0,P^{-1},Q^0,Q^{-1},R^0,R^{-1}$ are respectively in degrees $(0,0),(0,-1),(-1,0),\\(-1,-1),(-2,0),(-2,-1)$. Denote by ${\Tot}([\lic^.])$ the total complex of this bicomplex. 
We have the following homological interpretation of the groups $\Psi_{\lic^.}^i(\gic)$.

\begin{thm}\label{thm:psi-ext}
$$\Psi_{\lic^.}^i(\gic) \cong {\Ext}^i\big({\Tot}([\lic^.]),[\gic]\big)= {\Hom}_{\cD(\bS)}\big({\Tot}([\lic^.]),[\gic][i]\big) \qquad \qquad i=-1,0,1.$$
\end{thm}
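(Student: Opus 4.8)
The plan is to combine the dictionary $st$ between strictly commutative Picard $\bS$-stacks and $\cD^{[-1,0]}(\bS)$ with the universal property of the total complex of a bicomplex, which converts a compatible family of maps equipped with (iterated) homotopies into a single morphism out of $\Tot([\lic^.])$. I would handle the cases $i=0,-1$ by reducing to the neutral object $(\eic_0,I_0)$, as in the paragraphs preceding the statement, and handle $i=1$ by classifying the objects of $\Psi_{\lic^.}(\gic)$ up to equivalence directly.

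For $i=0$ and $i=-1$, the canonical isomorphisms established above identify $\Psi^0_{\lic^.}(\gic)$ and $\Psi^{-1}_{\lic^.}(\gic)$ with the group of isomorphism classes of arrows, respectively of automorphisms of an arrow, from $(\eic_0,I_0)$ to itself. Since $\eic_0=\gic\times_{\mathbf 1}\pic$ carries the trivial extension structure, I would describe a self-arrow $F$ (over the identity on $\pic$ and $\gic$) exactly as in Section 7, namely $F(x)=x+IF'J(x)$ for an additive functor $F':\pic\to\gic$; the compatibility $F\circ I_0\cong I_0$ then forces a trivialization of $F'\circ D^\qic$, and the defining condition of $I_0$ over $\ric$ forces this trivialization to agree with the canonical one attached to $D^\qic\circ D^\ric\cong 0$. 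The resulting Picard stack, whose objects are such pairs and whose arrows are the $2$-arrows of $\Psi_{\lic^.}(\gic)$, I expect to be equivalent to the strictly commutative Picard $\bS$-stack associated to $\tau_{\leq 0}{\R}{\Hom}\big(\Tot([\lic^.]),[\gic]\big)$: by the universal property of the total complex, a map $\Tot([\lic^.])\to[\gic]$ is the same as a map $[\pic]\to[\gic]$, a nullhomotopy of its composite with $D^\qic$, and a compatibility of that nullhomotopy over $\ric$, which is exactly the datum just described. Reading off $\h^0$ and $\h^{-1}$ of $\tau_{\leq 0}{\R}{\Hom}(\Tot([\lic^.]),[\gic])$ would then give the isomorphisms for $i=0$ and $i=-1$.

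For $i=1$, I would attach to an object $(\eic,I)$ a morphism $\Tot([\lic^.])\to[\gic][1]$ in $\cD(\bS)$. By the homological interpretation of extensions of Picard stacks recalled via (\ref{st}) and \cite{Be10}, the extension $\eic$ of $\pic$ by $\gic$ gives a class $c(\eic)\in{\Hom}_{\cD(\bS)}([\pic],[\gic][1])$ fitting in the triangle $[\gic]\to[\eic]\to[\pic]\stackrel{c(\eic)}{\to}[\gic][1]$; the lifting $I:\qic\to\eic$ of $D^\qic$ with $I\circ D^\ric\cong 0$ provides a nullhomotopy of $c(\eic)\circ[D^\qic]$ whose restriction along $[D^\ric]$ is the canonical trivialization arising from $D^\qic\circ D^\ric\cong 0$. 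By the universal property of the iterated mapping cone computing $\Tot([\lic^.])$, these data assemble into a single class in ${\Ext}^1(\Tot([\lic^.]),[\gic])$. I would then check that equivalent objects yield the same class, that every class is attained, and that the assignment is additive for the sum defined in \cite{Be10} 4.6 on the source and the Baer sum on the target, giving the group isomorphism $\Psi^1_{\lic^.}(\gic)\cong{\Ext}^1(\Tot([\lic^.]),[\gic])$.

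The hard part will be the bookkeeping of the nested homotopies. For the neutral object I will need to match the $2$-arrows of $\Psi_{\lic^.}(\gic)$ precisely with the degree $-1$ homotopies of ${\R}{\Hom}(\Tot([\lic^.]),[\gic])$, keeping track of the signs coming from the two differentials of the bicomplex; and for $i=1$ the delicate point is that the $\ric$-level compatibility of the trivialization is exactly what is needed to extend a morphism defined on the subcomplex $\qic\to\pic$ to all of $\Tot([\lic^.])$, and that the resulting assignment is well defined on equivalence classes and additive. Once the universal property of the total complex is matched term by term with the geometric data $(\eic,I)$ and its morphisms, all three isomorphisms follow.
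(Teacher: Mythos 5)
Your strategy coincides with the paper's: for $i=0,-1$ you reduce to the neutral object $(\eic_0,I_0)$ and identify its stack of self-arrows, and for $i=1$ you attach to $(\eic,I)$ the class obtained from the extension class of $\eic$ together with the nullhomotopy supplied by $I$, which is exactly the paper's morphism $c(\eic,I):\Tot([\lic^.])\to MC(j)$. There are, however, two places where the proposal stops short of a proof. First, in degrees $0$ and $-1$ your "universal property of the total complex" identifies the data (additive functor $\pic\to\gic$, trivialization of its composite with $D^\qic$, compatibility over $\ric$) with \emph{chain} maps $\Tot([\lic^.])\to[\gic]$ modulo homotopy; but for abelian sheaves on a site the group $\Hom_{\cK(\bS)}(L^.,G)/\mathrm{htpy}$ does not compute $\Hom_{\cD(\bS)}(L^.,G)$ without an argument. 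The paper closes this gap with the stupid-filtration spectral sequence and the vanishing $\Ext^1(L^{0},G^{-1})=0$ (cf.\ its Remark following the computation, which records $\Hom_{\cD(\bS)}(L^.,G)=\Hom_{\cH(\bS)}(L^.,G)$ as a \emph{consequence}); you need that step or an equivalent one.

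Second, and more substantially, for $i=1$ the assertions that equivalent objects yield the same class, that an object with vanishing class is equivalent to the neutral one, and that every class is attained are precisely the content of the theorem in that degree, and they do not follow formally from a "universal property of the iterated mapping cone": cones are not functorial in $\cD(\bS)$, so both the assembly of the class and, more critically, the inverse direction must be carried out at the chain level with explicit choices. The paper does this by choosing a two-term resolution $0\to G\to V^{0}\to V^{1}\to 0$, identifying $E$ with the fibered product $P\times_{V^{1}}V^{0}$, and converting a nullhomotopy of $c(\eic,I)$ into a morphism $h:P\to E$ with $j\circ h=id_{P}$ and $h\circ D^{Q}=i$, i.e.\ a splitting of $\eic$ compatible with $I$; surjectivity is the dual construction producing $(st(Z),st(z))$ with $Z=(v^{1})^{*}V^{0}$. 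Your proposal labels this as "bookkeeping of the nested homotopies," but it is where the proof actually lives, so as written the $i=1$ case is a plan rather than an argument.
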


\emph{Proof of the cases i=-1 and 0.} As observed above, $\Psi_{\lic^.}^0(\gic)$ is canonically isomorphic to the group of isomorphism classes of arrows from $(\eic_0,I_0)$ to itself, and $\Psi_{\lic^.}^{-1}(\gic)$ is canonically isomorphic to the group of automorphisms of arrows from $(\eic_0,I_0)$ to itself. This implies that in order to prove the cases $i=-1,0$ we can work with the neutral object $(\eic_0,I_0)$. By definition of 1-arrows in the 2-category $\Psi_{\lic^.}(\gic)$, the additive functor $F: \eic_0 \rightarrow \eic_0$ is a 1-arrow from $(\eic_0,I_0)$ to itself if we have an isomorphism of additive functors  $F \circ D^\qic \cong 0$, i.e. if $F$ is an object of the strictly commutative Picard $\bS$-stack
\[\kic=\ker\big({\HOM}(\pic,\gic)  \stackrel{D^\qic}{\rightarrow} {\HOM}(\qic,\gic)\big).\]
Therefore we have the equalities 
\begin{equation}
\Psi_{\lic^.}^i(\gic) = {\h}^i\big([\kic]\big) \qquad \qquad i=-1,0
\label{eq:kic}
\end{equation}
and in order to conclude, it is enough to compute the complex $[\kic]$ of $\cK^{[-1,0]}(\bS).$
By \cite{Be10} Lemma 3.4 we have  
\[ [\kic]= \tau_{\leq 0} \Big( MC\big(\tau_{\leq 0}{\R}{\Hom}([\pic],[\gic]) \stackrel{(d^{R, -1},d^{R, 0})}{\longrightarrow} 
\tau_{\leq 0}{\R}{\Hom}([\qic],[\gic]) \big)[-1] \Big).\]
Explicitly, if $[\gic]=[d^G:G^{-1} \rightarrow G^0]$ we get 
\begin{equation}
[\kic]= \big[{\Hom}(P^0,G^{-1}) \stackrel{((d^G,d^P),d^{Q, 0})}{\longrightarrow} 
 K_1 +K_2\big]
\label{eq:psi0-1}
\end{equation}
 where 
\[
\begin{aligned}
\nonumber	K_1&= \ker\big({\Hom}(P^0,G^0) + {\Hom}(P^{-1},G^{-1}) \stackrel{(d^{Q, 0},d^{Q, -1})}{\rightarrow} {\Hom}(Q^0,G^0)+{\Hom}(Q^{-1},G^{-1}) \big)\\ 
\nonumber	K_2&=\ker \big({\Hom}(Q^0,G^{-1}) \stackrel{(d^G,d^Q)}{\rightarrow} {\Hom}(Q^0,G^0)+{\Hom}(Q^{-1},G^{-1}) \big).
\end{aligned}
\]
In order to simplify notation let $L^.: L^{-3} \rightarrow L^{-2} \rightarrow L^{-1} \rightarrow L^0$ be the total complex ${\Tot}([\lic^.])$. In particular $L^0=P^0, L^{-1}= P^{-1}+Q^0$ and $L^{-2}=Q^{-1}+R^0.$ 
The stupid filtration of the complexes $L^.$ and $G$ furnishes the spectral sequence
\begin{equation}
{\E}^{pq}_1= \bigoplus_{p_2-p_1=p} {\Ext}^q (L^{p_1},G^{p_2}) \Longrightarrow  {\Ext}^* (L^.,G).
\label{eq:suitesp}
\end{equation} 
This spectral sequence is concentrated in the region of the plane defined by $ -1 \leq p \leq 3$ and $q \geq 0$. We are interested on the total degrees -1 and 0. The rows $q=1$ and $q=0$ are
\small
\[
\begin{aligned}
& {\Ext}^1(L^{0},G^{-1}) \rightarrow  {\Ext}^1(L^{0},G^{0})  \oplus {\Ext}^1(L^{-1},G^{-1}) \rightarrow  {\Ext}^1(L^{-1},G^{0})  \oplus {\Ext}^1(L^{-2},G^{-1}) \rightarrow ...\\
&{\Hom}(L^{0},G^{-1})\stackrel{d_1^{-10}}{\rightarrow} {\Hom}(L^{0},G^{0})  \oplus {\Hom}(L^{-1},G^{-1})  \stackrel{d_1^{00}}{\rightarrow}  {\Hom}(L^{-1},G^{0})  \oplus {\Hom}(L^{-2},G^{-1}) \rightarrow ...
\end{aligned}
\]
\normalsize
Since ${\Ext}^1(L^{0},G^{-1})=0,$ i.e. the only extension of $[G^{-1} \rightarrow 0]$ by $[0 \rightarrow L^0]$ is the trivial one, we obtain  
\begin{eqnarray}
 \nonumber  {\Hom}_{{\cD}({\bS})}(L^.,G[-1]) &= & {\Ext}^{-1}(L^.,G) = {\E}^{-10}_2 = \ker ( d_1^{-10} ),\\
 \label{eq:D(S)=H(S)}  {\Hom}_{{\cD}({\bS})}(L^.,G) &= & {\Ext}^0(L^.,G) = {\E}^{00}_2 = \ker ( d_1^{00} ) / \im(d_1^{-10}) .
 \end{eqnarray}
 Comparing the above equalities with the explicit computation (\ref{eq:psi0-1}) of the complex $[\kic]$, we get 
 \[ {\Ext}^i(L^.,G) = {\h}^i\big([\kic]\big) \qquad \qquad i=-1,0. \]
 These equalities together with equalities (\ref{eq:kic}) give the expected statement.
 
 \begin{rem}
 In the computation (\ref{eq:psi0-1}) the term ${\Hom}(P^{-1},G^0)$ does not appear because we work with the good truncation $\tau_{\leq 0}{\R}{\Hom}([\pic],[\gic])$. In the spectral sequence (\ref{eq:suitesp}) this term appear but we are interested in elements which become zero in ${\Hom}(P^{-1},G^0)$. 
 \end{rem}

\begin{rem}\label{rem:D(S)=H(S)} If ${\cH}(\bS)$ denotes the category of complexes of abelian sheaves on $\bS$ modulo homotopy, by equality (\ref{eq:D(S)=H(S)}) we have $ {\Hom}_{{\cD}({\bS})}(L^.,G)={\Hom}_{{\cH}({\bS})}(L^.,G).$
\end{rem}

\emph{Proof of the case i=1.} First we show how an object $(\eic,I)$ of $\Psi_{\lic^.}(\gic)$ defines a morphism ${\Tot}([\lic^.]) \rightarrow [\gic][1]$ in the derived category $\cD(\bS)$. Recall that $\eic$ is an extension of $\pic$ by $\gic$. Denote $J:\eic \rightarrow \pic$ the additive functor underlying the extension $\eic$. Since the trivialization $I$ can be seen as a lifting $ \qic \rightarrow \eic$ of $D^\qic:\qic  \rightarrow \pic$ such that $ I \circ D^\ric \cong 0$, the diagram of additive functors 
\[
\xymatrix{
  \ric  \ar[d] \ar[r]^{D^\ric} & \qic \ar[d]_{I} \ar[r]^{D^\qic} & \pic  \ar[d]^{Id_\pic} \ar[r]  & 0 \\
0 \ar[r] & \eic  \ar[r]^{J} & \pic  \ar[r] & 0
}
\]
commutes. It furnishes, modulo quasi-isomorphisms, a diagram in the category $\cK(\bS)$ of complexes of abelian sheaves on $\bS$ 
 \begin{equation}\label{eq:su}
\xymatrix{
[\lic^.]: & R  \ar[d] \ar[r]^{D^R} & Q \ar[d]_{i} \ar[r]^{D^Q} & P \ar[d]^{id_P} \ar[r]  & 0 \\
MC(j): &0 \ar[r] & E  \ar[r]^{j} & P  \ar[r] & 0
}
\end{equation}
where $E=[\eic] \in \cK^{[-1,0]}(\bS)$, $D^R=(d^{R,-1},d^{R,0}), D^Q=(d^{Q,-1},d^{Q,0})$, $ i \circ D^R$ is homotopic to zero and $ j \circ i$ is homotopic to $ id_P \circ D^Q$. Putting the complex $P$ in degree 0, 
the above diagram gives an arrow  
\[ c(\eic,I): {\Tot}([\lic^.]) \longrightarrow MC(j)\]
in the derived category $\cD(\bS)$. Since $\gic$ is equivalent as strictly commutative Picard $\bS$-stack to $\ker(J)$, i.e. $[\gic]$ is quasi-isomorphic to $\tau_{\leq 0}(MC(j)[-1])$, we have constructed a canonical arrow
  \begin{eqnarray}\label{c}
 c: \Psi_{\lic^.}^1(\gic) & \longrightarrow & {\Hom}_{\cD(\bS)}\big({\Tot}([\lic^.]),[\gic][1]\big)  \\
 \nonumber  (\eic,I) & \mapsto & c(\eic,I).
\end{eqnarray}
Now we will show that this arrow is bijective. The proof that this bijection is additive, i.e. that $c$ is an isomorphism of groups, is left to the reader. 
From now on let $[\gic]=G=[d^G:G^{-1} \rightarrow G^0]\in \cK^{[-1,0]}(\bS)$. \\

Injectivity:  Let $(\eic,I)$ be an object of $\Psi_{\lic^.}(\gic)$ such that the morphism $c(\eic,I)$ that it defines in $\cD(\bS)$ is the zero morphism.
The hypothesis that $c(\eic,I)$ is zero in $\cD(\bS)$ implies that there exists a resolution of $G$ 
$$V^0 \longrightarrow V^1 \longrightarrow V^2 \longrightarrow ...$$
 and a quasi isomorphism
\begin{equation}
\xymatrix{
  0  \ar[r] & E\ar[d]_{v^0} \ar[r]^{j} & P \ar[d]^{v^1} \ar[r]  & 0 &\\
 0  \ar[r] & V^0 \ar[r]^{k} & V^1 \ar[r] & V^2 \ar[r] & ...
}
\label{eq:qiso}
\end{equation}
such that the composite 
\[
  \xymatrix{
  R \ar[r]^{D^R} & Q\ar[d]_{i} \ar[r]^{D^Q} & P \ar[d]^{id_P} \ar[r]  & 0 &\\
  0  \ar[r] & E\ar[d]_{v^0} \ar[r]^{j} & P \ar[d]^{v^1} \ar[r]  & 0 &\\
 0  \ar[r] & V^0 \ar[r]^{k} & V^1 \ar[r] & V^2 \ar[r] & ...
}
\]
is homotopic to zero. We can assume $V^i \in \cK^{[-1,0]}(\bS)$ for all $i$ and $V^i=0$ for $i \geq 2$ (instead of the complex of complexes $(V^i)_i$ consider its good truncation in degree 1). Since the complex of complexes $(V^i)_i$ is a resolution of $G$,
the short sequence of complexes 
$$ 0 \longrightarrow G \longrightarrow V^0 \longrightarrow V^1 \longrightarrow 0$$
is exact, i.e. $V^0$ is an extension of $W$ by $G$ (see Definition \ref{def:extcomplexes}).
Since the quasi-isomorphism (\ref{eq:qiso}) induces the identity on $G$, the extension $E$ is the fibred product $P \times_{V^1} V^0$ of $P$ and $V^0$ over $V^1$. Therefore, the morphism $s: P \rightarrow V^0$ inducing the homotopy $(v^0, v^1) \circ c(\eic,I) \sim 0$, i.e. satisfying $ k\circ s= v^1 \circ id_{P},$ factorizes through a morphism
\[h: P \longrightarrow E = P \times_{V^1} V^0\]
satisfying 
\[ j \circ h =id_{P} \qquad \qquad h \circ D^Q = i .\]
These two equalities mean that $st(h)$ splits the extension $\eic$, which is therefore the trivial extension of $\pic$ by $\gic$, and that $st(h)$ is compatible with the trivializations $I$.
Hence we can conclude that the object $(\eic,I)$ lies in the equivalence class of the zero object of $\Psi_{\lic^.}(\gic)$.\\

Surjectivity: Now we show that for any morphism $f$ of ${\Hom}_{\cD(\bS)}({\Tot}([\lic^.]),G[1])$, there is an element of $\Psi_{\lic^.}^1(\gic)$ whose image via $c$ is $f$.
The hypothesis that $f$ is an element of $\cD(\bS)$ implies that there exists a resolution of $G$ 
$$V^0 \longrightarrow V^1 \longrightarrow V^2 \longrightarrow ...$$
such that the morphism $f$ can be described in the category ${\cH}(\bS)$ via the following diagram
	\begin{equation}\label{eq:su1}
  \xymatrix{
  R \ar[r]^{D^R} & Q\ar[d]_{v^0} \ar[r]^{D^Q} & P \ar[d]^{v^1} \ar[r]  & 0 &\\
 0  \ar[r] & V^0 \ar[r]^{k} & V^1 \ar[r] & V^2 \ar[r] &...
}
\end{equation}
We can assume $V^i \in \cK^{[-1,0]}(\bS)$ for all $i$ and $V^i=0$ for $i \geq 2$ (instead of the complex of complexes $(V^i)_i$ consider its good truncation in degree 1). Since the complex of complexes $(V^i)_i$ is a resolution of $G$,
the short sequence of complexes 
$$ 0 \longrightarrow G \longrightarrow V^0 \longrightarrow V^1 \longrightarrow 0$$
is exact, i.e. $V^0$ is an extension of $V^1$ by $G$ (see Definition \ref{def:extcomplexes}).
 Consider the extension of $P$ by $G$
 \[Z=(v^1)^*V^0 = V^0 \times_{V^1} P\]
obtained as pull-back of $V^0$ via $w:P \rightarrow V^1.$ The condition $v^1 \circ D^Q = k \circ v^0 $ implies that $v^0 : Q\rightarrow V^0$ factories through a morphism
\[z:  Q \rightarrow Z\]
satisfying $l \circ z = D^Q$, with $l: Z \rightarrow P$ the canonical surjection of the extension $Z$. Moreover the conditions that $ v^0 \circ D^R $ and $D^Q \circ D^R $ are homotopic to zero furnish that also $z \circ D^R$ is homotopic to zero. 
Therefore the datum $(st(Z),st(z))$ is an object of the category  $\Psi_{\lic^.}(\gic)$. Consider now the morphism $c(st(Z),st(z)): {\Tot}([\lic^.]) \rightarrow G[1]$ associated to
$(st(Z),st(z))$. By construction, the morphism $f$ (\ref{eq:su1})
is the composite of the morphism $c(st(Z),st(z))$ 
\[
\xymatrix{
 R  \ar[d] \ar[r]^{D^R} & Q \ar[d]_{z} \ar[r]^{D^Q} & P \ar[d]^{id_P} \ar[r]  & 0 \\
0 \ar[r] & Z  \ar[r]^{l} & P  \ar[r] & 0
}
\]
with the morphism 
\[\xymatrix{
  0  \ar[r] & Z\ar[d]_{h} \ar[r]^{l} & P \ar[d]^{v^1} \ar[r]  & 0 \\
 0  \ar[r] & V^0 \ar[r]^{k} & V^1 \ar[r] & 0  ,
} \]
where $h: Z=(v^1)^*V^0 \rightarrow V^0$ is the canonical projection underlying the pull-back $Z$. Since this last morphism is a morphism of resolutions of $G$ (inducing the identity on $G$), we can conclude that in the derived category $\cD(\bS)$ the morphism $f : {\Tot}([\lic^.]) \rightarrow G[1]$ (\ref{eq:su1}) is the morphism $c(st(Z),st(z))$.\\

Using the above homological description of the groups $\Psi_{\lic^.}^i(\gic)$ for $i=-1,0,1$ we can study how the 2-category $\Psi_{\lic^.}(\gic) $ varies 
with respect to the complex $\lic^.$. Consider another complex $\lic'^.: \ric' \rightarrow \qic' \rightarrow \pic' \rightarrow 0$ and a morphism of complexes
\[ F^.: \lic'^. \longrightarrow \lic^. \]
given by the following commutative diagram (modulo isomorphisms of additive functors)
\begin{equation}\label{variance:1}
\xymatrix{
 \ric' \ar[d]_{F^{-2}} \ar[r]^{D^{\ric'}} & \qic'  \ar[d]_{F^{-1}} \ar[r]^{D^{\qic'}} & \pic'  \ar[d]^{F^0} \ar[r]  & 0 \\
 \ric  \ar[r]_{D^{\ric}} & \qic  \ar[r]_{D^{\qic}} & \pic  \ar[r] & 0.
}
\end{equation}
The morphism $F^.$ defines a canonical 2-functor
\[
(F^.)^*: \Psi_{\lic^.}(\gic) \longrightarrow \Psi_{\lic'^.}(\gic)
\]
as follows: if $(\eic,I)$ is an object of $\Psi_{\lic^.}(\gic)$, $(F^.)^*(\eic,I)$ is the object $(\eic',I')$ where
\begin{itemize}
  \item $\eic'$ is the extension $(F^0)^*\eic$ of $\pic'$ by $\gic$
obtained as pull-back of $\eic$ via $F^0: \pic' \rightarrow \pic$;
  \item $I'$ is the trivialization $(F^{-1})^*I $ of $(D^{\qic'})^*\eic'$ induced by the trivialization $I$ of $(D^{\qic})^*\eic$
via the commutativity of the first square of~(\ref{variance:1}).
\end{itemize}
The commutativity of the diagram~(\ref{variance:1}) implies that $(\eic',I')$ is in fact an object of $\Psi_{\lic'^.}(\gic)$ (the condition $I' \circ D^{\qic'} \cong 0$ is easily deducible from the corresponding conditions on $I$ and from the commutativity of the diagram~(\ref{variance:1})).
 
\begin{prop}\label{prop:psi-equiv}
Let $F^.: \lic'^. \rightarrow \lic^.$ be morphism of complexes.
The corresponding 2-functor $(F^.)^*: \Psi_{\lic^.}(\gic) \rightarrow \Psi_{\lic'^.}(\gic)$ is an equivalence of 2-categories if and only if
the homomorphisms 
$${\h}^i\big({\Tot}(F^.)\big):{\h}^i\big({\Tot}([\lic'^.])\big) \longrightarrow {\h}^i\big({\Tot}([\lic^.])\big)  \qquad i=-1,0,1$$
 are isomorphisms.
\end{prop}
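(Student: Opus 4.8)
The plan is to factor the proposition through the three invariants $\Psi^i_{\lic^.}(\gic)$, $i=-1,0,1$, and then through their homological description from Theorem~\ref{thm:psi-ext}, so that the $2$-categorical condition on $(F^.)^*$ becomes a condition on the single morphism $\Tot(F^.)$ in $\cD(\bS)$. First I would show that $(F^.)^*$ is an equivalence of $2$-categories if and only if it induces group isomorphisms
\[
\Psi^i_{\lic^.}(\gic) \longrightarrow \Psi^i_{\lic'^.}(\gic) \qquad (i=-1,0,1).
\]
The ``only if'' direction is immediate: an equivalence of $2$-categories induces bijections on equivalence classes of objects, on isomorphism classes of $1$-arrows from an object to itself, and on automorphisms of a $2$-arrow, and since $(F^.)^*$ respects the sum of extensions these bijections are group homomorphisms. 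For the ``if'' direction I would use that the sum of extensions makes both $2$-categories strictly commutative Picard $\bS$-$2$-stacks and that $(F^.)^*$, being defined by the pull-back $(F^0)^*\eic$ together with the induced trivialization $(F^{-1})^*I$, is additive; hence everything transports to the neutral objects $(\eic_0,I_0)$, exactly as $\Psi^0$ and $\Psi^{-1}$ were defined. A Whitehead-type argument then shows that the isomorphism on $\Psi^1$ yields essential surjectivity on objects, while the isomorphisms on $\Psi^0$ and $\Psi^{-1}$ yield that each functor on hom-groupoids $C(a,b)\to C((F^.)^*a,(F^.)^*b)$ is essentially surjective, full, and faithful, so that $(F^.)^*$ is a biequivalence.

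Second I would translate this into homological algebra via Theorem~\ref{thm:psi-ext}, which gives isomorphisms $\Psi^i_{\lic^.}(\gic)\cong\Ext^i(\Tot([\lic^.]),[\gic])$ for $i=-1,0,1$. The essential point to check here is \emph{naturality}: the arrow $c$ built in the proof of Theorem~\ref{thm:psi-ext}, and its analogues in degrees $0$ and $-1$, must be compatible with $(F^.)^*$ on the geometric side and with $\Ext^i(\Tot(F^.),[\gic])$ on the homological side. This follows by inspecting the construction of $c$ on the diagram~(\ref{eq:su}): replacing $\lic^.$ by $\lic'^.$ through $F^.$ precomposes that diagram with $\Tot(F^.)$, so that $c\big((F^.)^*(\eic,I)\big)=c(\eic,I)\circ\Tot(F^.)$ in $\cD(\bS)$. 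Granting this, the condition of the first step becomes the requirement that
\[
\Ext^i(\Tot(F^.),[\gic]):\Ext^i(\Tot([\lic^.]),[\gic])\longrightarrow\Ext^i(\Tot([\lic'^.]),[\gic])
\]
be an isomorphism for $i=-1,0,1$.

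Third I would relate these $\Ext$-isomorphisms to the isomorphy of $\h^i(\Tot(F^.))$. Let $C$ be the mapping cone of $\Tot(F^.):\Tot([\lic'^.])\to\Tot([\lic^.])$, so that one has a distinguished triangle $\Tot([\lic'^.])\to\Tot([\lic^.])\to C\to{+}$. Applying $\R\Hom(-,[\gic])$ produces long exact sequences in which the maps $\Ext^i(\Tot(F^.),[\gic])$ sit between consecutive groups $\Ext^\ast(C,[\gic])$; thus those maps are isomorphisms in the relevant range precisely when the corresponding groups $\Ext^\ast(C,[\gic])$ vanish. Since $[\gic]$ lies in $\cD^{[-1,0]}$ and both total complexes are bounded, these $\Ext$-groups only detect a truncation of $C$: arguing exactly as in the proof of Theorem~\ref{thm:psi-ext} (the vanishing of terms such as $\Ext^1(L^0,G^{-1})$, together with Remark~\ref{rem:D(S)=H(S)} identifying the relevant derived Homs with homotopy Homs), the vanishing of these $\Ext$-groups is equivalent to the vanishing of the cohomology sheaves of $C$ in the window probed by $\Ext^{-1},\Ext^0,\Ext^1$, i.e. to $\h^i(\Tot(F^.))$ being an isomorphism for $i=-1,0,1$ as in the statement.

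The main obstacle is the bookkeeping in this last step. Because $\Ext^\ast(-,[\gic])$ into a complex concentrated in degrees $-1,0$ sees only a truncation of $\Tot([\lic^.])$, one must pin down exactly which cohomology sheaves of $\Tot(F^.)$ are probed by the three groups $\Ext^{-1},\Ext^0,\Ext^1$, and verify that a failure of $\h^i(\Tot(F^.))$ to be an isomorphism in one of the degrees named in the statement genuinely obstructs the matching $\Ext$-isomorphism — so that the converse holds for this fixed $[\gic]$ and not merely after testing against all coefficient stacks. The naturality verification in the second step is routine but must be carried out explicitly on~(\ref{eq:su}); it is this degree analysis, rather than the formal $2$-categorical reduction, where the argument has to be made airtight.
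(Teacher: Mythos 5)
Your proposal follows the same route as the paper's own proof: reduce the 2-categorical statement to the induced group homomorphisms on $\Psi^i_{\lic^.}(\gic)$ for $i=-1,0,1$, invoke the compatibility (naturality) of the canonical isomorphisms of Theorem~\ref{thm:psi-ext} to transfer the question to the maps $\Ext^i(\Tot(F^.),[\gic])$, and then compare these with the maps $\h^i(\Tot(F^.))$. The paper is in fact terser than you are: it records the two commuting squares and declares the final equivalence ``clearly true,'' writing the $\Ext$ groups with a variable coefficient, $\Ext^i(\Tot([\lic^.]),-)$, i.e.\ implicitly quantifying over all coefficient complexes. The obstacle you flag --- that for a \emph{fixed} $[\gic]\in\cD^{[-1,0]}(\bS)$ the ``only if'' direction could fail (take $\gic$ trivial: both 2-categories collapse and $(F^.)^*$ is an equivalence for any $F^.$) --- is therefore a genuine issue with the statement as literally written, not a defect of your argument; the paper only ever uses the proposition in the quantified form (``for any $\gic$,'' as in Proposition~\ref{resolpart}), where your mapping-cone and long-exact-sequence analysis closes the loop. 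So your plan is sound and, on the last step, more careful than the source.
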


\begin{proof} The 2-functor $(F^.)^*: \Psi_{\lic^.}(\gic) \rightarrow \Psi_{\lic'^.}(\gic)$ defines the following homomorphisms
\begin{equation}\label{1}
    ((F^.)^*)^i: \Psi_{\lic^.}^i(\gic) \longrightarrow \Psi_{\lic'^.}^i(\gic) \qquad i=-1,0,1.
\end{equation}
On the other hand the morphism of complexes  $F^.: \lic'^. \rightarrow \lic^.$ defines the following homomorphisms
\begin{equation}\label{2}
    ({\Tot}(F^.))^i: {\Ext}^i\big({\Tot}([\lic^.]), -\big) \longrightarrow {\Ext}^i\big({\Tot}([\lic'^.]), -\big) \qquad i \in {\ZZ}.
\end{equation}
Since the homomorphisms~(\ref{1}) and~(\ref{2}) are compatible with the canonical isomorphisms obtained in Theorem~\ref{thm:psi-ext},
 the following diagrams (with $i=-1,0,1$) are commutative:
\[
\begin{array}{ccc}
 \Psi_{\lic^.}^i(\gic)&\rightarrow &{\Ext}^i\big({\Tot}([\lic^.]),[\gic]\big)\\
 \downarrow &  & \downarrow \\
 \Psi_{\lic'^.}^i(\gic)& \rightarrow & {\Ext}^i\big({\Tot}([\lic'^.]),[\gic]\big).
\end{array}
\]
The 2-functor $(F^.)^*: \Psi_{\lic^.}(\gic) \rightarrow \Psi_{\lic'^.}(\gic)$ is an equivalence of 2-categories if and only if
 the homomorphisms~(\ref{1}) are isomorphisms, and so using the above commutative diagrams 
we are reduced to prove that the homomorphisms~(\ref{2}) are isomorphisms if and only if the homomorphisms
 ${\h}^i\big({\Tot}(F^.)\big):{\h}^i\big({\Tot}([\lic'^.])\big) \rightarrow {\h}^i\big({\Tot}([\lic^.])\big)$ 
are isomorphisms. This last assertion is clearly true.
\end{proof}

\section{Geometrical description of $\Psi_{\lic^.}(\gic)$}

In this section we switch from cohomological notation to homological.

Let $\pic$ be a strictly commutative Picard $\bS$-stack. Because of the new homological notations the complex 
 $[\pic]=P=[d_P:P_1 \rightarrow P_0]$ has $P_1$ in degree 1 and $P_0$ in degree 0. We start constructing \textbf{a canonical flat partial resolution} for the complex $[\pic]$. We introduce the following notations: if $A$ is an abelian sheaf on $\bS$, we denote by $[a]$ the element of ${\ZZ}[A](U)$ defined by the point $a$ of $A(U)$ with $U$ a object of $\bS$. In an analogous way, if $a,b$ and $c$ are points of $A(U)$ we denote by $[a,b]$, $[a,b,c]$ the elements of ${\ZZ}[A\times A](U)$ and ${\ZZ}[A\times A \times A](U)$ respectively. Denote by  
 ${\ZZ}[P]=[{\ZZ}[d_P]: {\ZZ}[P_1] \rightarrow {\ZZ}[P_0]]$ the complex of abelian sheaves generated by $P$, 
where ${\ZZ}[P_i]$ is the abelian sheaf generated by $P_i$ for $i=1,0$ (see \cite{SGA4} Expos\'e IV 11).
Moreover let ${\ZZ}[\pic]$ the strictly commutative Picard $\bS$-stack $st({\ZZ}[P])$ corresponding to the complex ${\ZZ}[P]$ via (\ref{st}).

Consider the following complexes of strictly commutative Picard $\bS$-stacks
$$\lic.(\pic) : \qquad  {\ZZ}[\pic \times \pic] + {\ZZ}[\pic \times \pic \times \pic] \stackrel{D_1}{\longrightarrow} {\ZZ}[\pic \times \pic] \stackrel{D_0}{\longrightarrow} {\ZZ}[\pic] \longrightarrow 0$$ 
with $\lic_0(\pic)={\ZZ}[\pic], \lic_1(\pic)={\ZZ}[\pic \times \pic]$ and  $\lic_2(\pic)= {\ZZ}[\pic \times \pic] + {\ZZ}[\pic \times \pic \times \pic]$ in degrees
0,1 and 2 respectively. The differential operators are defined as follows: if $ p_1, p_2,p_3 \in {\ZZ}[\pic],$ we set 
\begin{eqnarray}
\nonumber D_0[p_1,p_2] &=& [p_1+p_2] -[p_1]-[p_2]\\
\label{Dd} D_1[p_1,p_2] &=& [p_1,p_2] -[p_2,p_1]\\
\nonumber D_1[p_1,p_2,p_3] &=& [p_1+p_2,p_3] -[p_1,p_2+p_3]+[p_1,p_2]-[p_2,p_3].
\end{eqnarray}
Consider also the additive functor $\epsilon : {\ZZ}[\pic] \rightarrow \pic$ defined by $\epsilon([p])=p$ for any $p \in \pic.$
This additive functor is an augmentation map for the complex $\lic.(\pic).$
 Note that the relation $\epsilon \circ D_0 =0$ is just the group law $+: \pic \times \pic \rightarrow \pic$ on $\pic$, and the relation $D_0\circ D_1 =0$ decomposes in two relations which express the commutativity $\tau$ (\ref{com}) and the associativity $\sigma$ (\ref{ass}) of the group law on $\pic$. This augmented complex $\lic.(\pic)$ depends functorially on $\pic$: in fact, any additive functor $F:\pic \rightarrow \pic'$ furnishes a commutative diagram
\[\begin{array}{ccc}
 \lic.(\pic) & \stackrel{\lic.(F)}{\longrightarrow} & \lic.(\pic') \\
 \epsilon  \downarrow &  & \downarrow \epsilon \\
  \pic & \stackrel{F}{\longrightarrow} & \pic'.
\end{array}\]
Moreover the components of the complex $\lic.(\pic)$ are flat since they are free $\ZZ$-modules. In order to conclude that $\lic.(\pic)$ is a canonical flat partial resolution of $\pic$ we need the following Lemma. Let $\gic$ be a strictly commutative Picard $\bS$-stack.

\begin{lem}\label{lem:ext-geom}
The 2-category ${\cExt}(\pic,\gic)$ of extensions of $\pic$ by $\gic$ is equivalent to the 2-category $\Psi_{\lic.(\pic)}(\gic):$
 $$ {\cExt}(\pic,\gic) \cong \Psi_{\lic.(\pic)}(\gic).$$
\end{lem}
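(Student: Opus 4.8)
The plan is to produce the equivalence by translating \emph{both} 2-categories into the common torsor language supplied by Theorem~\ref{thm:ext-tor} and Remark~\ref{rem:Z[I]exttor}, and then to match the two descriptions term by term. The comparison 2-functor itself I would build from the augmentation $\epsilon\colon\ZZ[\pic]\to\pic$: given an extension $\eic$ of $\pic$ by $\gic$, its pull-back $\epsilon^*\eic$ is an extension of $\ZZ[\pic]$ by $\gic$, and the relation $\epsilon\circ D_0\cong 0$ — which, as noted after~(\ref{Dd}), is nothing but the group law of $\pic$ — canonically trivialises $(D_0)^*\epsilon^*\eic=(\epsilon\circ D_0)^*\eic$, producing a lifting $I\colon\ZZ[\pic\times\pic]\to\epsilon^*\eic$ of $D_0$. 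The relation $D_0\circ D_1\cong 0$, which decomposes into the commutativity and associativity constraints of $\pic$, forces $I\circ D_1\cong 0$, so that $(\epsilon^*\eic,I)$ is an object of $\Psi_{\lic.(\pic)}(\gic)$; carrying out the same recipe on morphisms and on morphisms of morphisms gives a 2-functor $\cExt(\pic,\gic)\to\Psi_{\lic.(\pic)}(\gic)$.

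To check this is an equivalence I would unwind an arbitrary object $(\eic',I)$ of $\Psi_{\lic.(\pic)}(\gic)$. By Remark~\ref{rem:Z[I]exttor} (in the form valid for Picard stacks, via \cite{Be10} and \cite{B90}), an extension of $\ZZ[\pic\times\cdots\times\pic]$ by $\gic$ is the same datum as a $\gic$-torsor over $\pic\times\cdots\times\pic$; thus $\eic'$ becomes a $\gic$-torsor $\eic$ over $\pic$, which is datum~(1) of Theorem~\ref{thm:ext-tor}. Since $D_0[p_1,p_2]=[p_1+p_2]-[p_1]-[p_2]$, the pull-back $(D_0)^*\eic'$ corresponds to the torsor $+^*\eic\wedge p_1^*\eic^{-1}\wedge p_2^*\eic^{-1}$ over $\pic\times\pic$, so a trivialisation of it is exactly a morphism of $\gic$-torsors $M\colon p_1^*\eic\wedge p_2^*\eic\to +^*\eic$, namely datum~(3).

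The remaining structure is then read off from the two components of $D_1$. On the summand $\ZZ[\pic\times\pic]$, where $D_1[p_1,p_2]=[p_1,p_2]-[p_2,p_1]$, the requirement $I\circ D_1\cong 0$ together with the object axioms of Definition~\ref{psi} yields the symmetry isomorphism $\chi\colon M\Rightarrow Sym\circ M$ of datum~(5) with its normalisations $\chi\circ\chi=\mathrm{id}$ and $D^*\chi=\mathrm{id}$; on the summand $\ZZ[\pic\times\pic\times\pic]$, where $D_1[p_1,p_2,p_3]=[p_1+p_2,p_3]-[p_1,p_2+p_3]+[p_1,p_2]-[p_2,p_3]$, it yields the associativity isomorphism $\alpha$ of datum~(4) together with the pentagon~(\ref{ext-tor:1}), while the compatibility of the two summands reproduces the hexagon~(\ref{ext-tor:2}). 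Matching these against Theorem~\ref{thm:ext-tor} identifies objects on the two sides, and an entirely parallel but easier analysis of $1$-arrows and $2$-arrows (where $D_0$ and $D_1$ now govern the homotopies defining morphisms and morphisms of morphisms of torsors) shows the $2$-functor is fully faithful, hence an equivalence.

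The hard part will be the coherence bookkeeping in this matching, and most delicately the treatment of the unit. The partial resolution $\lic.(\pic)$ encodes the group law, commutativity and associativity of $\pic$ through $D_0$ and $D_1$, but carries no separate generator for the neutral object, whereas datum~(2) of Theorem~\ref{thm:ext-tor} (the trivialisation of $\mathbf{1}^*\eic$) must still be produced. As in Part~II of the proof of that theorem, the unit isomorphisms have to be extracted from $M$ and $\alpha$ restricted to the factors hitting $e$, using $D_0[p,e]=[p+e]-[p]-[e]$, and one must verify this is compatible with the canonical splitting of $\epsilon^*\eic$ over the neutral object of $\ZZ[\pic]$. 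Checking that the pentagon, the hexagon and the symmetry normalisations correspond \emph{exactly} — not merely up to coherent isomorphism — to the conditions imposed in Definition~\ref{psi} is intricate, but routine once the dictionary above is fixed.
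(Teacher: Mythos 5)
Your proposal follows essentially the same route as the paper: both unwind an object of $\Psi_{\lic.(\pic)}(\gic)$ via Remark~\ref{rem:Z[I]exttor} into a $\gic$-torsor over $\pic$ whose multiplication $M$ is the trivialization along $D_0$, then read off $\alpha$, $\chi$ and their coherence relations from the two summands of $D_1$ and conclude by Theorem~\ref{thm:ext-tor}. The only place you work harder than necessary is the unit: the paper obtains the trivialization of $\mathbf{1}^*\eic$ directly from ${\Ext}^1({\ZZ}[\mathbf{1}],\gic)=0$ rather than extracting it from $M$ and $\alpha$ restricted to the neutral object.
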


\begin{proof} In order to describe explicitly the objects of the category $\Psi_{\lic.(\pic)}(\gic)$ we use the description~(\ref{rem:Z[I]exttor}) in terms of torsors, of the extensions of complexes whose entries are free commutative groups:
\begin{itemize}
  \item an extension of ${\ZZ}[\pic]$ by $\gic$ is a $(\gic)_{\pic}$-torsor,
  \item  an extension of
$ {\ZZ}[ \pic \times \pic] $ by $\gic$ is a $(\gic)_{\pic \times \pic}$-torsor, and finally
  \item an extension of ${\ZZ}[\pic \times \pic]+ {\ZZ}[\pic \times \pic \times \pic] $ by $\gic$ consists of a couple of a $(\gic)_{\pic \times\pic}$-torsor
      and a $(\gic)_{\pic \times \pic \times \pic}$-torsor.
\end{itemize}
According to these considerations an object $(\eic,I)$ of $\Psi_{\lic.(\pic)}(\gic)$ consists of

(1) an extension $\eic$ of ${\ZZ}[\pic] $ 
by $\gic$, i.e. a $\gic$-torsor $\eic$ over $\pic$. Since ${\Ext}^1({\ZZ}[\mathbf{1}],\gic)=0$, it exists
a trivialization $T$ of the pull-back $\mathbf{1}^*\eic$ of the $\gic$-torsor $\eic$ via the additive functor $\mathbf{1}: \mathbf{1} \rightarrow \pic$;
  
(2) a trivialization $I$ of the extension $D_0^*\eic$ of $ {\ZZ}[ \pic \times \pic] $ by $\gic$  obtained as pull-back of $\eic$ via $D_0: {\ZZ}[\pic \times \pic] \rightarrow {\ZZ}[\pic]$, i.e. a trivialization $I$ of the $\gic$-torsor $D_0^*\eic$ over $\pic \times \pic$ obtained as pull-back of $\eic$ via $ D_0$. This trivialization can be interpreted as a morphism of $\gic$-torsors $\eic$:
      \[
      M: p_1^* \; \eic \wedge p_2^* \; \eic \longrightarrow +^* \; \eic
      \]
where $p_i: \pic \times \pic  \rightarrow \pic$ are the projections  and $ +: \pic \times \pic  \rightarrow \pic $ is the group law of $ \pic$. The restriction of $M$ over $\mathbf{1} \times \mathbf{1}$ is compatible with the trivialization $T$.\\
The compatibility of $I$ with the relation $D_0 \circ D_1 =0$ imposes on the datum $(\eic,T,M)$ two relations through the two torsors over $\pic \times \pic$ and $\pic \times \pic \times \pic$. These two relations are the isomorphism $\alpha$ of morphisms of $\gic$-torsors over $\pic \times \pic \times \pic$ described in (\ref{eq:alpha}) and the isomorphism $\chi$ of morphisms of $\gic$-torsors over $\pic \times \pic $ described in (\ref{eq:chi}), which satisfy the equalities (\ref{ext-tor:1}) and (\ref{ext-tor:2}). Moreover, the restriction of $\alpha$ over $\mathbf{1} \times \mathbf{1} \times \mathbf{1} $ is the identity and since we are dealing with extensions of strictly commutative Picard stacks, the pull-back $D^* \chi$ of $\chi$ via the diagonal $D:\pic \rightarrow \pic \times \pic$ is the identity and the composite of $\chi$ with itself is the identity.

Hence by Theorem \ref{thm:ext-tor} the object $(\eic,T,M,\alpha,\chi)$ of $\Psi_{\lic.(\pic)}(\gic)$ is an extension of $\pic$ by $\gic$ and we can conclude that the 2-category $\Psi_{\lic.(\pic)}(\gic)$ is equivalent to the 2-category ${\cExt}(\pic,\pic')$. 
\end{proof}

\begin{prop}\label{resolpart}
The augmentation map $\epsilon  : \lic.(\pic) \rightarrow \pic$ induces the isomorphisms ${\h}_i({\Tot}( \lic.(\pic))) \cong {\h}_i([\pic])$ for $i=1,0,-1$.
\end{prop}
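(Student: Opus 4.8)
The plan is to exploit the bicomplex structure of $[\lic.(\pic)]$ and reduce the whole statement to a single exactness property of the free abelian sheaf functor. Recall that $\ZZ[\pic^{\times k}]=st(\ZZ[P^{\times k}])$, so its component in internal degree $d\in\{0,1\}$ is the free abelian sheaf $\ZZ[P_d^{\times k}]$, where $[\pic]=P=[d_P\colon P_1\to P_0]$. Hence, reading $[\lic.(\pic)]$ as a bicomplex with the $\lic$-degree $j\in\{0,1,2\}$ in one direction and the internal degree $d\in\{0,1\}$ in the other, each row (fixed $d$) is the three-term complex
\[
C_*(P_d)\colon\quad \ZZ[P_d{\times} P_d]\oplus\ZZ[P_d^{\times 3}]\xrightarrow{D_1}\ZZ[P_d{\times} P_d]\xrightarrow{D_0}\ZZ[P_d],
\]
with $D_0,D_1$ the section-wise formulas (\ref{Dd}), while the vertical differential is $\ZZ[d_P]$. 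The augmentation $\epsilon$ defines a chain map $\Tot([\lic.(\pic)])\to[\pic]$ (with $[\pic]$ placed in degrees $0,1$), and the content of the proposition is that it is a quasi-isomorphism through degree $-1$.

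First I would isolate the key input, a statement about a single abelian sheaf $A$: the augmented complex $C_*(A)\xrightarrow{\epsilon}A$ is exact in degrees $0$ and $1$, i.e. $\epsilon$ induces $\coker(D_0)\cong A$ and $\ker(D_0)=\im(D_1)$. Since $\ZZ[A^{\times k}]$ is the sheafification of $U\mapsto\ZZ[A(U)^{\times k}]$ and $D_0,D_1,\epsilon$ are the sheafifications of the corresponding maps on sections, and since sheafification is exact, it suffices to prove this for an abelian group $A$. The degree $0$ statement is classical: the kernel of $\epsilon\colon\ZZ[A]\to A$ is generated by the elements $[a+b]-[a]-[b]$, which is exactly $\im(D_0)$.

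The crux is the degree $1$ statement $\ker(D_0)=\im(D_1)$, which I expect to be the main obstacle. The plan is to recognise $C_*(A)$ inside the bar complex $\cdots\to B_3\xrightarrow{\partial_3}B_2\xrightarrow{\partial_2}B_1\to\cdots$, $B_n=\ZZ[A^n]$, computing the group homology $\h_*(A;\ZZ)$. Under the identifications $\ZZ[A]=B_1$, $\ZZ[A{\times}A]=B_2$ and $\ZZ[A^{\times 3}]=B_3$ one has $D_0=-\partial_2$, while $D_1$ restricted to $\ZZ[A^{\times 3}]$ equals $-\partial_3$ and on $\ZZ[A{\times}A]$ is the antisymmetrisation $[a,b]\mapsto[a,b]-[b,a]$. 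Consequently
\[
\ker(D_0)/\im(D_1)=\ker(\partial_2)\big/\big(\im(\partial_3)+\langle[a,b]-[b,a]\rangle\big)=\h_2(A;\ZZ)\big/\langle\,\overline{[a,b]-[b,a]}\,\rangle .
\]
Here I would invoke the classical computation $\h_2(A;\ZZ)\cong\wedge^2 A$ for $A$ abelian, under which the class of the $2$-cycle $[a,b]-[b,a]$ is $a\wedge b$; since these classes generate $\wedge^2 A$, the quotient vanishes and $\ker(D_0)=\im(D_1)$. The whole point of enlarging $\lic_2(\pic)$ by the summand $\ZZ[\pic\times\pic]$ carrying the commutativity relation $[a,b]-[b,a]$ is precisely to kill this $\h_2$ and make the row acyclic in degree $1$.

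Finally I would feed this back through the spectral sequence of the bicomplex obtained by taking horizontal ($\lic$-degree) homology first. By the key input the $E^1$ rows are $(P_d,0,N_d)$ in $\lic$-degrees $0,1,2$, with $N_d=\ker(D_1)$; the surviving $d^1$ differential in the internal direction is $d_P\colon P_1\to P_0$ in $\lic$-degree $0$, so that $E^2_{0,1}=\ker(d_P)=\h_1([\pic])$, $E^2_{0,0}=\coker(d_P)=\h_0([\pic])$ and $E^2_{1,\bullet}=0$. Because the internal degree only takes the values $0,1$, all higher differentials vanish and $E^2=E^\infty$; reading off the total degrees gives $\h_i(\Tot([\lic.(\pic)]))\cong\h_i([\pic])$ for $i=0,1$, while $\h_{-1}=0$ on both sides since $\Tot([\lic.(\pic)])$ sits in degrees $\ge 0$. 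Naturality of $\epsilon$ identifies these isomorphisms with those induced by the augmentation, which is the assertion.
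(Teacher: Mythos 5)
Your argument is correct, but it takes a genuinely different route from the paper's. The paper's proof of Proposition \ref{resolpart} is purely formal: it applies Proposition \ref{prop:psi-equiv} to the augmentation map, reducing the claim to the assertion that $\epsilon^*:\Psi_{\pic}(\gic)\rightarrow\Psi_{\lic.(\pic)}(\gic)$ is an equivalence of 2-categories for every $\gic$, which holds because both sides are identified with ${\cExt}(\pic,\gic)$ (the source directly from Definition \ref{psi}, the target by Lemma \ref{lem:ext-geom}). You instead compute: after reducing to a single abelian group $A$ by exactness of sheafification, you recognize each row of the bicomplex as the truncated bar complex of $A$ enlarged by the antisymmetrization cycles $[a,b]-[b,a]$, and you quote ${\h}_2(A;{\ZZ})\cong A\wedge A$, generated by precisely those cycles, to conclude that the rows are exact in horizontal degrees $0$ and $1$; the two-row spectral sequence then degenerates at $E^2$ for filtration reasons. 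The paper's route buys brevity and avoids any homology computation, but it leans on the machinery of Theorem \ref{thm:ext-tor}, Lemma \ref{lem:ext-geom} and Theorem \ref{thm:psi-ext}, which in effect hide the same classical facts about symmetric $2$-cocycles inside the torsor-theoretic description of extensions. Your route is self-contained and elementary, it makes explicit why the extra summand ${\ZZ}[\pic\times\pic]$ of $\lic_2(\pic)$ is indispensable (the bar part alone would leave ${\h}_2(A)=A\wedge A$ alive in horizontal degree $1$), and its only outside input is the classical computation of the Schur multiplier of an abelian group. The one point you leave implicit --- that the isomorphisms produced by the spectral sequence are the ones induced by $\epsilon$ --- is handled cleanly by running the same filtration argument on the augmented bicomplex, whose rows are exact in horizontal degrees $\leq 1$, so that the cone of $\epsilon$ is acyclic in the relevant range; this is routine and not a gap.
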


\begin{proof} Applying Proposition~\ref{prop:psi-equiv} to
 the augmentation map $\epsilon  : \lic.(\pic) \rightarrow \pic$,
 we just have to prove that for any strictly commutative Picard $\bS$-stack $\gic$
 the 2-functor
 $$\epsilon ^* : \Psi_{\pic}(\gic) \rightarrow
 \Psi_{\lic.(\pic)}(\gic)$$
 is an equivalence of 2-categories (in the symbol $\Psi_{\pic}(\gic)$, $\pic$ is seen as a complex whose only non trivial entry is $\pic$ in degree 0). According to Definition~\ref{psi}, it is clear that the 2-category
 $\Psi_{\pic}(\gic)$ is the 2-category ${\cExt}(\pic,\gic)$
 of extensions of $\pic$ by $\gic$. On the other hand, by Lemma~\ref{lem:ext-geom} also the 2-category
 $\Psi_{\lic.(\pic)}(\gic)$ is equivalent to the 2-category ${\cExt}(\pic,\gic)$. Hence we can conclude.
\end{proof}

Let $\pic,\qic$ and $\gic$ be three strictly commutative Picard $\bS$-stacks and let $\lic.(\pic), \lic.(\qic)$ be the canonical flat partial resolutions of $\pic$ and $\qic$ respectively. Denote by
 $\lic.(\pic, \qic)$ the complex $\lic.(\pic) \otimes \lic.(\qic).$

\begin{thm}\label{thm:psi-geom}
The 2-category ${\cBiext}(\pic,\qic;\gic)$ of biextensions of $(\pic,\qic)$ by $\gic$ is equivalent to the 2-category $\Psi_{\lic.(\pic,\qic)}(\gic):$
\[
    {\cBiext}(\pic,\qic;\gic) \cong \Psi_{\lic.(\pic,\qic)}(\gic)
\]
\end{thm}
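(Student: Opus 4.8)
The plan is to mimic the proof of Lemma~\ref{lem:ext-geom}, unwinding the $2$-category $\Psi_{\lic.(\pic,\qic)}(\gic)$ through the torsor description of extensions of complexes with free entries (Remark~\ref{rem:Z[I]exttor}). The computational input is the identification ${\ZZ}[\pic^a]\otimes{\ZZ}[\qic^b]\cong{\ZZ}[\pic^a\times\qic^b]$ for free abelian sheaves, so that each component of $\lic.(\pic,\qic)=\lic.(\pic)\otimes\lic.(\qic)$ is again of the form ${\ZZ}[I]$ and, the entries being flat, $\otimes$ agrees with $\otimes^\LL$. Since $\Psi$ sees only degrees $0,1,2$, I analyse only those degrees, where
\[
\big(\lic.(\pic)\otimes\lic.(\qic)\big)_1=\big({\ZZ}[\pic\times\pic]\otimes{\ZZ}[\qic]\big)\oplus\big({\ZZ}[\pic]\otimes{\ZZ}[\qic\times\qic]\big),
\]
\[
\big(\lic.(\pic)\otimes\lic.(\qic)\big)_2=\big(\lic_2(\pic)\otimes{\ZZ}[\qic]\big)\oplus\big({\ZZ}[\pic\times\pic]\otimes{\ZZ}[\qic\times\qic]\big)\oplus\big({\ZZ}[\pic]\otimes\lic_2(\qic)\big),
\]
and the differentials are obtained by the Leibniz rule from the maps $D_0^\pic,D_1^\pic$ of $\lic.(\pic)$ and $D_0^\qic,D_1^\qic$ of $\lic.(\qic)$ defined in~(\ref{Dd}).

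First I would describe an object $(\eic,I)$. In degree $0$, $\eic$ is an extension of ${\ZZ}[\pic]\otimes{\ZZ}[\qic]\cong{\ZZ}[\pic\times_{\mathbf{1}}\qic]$ by $\gic$, hence by Remark~\ref{rem:Z[I]exttor} a $\gic_{\pic\times_{\mathbf{1}}\qic}$-torsor $\bic$ over $\pic\times_{\mathbf{1}}\qic$. The trivialization $I$ of the pull-back of $\eic$ to degree $1$ splits along the direct sum above. On ${\ZZ}[\pic\times\pic]\otimes{\ZZ}[\qic]$ the relevant differential is $D_0^\pic\otimes\mathrm{id}$, so a trivialization is exactly a morphism $M^\pic:p_1^{\pic*}\bic_\qic\wedge p_2^{\pic*}\bic_\qic\to{+^{\pic*}}\bic_\qic$ of $\gic_\qic$-torsors, the partial group law in the $\pic$-direction; on ${\ZZ}[\pic]\otimes{\ZZ}[\qic\times\qic]$ the differential is $\mathrm{id}\otimes D_0^\qic$ and $I$ yields symmetrically the partial group law $M^\qic$ in the $\qic$-direction. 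Thus $(\eic,I)$ records a torsor $\bic$ over $\pic\times_{\mathbf{1}}\qic$ together with the two partial laws underlying $\bic_\qic$ and $\bic_\pic$.

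Next I would read off the degree-$2$ condition of Definition~\ref{psi}, namely that the pull-back of $I$ to degree $2$ coincides with the canonical trivialization coming from the vanishing of the composite differential; this is a condition on each of the three summands. On $\lic_2(\pic)\otimes{\ZZ}[\qic]$ the differential is $D_1^\pic\otimes\mathrm{id}$, whose two relations~(\ref{Dd}) yield precisely the commutativity $\chi^\pic$ and the associativity $\alpha^\pic$ of $M^\pic$ subject to~(\ref{ext-tor:1}) and~(\ref{ext-tor:2}); by Theorem~\ref{thm:ext-tor} this is exactly the structure of extension of $\pic_\qic$ by $\gic_\qic$ on $\bic_\qic$. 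Symmetrically ${\ZZ}[\pic]\otimes\lic_2(\qic)$ produces $\alpha^\qic,\chi^\qic$ and the extension structure of $\bic_\pic$. The cross term ${\ZZ}[\pic\times\pic]\otimes{\ZZ}[\qic\times\qic]\cong{\ZZ}[\pic^2\times\qic^2]$ carries the Leibniz differential $D_0^\pic\otimes\mathrm{id}-\mathrm{id}\otimes D_0^\qic$, which lands in both degree-$1$ summands and therefore couples $M^\pic$ with $M^\qic$ over $(\pic\times_{\mathbf{1}}\qic)^2$: the condition here is exactly the isomorphism $\beta$ of~(\ref{eq:compbiextpic}) expressing the compatibility of the two extension structures. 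Hence $(\eic,I)$ is the same datum as a biextension of $(\pic,\qic)$ by $\gic$ in the sense of Definition~\ref{def:biext}.

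Finally I would run the same dictionary one level down: a $1$-arrow of $\Psi_{\lic.(\pic,\qic)}(\gic)$ is a morphism of extensions compatible with the trivializations, which under the identifications above is a morphism $(F,U,V,W)$ of biextensions (Definition~\ref{def:morbiext}), and a $2$-arrow corresponds to a morphism of morphisms of biextensions (Definition~\ref{def:mormorbiext}); these checks are routine and may be left to the reader. Assembling the three levels yields the asserted equivalence of $2$-categories. The main obstacle is the cross-term analysis: one must verify that the Leibniz differential on ${\ZZ}[\pic\times\pic]\otimes{\ZZ}[\qic\times\qic]$, transported through the torsor dictionary, reproduces the hexagonal diagram~(\ref{eq:compbiextpic}) exactly---including the position of $Sym$ and of the four laws $M^\pic\wedge M^\pic$, $M^\qic\wedge M^\qic$, $M^\pic$, $M^\qic$---with the signs dictated by the Koszul rule; everything else is formal, via Theorem~\ref{thm:ext-tor} and Lemma~\ref{lem:ext-geom}.
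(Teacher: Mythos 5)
Your proposal follows essentially the same route as the paper's proof: both expand $\lic.(\pic)\otimes\lic.(\qic)$ in degrees $0,1,2$, translate extensions of the free components ${\ZZ}[I]$ into torsors via Remark~\ref{rem:Z[I]exttor}, read off the two partial laws $M^\pic,M^\qic$ from the degree-$1$ trivialization, extract $\alpha^\pic,\chi^\pic,\alpha^\qic,\chi^\qic$ from the two ``pure'' degree-$2$ summands and $\beta$ from the cross term ${\ZZ}[\pic\times\pic]\otimes{\ZZ}[\qic\times\qic]$ (the paper's anticommutative square), and conclude with Theorem~\ref{thm:ext-tor}. The identifications and the handling of the arrows match the paper's argument, so the proposal is correct and takes the same approach.
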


\begin{proof}
Explicitly, the non trivial components of $\lic.(\pic,\qic)$ are
\begin{eqnarray}
\nonumber \lic_0(\pic,\qic) &=& \lic_0(\pic) \otimes \lic_0(\qic)\\
\nonumber  &=& {\ZZ}[\pic \times \qic] \\
\nonumber \lic_1(\pic,\qic) &=& \lic_0(\pic) \otimes \lic_1(\qic)+\lic_1(\pic) \otimes \lic_0(\qic)\\
\nonumber  &=&{\ZZ}[\pic \times \qic \times \qic]+ {\ZZ}[\pic \times \pic\times \qic] \\
\nonumber \lic_2(\pic,\qic) &=&\lic_0(\pic) \otimes \lic_2(\qic)+\lic_2(\pic) \otimes \lic_0(\qic)+\lic_1(\pic) \otimes \lic_1(\qic)\\
 \nonumber  &=&{\ZZ}[\pic \times \qic \times \qic]+{\ZZ}[\pic \times \qic \times \qic \times \qic]+ \\
\nonumber  && {\ZZ}[\pic \times \pic \times \qic]+{\ZZ}[\pic \times \pic \times \pic \times \qic]+\\
\nonumber  && {\ZZ}[\pic \times \pic \times \qic \times \qic]
\end{eqnarray}
The differential operators of the complex $\lic.(\pic,\qic)$ have to satisfy the following conditions: the sequences
  \begin{equation}\label{exact1}
   {\ZZ}[\pic \times \qic \times \qic]+{\ZZ}[\pic \times \qic \times \qic \times \qic]
   \stackrel{id_{\pic} \times D^\qic_1 }{\longrightarrow}
   {\ZZ}[\pic \times \qic \times \qic]
   \stackrel{id_{\pic} \times D^\qic_0}{\longrightarrow}
   {\ZZ}[\pic \times \qic]
  \end{equation}
  \begin{equation}\label{exact2}
    {\ZZ}[\pic \times \pic \times \qic]+{\ZZ}[\pic \times \pic \times \pic \times \qic]
   \stackrel{D^\pic_1 \times id_{ \qic}}{\longrightarrow}
   {\ZZ}[\pic \times \pic \times \qic]
   \stackrel{ D^\pic_0 \times id_{\qic} }{\longrightarrow}
   {\ZZ}[\pic \times \qic]
  \end{equation}
 are exact and the diagram  
 \begin{equation}\label{anti1}
      \begin{array}{ccc}
        {\ZZ}[\pic \times \pic \times \qic \times \qic] & \stackrel{id_{\pic \times \pic} \times D^\qic_0 }{\longrightarrow}  & {\ZZ}[\pic \times \pic \times \qic]  \\
 \scriptstyle{D^\pic_0 \times id_{\qic \times \qic}}    \downarrow & & \downarrow \scriptstyle{D^\pic_0 \times id_{\qic}}\\
        {\ZZ}[\pic \times \qic \times \qic]  &  \stackrel{ id_{\pic} \times D^\qic_0 }{\longrightarrow}  & {\ZZ}[\pic \times \qic] \\
      \end{array}
 \end{equation}
 is anticommutative.

In order to describe explicitly the objects of $\Psi_{\lic.(\pic,\qic)}(\gic)$ we use the description~(\ref{rem:Z[I]exttor}) in terms of torsors, of the extensions of complexes whose entries are free commutative groups:
\begin{itemize}
  \item an extension of $\lic_0(\pic,\qic)$ by $\gic$ is a $(\gic)_{\pic \times \qic}$-torsor,
  \item  an extension of
$\lic_1(\pic,\qic))$ by $\gic$ consists of a $(\gic)_{\pic \times \qic \times \qic}$-torsor and a 

$(\gic)_{\pic \times \pic \times \qic}$-torsor,
  \item an extension of $\lic_2(\pic,\qic))$ by $\gic$ consists of a system of 5 torsors under the groups deduced from $\gic$ by base change over the bases $\pic \times \qic \times \qic,~\pic \times \qic \times \qic \times \qic,~ \pic \times \pic \times \qic,~
\pic \times \pic \times \pic \times \qic,~\pic \times \pic \times \qic \times \qic.$
\end{itemize}
By these considerations an object $(\eic,I)$ of $\Psi_{\lic.(\pic,\qic)}(\gic)$ consists of

(1) an extension $\eic$ of ${\ZZ}[\pic \times \qic] $ by $\gic$, i.e. 
   a $\gic$-torsor $\eic$ over $\pic \times \qic$. Since ${\Ext}^1({\ZZ}[\mathbf{1}\times \mathbf{1}],\gic)=0$, it exists
 two trivializations $T^\pic$ and $T^\qic$ of the pull-back $(\mathbf{1}\times \mathbf{1})^*\eic$ of the $\gic$-torsor $\eic$ via the additive functor $ \mathbf{1} \times \mathbf{1} \rightarrow \pic \times \qic$;
  
 (2) a trivialization $I$ of the extension $( id_\pic \times D_0^\qic + D_0^\pic \times id_\qic )^*\eic $ of 
  ${\ZZ}[\pic \times \qic \times \qic]+ {\ZZ}[\pic \times \pic \times \qic]$ by $\gic$ obtained as pull-back of $\eic$ via 
$$(id_\pic \times D_0^\qic + D_0^\pic \times id_\qic ): {\ZZ}[\pic \times \qic \times \qic]+  {\ZZ}[\pic \times \pic \times \qic]
\longrightarrow  {\ZZ}[\pic \times \qic ],$$
 i.e. a couple of trivializations of the couple of $\gic$-torsors over $\pic \times \qic \times \qic$ and $\pic \times \pic \times \qic$ which are the pull-back of $\eic$ via $(id_\pic \times D_0^\qic + D_0^\pic \times id_\qic )$. 
 These trivializations can be interpreted as a morphism of $\gic_\pic$-torsors over $\qic_\pic \times \qic_\pic $ 
and a morphism of $\gic_\qic$-torsors over $\pic_\qic \times \pic_\qic$
 $$M^\qic: p_1^{\qic \;*} \; \bic_\pic \wedge p_2^{\qic \;*} \; \bic_\pic \longrightarrow +^{\qic \;*} \; \bic_\pic ,\quad 
 \; \; M^\pic: p_1^{\pic \;*} \; \bic_\qic \wedge p_2^{\pic \;*} \; \bic_\qic \longrightarrow +^{\pic \;*} \; \bic_\qic
$$ 
where $p_i^{\qic}: \qic_\pic \times \qic_\pic \rightarrow \qic_\pic $, $p_i^{\pic}: \pic_\qic \times \pic_\qic \rightarrow \pic_\qic $ 
are the projections ($i=1,2$) and $+^{\qic}:\qic_\pic \times \qic_\pic  \rightarrow \qic_\pic $, $+^{\pic}:\pic_\qic \times \pic_\qic  \rightarrow \pic_\qic $
are the group laws of $ \qic_\pic $ and of $ \pic_\qic $ respectively. Remark that the restriction of $M^\pic$ over $\mathbf{1} \times \mathbf{1}$ is compatible with the trivialization $T^\pic$ (idem for $M^\qic$).\\
Finally, the compatibility of $I$ with the relation 
 $$\big( id_\pic \times D_0^\qic + D_0^\pic \times id_\qic \big) \circ \big(id_\pic \times D_1^\qic + D_1^\pic \times id_\qic   +(D_0^\pic \times id_{\qic \times \qic}, id_{\pic \times \pic} \times D_0^\qic ) \big) =0$$
 imposes on the datum $(\eic,T^\pic,T^\qic,M^\pic,M^\qic)$ 5 relations of compatibility
       through the system of 5 torsors over $\pic \times \qic \times \qic,~\pic \times \qic \times \qic \times \qic,~ \pic \times \pic \times \qic,~\pic \times \pic \times \pic \times \qic,~\pic \times \pic \times \qic \times \qic$ arising from $\lic_2(\pic,\qic):$ 
   \begin{itemize}
         \item the exact sequence~(\ref{exact1}) furnishes two relations through the two torsors over $\pic \times \qic \times \qic$ and $\pic \times \qic \times \qic \times \qic$. These two relations are the isomorphism $\alpha^\qic$ of morphisms of $\gic$-torsors over $\pic \times \qic \times \qic \times \qic$ described in (\ref{eq:alpha}) and the isomorphism $\chi^\qic$ of morphisms of $\gic$-torsors over $\pic \times \qic \times \qic $ described in (\ref{eq:chi}), which satisfy the equalities (\ref{ext-tor:1}) and (\ref{ext-tor:2}). Moreover, the restriction of $\alpha^\qic$ over $\mathbf{1} \times \mathbf{1} \times \mathbf{1} $ is the identity and since we are dealing with extensions of strictly commutative Picard stacks, the pull-back $D^* \chi^\qic$ of $\chi^\qic$ via the diagonal morphism is the identity and the composite of $\chi^\qic$ with itself is the identity. Hence by Theorem \ref{thm:ext-tor} the $\gic$-torsor $\eic$ is endowed with a structure of extension of $(\qic)_{\pic}$ by $(\gic)_{\pic}$;
          \item the exact sequence~(\ref{exact2}) expresses two relations through the two torsors over $\pic \times \pic \times \qic$ and $\pic \times \pic \times \pic \times \qic$. These two relations are the isomorphism $\alpha^\pic$ of morphisms of $\gic$-torsors over $\pic \times \pic \times \pic \times \qic$ described in (\ref{eq:alpha}) and the isomorphism $\chi^\pic$ of morphisms of $\gic$-torsors over $\pic \times \pic \times \qic $ described in (\ref{eq:chi}), which satisfy the equalities (\ref{ext-tor:1}) and (\ref{ext-tor:2}). Moreover, the restriction of $\alpha^\pic$ over $\mathbf{1} \times \mathbf{1} \times \mathbf{1} $ is the identity and since we are dealing with extensions of strictly commutative Picard stacks, the pull-back $D^* \chi^\pic$ of $\chi^\pic$ via the diagonal morphism is the identity and the composite of $\chi^\pic$ with itself is the identity. Hence by Theorem \ref{thm:ext-tor} the $\gic$-torsor $\eic$ is endowed with a structure of 
           extension of $(\pic)_{\qic}$ by $(\gic)_{\qic}$;
          \item the anticommutative diagram~(\ref{anti1}) furnishes a relations through the torsor over $\pic \times \pic \times \qic \times \qic$. This relation is the isomorphism $\beta$ of morphisms of $\gic_{\pic \times \qic}$-torsors over $(\pic \times \qic) \times (\pic \times \qic)$ described in (\ref{eq:compbiextpic}). This means that the two structures 
           of extension of $(\qic)_{\pic}$ by $(\gic)_{\pic}$ and of  extension of $(\pic)_{\qic}$ by $(\gic)_{\qic}$ that we have on the $\gic$-torsor $\eic$ are compatible.
          \end{itemize}
The object $(\eic,T^\pic,T^\qic,M^\pic,M^\qic,\alpha^\pic,\alpha^\qic,\chi^\pic,\chi^\qic,\beta)$ of $\Psi_{\lic.(\pic,\qic)}(\gic)$ is therefore a biextension of $(\pic,\qic)$ by $\gic$.
We can then conclude that the 2-category $\Psi_{\lic.(\pic,\qic)}(\gic)$ is equivalent to the 2-category ${\cBiext}(\pic,\qic,\gic)$.
\end{proof}

\section{Proof of Theorem 0.1 (\textbf{a})}

Let $\pic,\qic$ and $\gic$ be three strictly commutative Picard $\bS$-stacks.

Denote respectively by $\lic.(\pic)$ and $\lic.(\qic)$ the canonical flat partial resolutions of $\pic$ and $\qic$ introduced in \S 9. According to Proposition~\ref{resolpart}, there exists arbitrary flat resolutions 
$\lic.'(\pic), \lic.'(\qic)$ of $\pic$ and $\qic$ such that we have the following isomorphisms for $j=-1,0,1$
$${\Tot}( \lic.(\pic))_j  \cong {\Tot}( \lic.'(\pic))_j \qquad {\Tot}( \lic.(\qic))_j  \cong {\Tot}( \lic.'(\qic))_j.$$
Hence it exists two canonical morphisms of complexes
$$ \lic.(\pic) \longrightarrow \lic.'(\pic)  \qquad
\lic.(\qic) \longrightarrow \lic.'(\qic)$$
inducing a canonical morphism between the corresponding total complexes
$${\Tot}([\lic.(\pic) \otimes \lic.(\qic)]) \longrightarrow {\Tot} ([\lic.'(\pic) \otimes \lic.'(\qic)]) $$
which is an isomorphism in degrees -1, 0 and 1.
Denote by
 $\lic.(\pic, \qic)$ (resp. $\lic.'(\pic, \qic)$) the complex $\lic.(\pic) \otimes \lic.(\qic)$
 (resp. $\lic.'(\pic) \otimes \lic.'(\qic)$).
 Remark that ${\Tot}([\lic.'(\pic, \qic)])$ represents $[\pic] {\buildrel {\scriptscriptstyle \LL} \over \otimes}[\qic]$ in the derived category $\cD(\bS)$:
 \[{\Tot}([\lic.'(\pic, \qic)]) = [\pic] {\buildrel {\scriptscriptstyle \LL} \over \otimes} [\qic].\]
 By Proposition~\ref{prop:psi-equiv} we have the equivalence of categories
$$\Psi_{\lic.(\pic,\qic)}(\gic) \cong \Psi_{\lic.'(\pic,\qic)}(\gic).$$
Hence applying Theorem~\ref{thm:psi-geom}, which furnishes the following geometrical description of the category $\Psi_{\lic.(\pic,\qic)}(\gic)$:
$$\Psi_{\lic.(\pic,\qic)}(\gic)  \cong {\bBiext}(\pic,\qic;\gic), $$
and applying Theorem~\ref{thm:psi-ext}, which furnishes the following homological description of the groups $\Psi_{\lic.'(\pic,\qic)}^i(\gic)$ for $i=-1,0,1$:
$$\Psi_{\lic.'(\pic,\qic)}^i(\gic)\cong  {\Ext}^i\big({\Tot}\big([\lic.'(\pic,\qic)]\big),[\gic]\big)\cong {\Ext}^i([\pic] {\buildrel {\scriptscriptstyle \LL} \over \otimes} [\qic],[\gic]),$$
 we get Theorem~\ref{intro:mainthm}, i.e.
${\Biext}^i(\pic,\qic;\gic) \cong {\Ext}^i([\pic]{\buildrel {\scriptscriptstyle \LL} \over \otimes} [\qic],[\gic])$ for $i=-1,0,1. $


\end{document}